\documentclass[twoside,a4paper,reqno,11pt]{amsart}
\usepackage{amsfonts, amsbsy, amsmath, amsthm, amssymb, latexsym, verbatim, enumerate}
\usepackage{mathrsfs}

\usepackage[top=30mm,right=30mm,bottom=30mm,left=30mm]{geometry}

\usepackage{bm}
\usepackage[pdftex]{color,graphicx}

\makeatletter
\newcommand{\imod}[1]{\allowbreak\mkern4mu({\operator@font mod}\,\,#1)}
\makeatother

\headheight=14pt

\parskip 1mm

\renewcommand{\a}{\alpha}

 \newcommand{\e}{\epsilon}
\renewcommand{\bf}{\textbf} 
\renewcommand{\l}{\lambda} 
 
 \renewcommand{\to}{\rightarrow}

\newcommand{\leqs}{\leqslant}
\newcommand{\geqs}{\geqslant}

 \newcommand{\vs}{\vspace{3mm}}
\newcommand{\la}{\langle}
\newcommand{\ra}{\rangle}

\newcommand{\Z}{\mathbb{Z}}

\newtheorem{theorem}{Theorem}

\newtheorem{corol}{Corollary}

\newtheorem{thm}{Theorem}[section]
\newtheorem{prop}[thm]{Proposition}
\newtheorem{lem}[thm]{Lemma}
\newtheorem{cor}[thm]{Corollary}

\theoremstyle{definition}
\newtheorem{ex}[thm]{Example}

\newtheorem{remk}[thm]{Remark}

\newtheorem{remark}{Remark}

\begin{document}

\author{Timothy C. Burness}
 \address{T.C. Burness, School of Mathematics, University of Bristol, Bristol BS8 1TW, UK}
 \email{t.burness@bristol.ac.uk}

\author{Donna M. Testerman}
\address{D.M. Testerman, Institute of Mathematics, Station 8, \'{E}cole Polytechnique F\'{e}d\'{e}rale de Lausanne, CH-1015 Lausanne, Switzerland}
\email{donna.testerman@epfl.ch}

\title[$A_1$-type subgroups containing regular unipotent elements]{$A_1$-type subgroups containing regular \\ unipotent elements}

\subjclass[2010]{Primary 20G41; Secondary 20E32, 20E07}

\begin{abstract}
Let $G$ be a simple exceptional algebraic group of adjoint type over an algebraically closed field of characteristic $p>0$ and let $X = {\rm PSL}_{2}(p)$ be a subgroup of $G$ containing a regular unipotent element $x$ of $G$. By a theorem of Testerman, $x$ is contained in a connected subgroup of $G$ of type $A_1$. In this paper we prove that with two exceptions, $X$ itself is contained in such a subgroup (the exceptions arise when $(G,p) = (E_6,13)$ or $(E_7,19)$). This extends earlier work of Seitz and Testerman, who established the containment under some additional conditions on $p$ and the embedding of $X$ in $G$. We discuss applications of our main result to the study of the subgroup structure of finite groups of Lie type.  
\end{abstract}

\date{\today}
\maketitle

\section{Introduction}\label{s:intro}

Let $G$ be a simple algebraic group of adjoint type over an algebraically closed field $K$ of characteristic $p>0$. Let $X = {\rm PSL}_{2}(q)$ be a subgroup of $G$, where $q \geqs 4$ is a $p$-power, and let $x \in X$ be an element of order $p$. By the main theorem of \cite{PST}, $x$ is contained in a closed connected subgroup of $G$ of type $A_1$, unless $G=G_2$, $p=3$ and $x$ belongs to the conjugacy class of $G$ labelled $A_1^{(3)}$ as in \cite{Lawther}. With a view towards applications to the study of the subgroup structure of finite groups of Lie type, it is desirable to seek natural extensions of this result. In particular, under what conditions can one embed the full subgroup $X$ in an $A_1$ subgroup of $G$?

As a special case of the main theorem of \cite{ST1}, this question has a positive answer when $G$ is classical and $X$ is not contained in a proper parabolic subgroup of $G$ (for $G={\rm SL}_{n}(K)$, this is a well-known theorem of Steinberg \cite{St}). One can see that the condition on the embedding of $X$ is necessary 
by considering indecomposable representations of $X$ which do not
arise as restrictions of indecomposable representations of an algebraic $A_1$. In \cite{ST1}, Seitz and Testerman also provide a positive answer if $G$ is a simple exceptional algebraic group (of type $G_2$, $F_4$, $E_6$, $E_7$ or $E_8$) and $p$ is large enough, still under the same assumption that $X$ is not contained in a proper parabolic subgroup of $G$. More precisely, the approach in \cite{ST1} requires $p > N(G)$ where 
\begin{equation}\label{e:Ndef}
N(G_2) = 19,\; N(F_4) = 43,\; N(E_6) = 43,\; N(E_7) = 67, \; N(E_8) = 113.
\end{equation}
More general results on the embedding of finite quasisimple subgroups in exceptional algebraic groups are established by Liebeck and Seitz in \cite{LS98}. For instance, if $X = {\rm PSL}_{2}(q)$ and $q$ is sufficiently large, then \cite[Theorem 1]{LS98} implies that $X$ is contained in a proper closed positive dimensional subgroup of $G$. Here ``sufficiently large" means that $q>t(G)\cdot (2,p-1)$ with  
\begin{equation}\label{e:tdef}
t(G_2)=12, \; t(F_4) = 68, \; t(E_6)=124, \; t(E_7) = 388,\; t(E_8) = 1312.
\end{equation}

It is natural to seek an extension of \cite[Theorem 2]{ST1} by removing the conditions on $p$ and the embedding of $X$ in $G$ when $G$ is of exceptional type and $X = {\rm PSL}_{2}(q)$. In \cite{ST2}, Seitz and Testerman study the case where $x \in X$ is \emph{semiregular} in $G$ (that is, $C_G(x)$ is a unipotent group). Notice that if $x$ is \emph{not} semiregular then $x \in C_G(s)^{0}$ for some non-trivial semisimple element $s \in G$ and one can hope to answer the question in the proper reductive subgroup $C_G(s)^{0}$; so the semiregular case, where such a reduction is not possible, is particularly interesting. In this situation, the main result of \cite{ST2} states that $X$ is contained in a connected subgroup of type $A_1$ if either $q>p$, or if $q=p$ and ${\rm PGL}_{2}(q) \leqs N_G(X)$.  

In this paper, we extend the results in \cite{ST2} by studying the remaining case where $X = {\rm PSL}_{2}(p)$ and ${\rm PGL}_{2}(p) \not\leqs N_G(X)$. In order to do this, we will assume $x \in X$ is \emph{regular} in $G$, which means that $C_G(x)$ is an abelian unipotent group of dimension $r$, where $r$ is the rank of $G$ (equivalently, $x$ is contained in a unique Borel subgroup of $G$). It is well known that regular unipotent elements exist in all characteristics and they form a single conjugacy class. Since the order of $x$ is the smallest power of $p$ greater than the height of the highest root of $G$ (see \cite[Order Formula 0.4]{T}), our hypothesis implies that $p \geqs h$, where $h$ is the Coxeter number of $G$. (Recall that $h = \frac{1}{r}\dim G - 1 = {\rm ht}(\a_0)+1$, where ${\rm ht}(\a_0)$ is the height of the highest root of $G$.) 

Our main result is the following (in this paper, an \emph{$A_1$-type subgroup} is a closed connected subgroup isomorphic to ${\rm SL}_{2}(K)$ or ${\rm PSL}_{2}(K)$).

\begin{theorem}\label{t:main}
Let $G$ be a simple exceptional algebraic group of adjoint type over an algebraically closed field of characteristic $p >0$. Let $X = {\rm PSL}_{2}(p)$ be a subgroup of $G$ containing a regular unipotent element of $G$. Then exactly one of the following holds:
\begin{itemize}\addtolength{\itemsep}{0.2\baselineskip}
\item[{\rm (i)}] $X$ is contained in an $A_1$-type subgroup of $G$;
\item[{\rm (ii)}] $G=E_6$, $p=13$ and $X$ is contained in a $D_5$-parabolic subgroup of $G$;
\item[{\rm (iii)}] $G=E_7$, $p=19$ and $X$ is contained in an $E_6$-parabolic subgroup of $G$. 
\end{itemize}
In all three cases, $X$ is uniquely determined up to $G$-conjugacy.
\end{theorem}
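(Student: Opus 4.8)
The plan is to first trap the Borel subgroup of $X$ inside a principal $A_1$-subgroup, and then to control the single extra generator needed to recover all of $X$. Write $B_X = N_X(\langle x\rangle) = \langle x\rangle \rtimes \langle t\rangle$ for the Borel subgroup of $X$, where $\langle t\rangle$ is cyclic of order $(p-1)/2$. Since $x$ is regular unipotent it lies in a unique Borel subgroup $B = TU$ of $G$, and as $t$ normalises $\langle x\rangle$ it normalises $B$; being semisimple, $t$ is then conjugate into the maximal torus $T$. I would now exploit regularity directly: writing $x = \prod_{\alpha\in\Delta} x_\alpha(c_\alpha)\cdot u$ with every $c_\alpha \neq 0$ and $u \in [U,U]$, the relation $txt^{-1} = x^{j}$ forces $\alpha(t) = j$ for all simple roots $\alpha$. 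As $G$ is of adjoint type, $X^{*}(T)$ is the root lattice, so this pins down $t = \tau(j)$, where $\tau = \sum_i \varpi_i^\vee$ is the principal cocharacter and $j$ generates the squares in $\mathbb{F}_p^{\times}$. Because $p \geqs h$, the element $x$ together with $\tau$ determines a \emph{principal} $A_1$-subgroup $Y$ with $x\in Y$ (such a $Y$ exists by \cite{PST}, and is principal since $x$ is regular) and maximal torus $\tau(K^{\times})$; one then reads off $B_X \leqs Y$.

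It remains to place all of $X = \langle B_X,\sigma\rangle$, where $\sigma$ inverts $\langle t\rangle$, inside an $A_1$-type subgroup, and here I would split on complete reducibility. If $X$ is $G$-irreducible I aim to force $X \leqs Y$: writing $\sigma = n_Y s$ with $n_Y$ the Weyl element of $Y$ and $s \in T$, the contraction $C_G(\langle t\rangle) = T$ (valid once $(p-1)/2$ divides no root height) together with $G$-complete reducibility of $X$ should confine $s$ to $\tau(K^{\times}) = Y \cap T$, whence $X \leqs Y$. If instead $X$ lies in a proper parabolic subgroup $P$, I would pass to the Levi quotient $L = P/Q$: the image $\bar X$ again contains a regular unipotent element of $L$ and is $L$-irreducible, so by induction on the rank (using Steinberg's theorem \cite{St} and \cite{ST1} for classical factors) $\bar X$ lies in an $A_1$-subgroup of $L$. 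The problem then becomes whether the complement $X$ to $Q$ in its preimage may be chosen inside an $A_1$-subgroup of $G$; complements are parametrised up to conjugacy by $H^{1}(\bar X, Q)$, and $X$ avoids every $A_1$ precisely when a non-split complement is forced.

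The main obstacle is the finite range of small primes $h \leqs p \leqs N(G)$ --- the large primes being handled by \cite{ST1} in the $G$-irreducible case and by the classification of positive-dimensional overgroups in \cite{LS98}. In this range the representation theory of the principal $A_1$ is genuinely non-semisimple: one has $\g|_Y = \bigoplus_i V(2m_i)$ with top weight $2m_r = 2h-2 > p-1$, so the highest Weyl modules become reducible on restriction to $X = {\rm PSL}_2(p)$, and it is exactly the resulting non-vanishing of $1$-cohomology that allows embeddings of $X$ lying in no $A_1$. I would treat each $G$ and each admissible $p$ in turn, restricting $\g$ (or the $27$- and $56$-dimensional modules for $E_6$ and $E_7$) to $X$ and matching against the possible $A_1$-restrictions; this is where the computational weight sits, and it should leave exactly the configurations $(E_6,13)$ and $(E_7,19)$, realised as non-split complements inside the $D_5$- and $E_6$-parabolic subgroups (whose unipotent radicals are abelian of dimensions $16$ and $27$). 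Finally, to obtain the stated dichotomy I would verify that these two subgroups lie in no $A_1$-type subgroup --- using that any $A_1$ through the regular element $x$ is principal, so a non-split complement cannot be absorbed --- and I would deduce uniqueness up to $G$-conjugacy from the conjugacy of principal $A_1$-subgroups together with a reduction of the relevant $H^{1}$-classes to a single orbit under $C_G(\bar X)$.
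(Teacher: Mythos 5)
Your opening step --- conjugating so that the Borel subgroup $\la x,t\ra$ of $X$ lies in the principal $A_1$-subgroup $Y$ --- is in substance the paper's Lemma \ref{l:key}, although you should be careful that the conjugation taking $t$ into $T$ must be performed by an element of $C_G(x)$ (the paper does this via the exponentiation map of Proposition \ref{l:exp1}, and a small argument is needed when $\xi^{d_i}=1$ for some $i$). The genuine gap is in your $G$-irreducible branch, which is where the whole difficulty of the theorem lives. Your mechanism for forcing $\s\in Y$ rests on $C_G(\la t\ra)=T$, and your own caveat ``valid once $(p-1)/2$ divides no root height'' fails in precisely the critical cases: $t=\tau(j)$ with $j$ of order $(p-1)/2$, so any root of height $(p-1)/2$ gives a root subgroup centralizing $t$, and such roots exist whenever $p\leqs 2h-1$ --- that is, for $(G_2,7)$, $(F_4,13)$, $(E_6,13)$, $(E_7,19)$ and $(E_8,p)$ with $31\leqs p\leqs 53$, which are exactly the hardest configurations (including the two exceptional ones). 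Even when $t$ is regular semisimple, writing $\s=n_Ys$ with $s\in T$ does not confine $s$ to $\tau(K^{\times})$: ``$G$-complete reducibility of $X$'' gives no such constraint, and if it did, the elaborate hypotheses of \cite{ST1,ST2} would have been unnecessary. The paper's actual engine is entirely different and is not present in your proposal: after matching $V|_X$ against Table \ref{t:dec} (your elimination step, which is fine in outline but only shows \emph{compatibility} with an $A_1$), it proves that the $3$-dimensional summand $L_X(2)$ of ${\rm soc}(V|_X)$ is an $\mathfrak{sl}_2$-subalgebra of $V={\rm Lie}(G)$ --- by explicit Chevalley-basis computations, using the action of $x$ and the fact that $C_V(x)$ and $C_V(x')$ are abelian subalgebras --- and then identifies the $G$-stabilizer of that subalgebra as an $A_1$-type subgroup via Saxl--Seitz (Propositions \ref{p:new1} and \ref{p:new2}). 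Without some such construction your argument never actually produces an $A_1$ containing all of $X$.

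The parabolic branch of your proposal is closer in spirit to how the exceptional examples arise (Remark \ref{r:main}(b)), but as written it is also incomplete: you would need to determine, for every proper parabolic $P=QL$ of every $G$ and every admissible $p$, when $H^1(\bar X,Q)\neq 0$, which non-split complements contain a \emph{regular} unipotent element of $G$ rather than merely of $L$, and which of those avoid every $A_1$-type subgroup. The paper instead restricts the possible pairs $(G,L')$ by comparing the $KX$-composition factors of $V|_X$ (already pinned down to the short list in Table \ref{tab:red}) with the composition factors of $V$ restricted to an $A_1$ of $L'$ read off from \cite{LT99}; this is what isolates $(E_6,13,D_5)$ and $(E_7,19,E_6)$. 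Your uniqueness sketch for case (i) also needs the conjugacy of ${\rm PSL}_2(p)$-subgroups \emph{inside} a fixed $A_1$ (the paper uses \cite[Theorem 5.1]{LS94} and Lang's theorem), not just conjugacy of the principal $A_1$-subgroups themselves.
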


\begin{remark}\label{r:main}
Let us make some comments on the statement of Theorem \ref{t:main}. 
\begin{itemize}\addtolength{\itemsep}{0.2\baselineskip}
\item[{\rm (a)}] To see the uniqueness of $X$ in part (i), it suffices to show that  every subgroup $Y = {\rm PSL}_{2}(p)$ of $G$ containing $x$ is conjugate to $X$. Write $X<A$ and $Y<B$, where $A$ and $B$ are $A_1$-type subgroups of $G$. By Proposition \ref{p:0}(ii), $A$ and $B$ are $G$-conjugate, say $A = B^g$, so $X,Y^g < A$. Finally, by applying \cite[Theorem 5.1]{LS94} and Lang's theorem, we deduce that $X$ and $Y^g$ are $A$-conjugate.
\item[{\rm (b)}] The interesting examples arising in (ii) and (iii) were found by Craven \cite{Craven} in his recent study of the maximal subgroups with socle ${\rm PSL}_{2}(q)$ in finite exceptional groups of Lie type. The action of such a subgroup $X$ on the adjoint module ${\rm Lie}(G)$ is described in Theorem \ref{t:par} (see Section \ref{s:final}) and its construction is explained in  \cite[Section 9]{Craven}. Let us say a few words on the construction in (ii), where $G = E_6$ and $p=13$. Let $P=QL$ be a $D_5$-parabolic subgroup of $G$ and identify the unipotent radical $Q$ with a $16$-dimensional spin module for $L'=D_5$. Take a subgroup $Y = {\rm PSL}_{2}(p)<L'$ containing a regular unipotent element of $L'$ 
and consider the semidirect product $QY<P$ (note that $Y$ is uniquely determined up to $L'$-conjugacy). Now one checks that $Q|_{Y}$ has an $11$-dimensional composition factor $W$ with $\dim H^1(Y,W) =1$, which is a direct summand of $Q$. It follows that there is a complement $X = {\rm PSL}_{2}(p)$ to $Q$ in $QY$ that is not $QY$-conjugate to $Y$. Moreover, one can show that $X$ contains a regular unipotent element of $G$ and there is a unique $P$-class of such subgroups $X$ (hence $X$ is uniquely determined up to $G$-conjugacy). We will show that the subgroup $X$ constructed in this way is not contained in an $A_1$-type subgroup of $G$ (this follows from  Theorem \ref{t:main0} below). A similar construction can be given in (iii) and again one can show that such a subgroup is both unique up to conjugacy and is not contained in an $A_1$-type subgroup.  
\item[{\rm (c)}] The conclusion of Theorem \ref{t:main} for $G=G_2$ can be deduced from the proof of \cite[Lemma 3.1]{ST2}. It also follows from Kleidman's classification of the maximal subgroups of $G_2(p)$ in \cite{K}. However, for completeness we will provide an alternative proof, following the same approach we use for the other exceptional groups.
\item[{\rm (d)}] Finally, let us comment on the adjoint hypothesis in the statement of the theorem. Let $G$ be a simple exceptional algebraic group and let $G_{{\rm ad}}$ be the corresponding adjoint group. Suppose $Y = {\rm PSL}_{2}(p)$ or ${\rm SL}_{2}(p)$ is a subgroup of $G$ containing a regular unipotent element $y$ of $G$. The regularity of $y$ implies that $Z(Y) \leqs Z(G)$ and thus $YZ(G)/Z(G) = {\rm PSL}_{2}(p)$ is a subgroup of $G_{{\rm ad}}$ containing a regular unipotent element, so it is determined by Theorem \ref{t:main}.  
\end{itemize}
\end{remark}

The next result shows that the subgroups $X$ in part (i) of Theorem \ref{t:main} are \emph{$G$-irreducible} in the sense of Serre (that is, $X$ is not contained in a proper parabolic subgroup of $G$). The proof is given at the end of Section \ref{s:prel}. By \cite[Theorem 1.2]{TZ}, any connected reductive subgroup of a reductive algebraic group $G$ containing a regular unipotent element is $G$-irreducible, so we can view Theorem \ref{t:main0} as a partial analogue for subgroups isomorphic to ${\rm PSL}_{2}(p)$ in simple exceptional groups.

\begin{theorem}\label{t:main0}
Let $G$ be a simple exceptional algebraic group of adjoint type over an algebraically closed field of characteristic $p >0$ and let $x \in G$ be a regular unipotent element such that  
$$x \in X = {\rm PSL}_{2}(p)<A<G,$$ 
where $A$ is an $A_1$-type subgroup of $G$. Then $X$ is $G$-irreducible.
\end{theorem}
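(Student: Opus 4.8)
The plan is to argue by contradiction, exploiting the fact that $A$ is itself $G$-irreducible and showing that any proper parabolic subgroup containing $X$ is forced to contain all of $A$. Since $A$ is a connected reductive subgroup of $G$ (being of type $A_1$) containing the regular unipotent element $x$, the result \cite[Theorem 1.2]{TZ} gives at once that $A$ is $G$-irreducible. It therefore suffices to derive a contradiction from the assumption that $X \leq P$ for some proper parabolic subgroup $P$ of $G$.

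The key step is to use the regularity of $x$ to pull a one-dimensional unipotent subgroup of $A$ into $P$. As $x$ is regular unipotent, it lies in a unique Borel subgroup $B$ of $G$. Since $x \in P$ and every unipotent element of the connected group $P$ lies in a Borel subgroup of $P$ --- which is necessarily a Borel subgroup of $G$ contained in $P$ --- the uniqueness of $B$ forces $B \leq P$. Now $x$ is also a (nontrivial, hence regular) unipotent element of $A \cong {\rm SL}_{2}(K)$ or ${\rm PSL}_{2}(K)$, so it lies in a unique Borel subgroup $B_A$ of $A$; let $U_A = R_u(B_A) \cong \mathbb{G}_a$ be the corresponding one-dimensional unipotent subgroup, so that $x \in U_A$. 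Being a connected unipotent subgroup of $G$ containing $x$, $U_A$ lies in some Borel subgroup of $G$, which must be $B$ by uniqueness; hence $U_A \leq B \leq P$.

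It then remains to observe that $X$ together with $U_A$ generates all of $A$. Because $p \geq h \geq 6$, we have $p \geq 7$, so $X = {\rm PSL}_{2}(p)$ is nonabelian simple and is not contained in the solvable group $B_A$; choose $g \in X \setminus B_A$. As $B_A = N_A(U_A)$, the conjugate $gU_Ag^{-1}$ is a one-dimensional unipotent subgroup of $A$ distinct from $U_A$. Two distinct one-dimensional unipotent subgroups of an $A_1$-type group generate the whole group: they cannot both lie in the unipotent radical of a single Borel subgroup of $A$, so the connected subgroup they generate is not contained in any Borel of $A$ and must equal $A$. Thus $A = \langle U_A, gU_Ag^{-1}\rangle \leq \langle X, U_A\rangle \leq P$, contradicting the $G$-irreducibility of $A$. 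Hence no such $P$ exists and $X$ is $G$-irreducible.

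The crux of the argument is the passage in the second paragraph: one must use the regularity of $x$ in an essential way to conclude that the ambient parabolic $P$ contains not merely the element $x$ but the entire root subgroup $U_A$ of $A$ through $x$. Once this containment is secured, the generation statement for $A$ is elementary ${\rm SL}_{2}$-geometry, and the contradiction with \cite[Theorem 1.2]{TZ} is immediate; the main work is thus concentrated in the regularity-forces-the-Borel step rather than in any extended computation.
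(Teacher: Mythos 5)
Your proof is correct, and it takes a genuinely different and substantially more elementary route than the paper's. The paper argues via the Levi quotient: it takes $P$ minimal subject to containing $X$, shows that the image of $X$ in $L'$ lies in an $A_1$-type subgroup $H$ of $L'$ (invoking \cite{ST1} for classical factors and the Reduction Theorem for exceptional ones), and then derives a contradiction by comparing the $KX$-composition factors of ${\rm Lie}(G)$ forced by the possible Levi subgroups (read off from the tables in \cite{LT99}) with the decomposition of ${\rm Lie}(G)|_{X}$ recorded in Table \ref{t:dec}; that argument is therefore logically dependent on Theorem \ref{t:red} and on a case-by-case analysis of composition factors. Your argument bypasses all of this, using only the $G$-irreducibility of $A$ (from \cite[Theorem 1.2]{TZ}), the fact that a regular unipotent element lies in a unique Borel subgroup $B$ --- so that any parabolic containing $x$ contains $B$, and hence contains the root subgroup $U_A$ of $A$ through $x$, since $U_A$ is connected unipotent and so lies in a Borel subgroup, necessarily $B$ --- together with the elementary observation that $\langle U_A, U_A^g\rangle = A$ for any $g \in X \setminus N_A(U_A)$, which exists because $X$ is nonabelian simple and $N_A(U_A)=B_A$ is solvable. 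Each step checks out: Borel subgroups of a parabolic are Borel subgroups of $G$, every proper closed connected subgroup of an $A_1$-type group is solvable and hence lies in a Borel subgroup whose unipotent radical is one-dimensional, so two distinct one-dimensional unipotent subgroups do generate $A$. What your approach buys is independence from the computational core of the paper and a statement that holds verbatim for any subgroup of $A$ containing $x$ and not contained in $B_A$; the paper's approach has the incidental advantage that the same composition-factor machinery is reused immediately afterwards in the proof of Theorem \ref{t:par}.
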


\begin{remark}\label{r:main00}
As in Theorem \ref{t:main}, let $X = {\rm PSL}_{2}(p)$ be a subgroup of $G$ containing a regular unipotent element. By combining Theorems \ref{t:main} and \ref{t:main0}, we deduce that $X$ is contained in an $A_1$-type subgroup of $G$ if and only if $X$ is not contained in a proper parabolic subgroup of $G$. In particular, the examples arising in parts (ii) and (iii) of Theorem \ref{t:main} are genuine exceptions to the containment in (i). 
\end{remark}

The next result follows by combining Theorem \ref{t:main} with the main results of \cite{ST1,ST2}.

\begin{corol}\label{c:main}
Let $G$ be a simple algebraic group of adjoint type over an algebraically closed field of characteristic $p>0$ and let $X = {\rm PSL}_{2}(q)$ be a subgroup of $G$ containing a regular unipotent element of $G$, where $q \geqs 4$ is a $p$-power. In addition, if $G$ is classical assume that $X$ is $G$-irreducible. Then either 
\begin{itemize}\addtolength{\itemsep}{0.2\baselineskip}
\item[{\rm (a)}] $X$ is contained in an $A_1$-type subgroup of $G$, or 
\item[{\rm (b)}] $q=p$ and $(G,p,X)$ is one of the cases in parts {\rm (ii)} and {\rm (iii)} in Theorem $\ref{t:main}$.
\end{itemize}
\end{corol}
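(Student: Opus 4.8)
The plan is to derive the corollary by combining Theorem \ref{t:main} with the main theorems of \cite{ST1} and \cite{ST2}, organising the argument as a case analysis: first according to whether $G$ is of classical or exceptional type, and then, in the exceptional case, according to whether $q>p$ or $q=p$. In each branch the task is to verify that the hypotheses of the appropriate cited result are satisfied and then read off conclusion (a), and to pin down the precise situations in which (a) can fail.

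If $G$ is classical, I would invoke the $G$-irreducibility hypothesis: it says exactly that $X$ is not contained in a proper parabolic subgroup of $G$, which is the hypothesis under which the main theorem of \cite{ST1} places $X$ inside an $A_1$-type subgroup (with the regular unipotent $x$ of order $p$ serving as the element of order $p$ required there). Hence (a) holds, with no exceptions and no restriction on $p$. If instead $G$ is exceptional, I would first observe that the regularity of $x$ forces $C_G(x)$ to be an abelian unipotent group, so in particular $x$ is semiregular and the results of \cite{ST2} are available. When $q>p$, the main result of \cite{ST2} yields (a) directly. When $q=p$ we have $X = {\rm PSL}_{2}(p)$, and Theorem \ref{t:main} applies verbatim: it gives either containment in an $A_1$-type subgroup, which is (a), or one of the two configurations $(G,p)=(E_6,13)$ or $(E_7,19)$ described in parts (ii) and (iii). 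Since $13$ and $19$ are prime, these are exactly the cases recorded in (b), each with $q=p$.

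Since the corollary is a consequence of results already in hand, I do not anticipate a substantive obstacle; the only genuine point requiring care is the matching of hypotheses across the three sources. In particular I would verify that regularity implies the semiregularity assumed in \cite{ST2}, and that Theorem \ref{t:main} subsumes the \emph{entire} exceptional $q=p$ case — including the subcase ${\rm PGL}_{2}(p) \leqs N_G(X)$, which \cite{ST2} also treats — because the statement of Theorem \ref{t:main} imposes no condition on $N_G(X)$. With these checks in place the cases are exhaustive, and the only failures of (a) are the type $E_6$ and $E_7$ examples at $q=p$ recorded in (b).
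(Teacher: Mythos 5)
Your proposal is correct and follows exactly the route the paper intends: the paper gives no separate proof of this corollary, stating only that it "follows by combining Theorem \ref{t:main} with the main results of \cite{ST1,ST2}", and your case division (classical via $G$-irreducibility and \cite{ST1}; exceptional with $q>p$ via semiregularity and \cite{ST2}; exceptional with $q=p$ via Theorem \ref{t:main}) is precisely that combination. The hypothesis-matching points you flag --- that regularity implies semiregularity and that Theorem \ref{t:main} covers the $q=p$ case with no condition on $N_G(X)$ --- are the right ones to check and both hold.
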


Next we present some further applications of Theorem \ref{t:main}. Let $G$ be a simple algebraic group as in Theorem \ref{t:main} and recall that a finite subgroup $H$ of $G$ is \emph{Lie primitive} if 
\begin{itemize}\addtolength{\itemsep}{0.2\baselineskip}
\item[{\rm (a)}] $H$ does not contain a subgroup of the form $O^{p'}(G^{F})$, where $F$ is a Steinberg endomorphism of $G$ with fixed point subgroup $G^F$; and
\item[{\rm (b)}] $H$ is not contained in a proper closed subgroup of $G$ of positive dimension. 
\end{itemize}

In \cite[Section 3]{GM}, Guralnick and Malle determine the maximal Lie primitive subgroups $H$ of $G$ containing a regular unipotent element (the maximal closed positive dimensional subgroups of $G$ containing a regular unipotent element were determined in earlier work of Saxl and Seitz \cite{SS}). More precisely, they give a list of possibilities for $H$, but they do not claim that all cases actually occur. In particular, their proof relies on  \cite{ST1} and thus $H = {\rm PSL}_{2}(p)$ arises as a possibility when $G \in \{F_4, E_6,E_7,E_8\}$ and $h \leqs p \leqs N(G)$, where $N(G)$ is the integer in (\ref{e:Ndef}). Therefore, by combining \cite[Theorems 3.3, 3.4]{GM} with Theorem \ref{t:main}, we obtain the following refinement.

\begin{corol}\label{c:gm}
Let $G$ be a simple exceptional algebraic group of adjoint type over an algebraically closed field of characteristic $p >0$. Suppose $H$ is a maximal Lie primitive subgroup of $G$ containing a regular unipotent element. Let $H_0$ denote the socle of $H$.
\begin{itemize}\addtolength{\itemsep}{0.2\baselineskip}
\item[{\rm (i)}] If $G = G_2$, then one of the following holds:
\begin{itemize}\addtolength{\itemsep}{0.2\baselineskip}
\item[{\rm (a)}] $p=2$ and $H = {\rm J}_2$;
\item[{\rm (b)}] $p=7$ and $H = 2^3.{\rm SL}_{3}(2)$, $G_2(2)$ or ${\rm PSL}_{2}(13)$;
\item[{\rm (c)}] $p=11$ and $H={\rm J}_{1}$.
\end{itemize}

\item[{\rm (ii)}] If $G = F_4$, then one of the following holds:
\begin{itemize}\addtolength{\itemsep}{0.2\baselineskip}
\item[{\rm (a)}] $p=2$ and $H_0 = {\rm PSL}_{3}(16)$, ${\rm PSU}_{3}(16)$ or ${\rm PSL}_{2}(17)$;
\item[{\rm (b)}] $p=13$ and $H = 3^3.{\rm SL}_{3}(3)$, or $H_0 = {\rm PSL}_{2}(25)$, ${\rm PSL}_{2}(27)$ or ${}^3D_4(2)$.
\end{itemize}

\item[{\rm (iii)}] If $G = E_6$, then one of the following holds:
\begin{itemize}\addtolength{\itemsep}{0.2\baselineskip}
\item[{\rm (a)}] $p=2$ and $H_0 = {\rm PSL}_{3}(16)$, ${\rm PSU}_{3}(16)$ or ${\rm Fi}_{22}$;
\item[{\rm (b)}] $p=13$ and either $H = 3^{3+3}.{\rm SL}_{3}(3)$ or $H_0 = {}^2F_4(2)'$.
\end{itemize}

\item[{\rm (iv)}] If $G = E_7$, then $p=19$ and $H_0 = {\rm PSU}_{3}(8)$ or ${\rm PSL}_{2}(37)$. 

\item[{\rm (v)}] If $G = E_8$, then one of the following holds:
\begin{itemize}\addtolength{\itemsep}{0.2\baselineskip}
\item[{\rm (a)}] $p=2$ and $H_0 = {\rm PSL}_{2}(31)$; 
\item[{\rm (b)}] $p=7$ and $H_0 = {\rm PSp}_{8}(7)$ or $\Omega_9(7)$; 
\item[{\rm (c)}] $p=31$ and $H = 2^{5+10}.{\rm SL}_{5}(2)$ or $5^3.{\rm SL}_{3}(5)$, or $H_0 = {\rm PSL}_{2}(32)$, ${\rm PSL}_{2}(61)$ or ${\rm PSL}_{3}(5)$.
\end{itemize}
\end{itemize}
\end{corol}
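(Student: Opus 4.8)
The plan is to deduce the corollary from the classification of Guralnick and Malle together with Theorem \ref{t:main}. By \cite[Theorems 3.3, 3.4]{GM}, every maximal Lie primitive subgroup $H$ of $G$ containing a regular unipotent element lies in an explicit list of possibilities, and this list is definitive apart from the entries with socle $H_0 = {\rm PSL}_{2}(p)$ in the defining characteristic. As explained in the discussion preceding the statement, these unresolved entries arise precisely when $G \in \{F_4, E_6, E_7, E_8\}$ and $h \leqs p \leqs N(G)$, where $N(G)$ is the bound in (\ref{e:Ndef}) beyond which \cite{ST1}, and hence the argument in \cite{GM}, applies; the analogous range for $G_2$ is covered via the $G_2$ conclusion of Theorem \ref{t:main} (Remark \ref{r:main}(c)). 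Thus the corollary will follow once I show that none of these defining-characteristic candidates is genuinely Lie primitive, for then deleting them from the Guralnick--Malle list leaves exactly the entries recorded in parts (i)--(v).

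The elimination step is where Theorem \ref{t:main} enters. For a candidate with $H_0 = {\rm PSL}_{2}(p)$, I would first note that the regular unipotent element $x \in H$ has order $p$ and therefore lies in the socle $H_0$, since $H/H_0$ embeds in ${\rm Out}({\rm PSL}_{2}(p)) = C_2$, a group of order coprime to the odd prime $p \geqs h$. Hence $X := H_0$ is a subgroup of $G$ of the form ${\rm PSL}_{2}(p)$ containing a regular unipotent element, and Theorem \ref{t:main} applies verbatim. In every case it places $X$ inside a proper positive-dimensional subgroup $M$ of $G$: an $A_1$-type subgroup in conclusion (i), a $D_5$-parabolic when $(G,p)=(E_6,13)$, and an $E_6$-parabolic when $(G,p)=(E_7,19)$. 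This is exactly the structural information needed to contradict condition (b) in the definition of Lie primitivity.

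The one genuine obstacle is the passage from the containment of the socle to a positive-dimensional overgroup of $H$ itself, since a priori $N_G(H_0)$ could be finite (for a principal $A_1$-type subgroup $A$ one has $C_G(A)$ finite). The way around this is to use that the overgroup $M$ is canonically attached to $X$: in conclusion (i) the $A_1$-type subgroup containing the regular unipotent element $x$ is unique in the present setting, a uniqueness of the kind underlying Remark \ref{r:main}(a), so every element of $N_G(X)$ must normalize it, giving $H \leqs N_G(X) \leqs N_G(A)$ with $N_G(A)$ proper and positive-dimensional. In the two parabolic cases one argues instead with the self-normalizing property of parabolic subgroups, together with the uniqueness up to conjugacy of $X$, to place $H$ inside $M$; as $E_6$ and $E_7$ contribute no further defining-characteristic candidate, these ${\rm PSL}_{2}(p)$ possibilities are discarded as well. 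Establishing this canonical attachment is the crux of the argument; once it is in hand, every open entry of the Guralnick--Malle list is removed, and the surviving possibilities are precisely those stated in parts (i)--(v).
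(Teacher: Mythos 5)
Your proposal follows the paper's own (essentially one-line) derivation: the paper simply deletes the unresolved defining-characteristic entries $H={\rm PSL}_2(p)$, $h\leqs p\leqs N(G)$, from the Guralnick--Malle list by invoking Theorem \ref{t:main}, which places any such subgroup in a proper closed positive-dimensional subgroup (an $A_1$-type subgroup or a parabolic), contradicting condition (b) of Lie primitivity. Your additional discussion of passing from $H_0$ to $H$ goes beyond what the paper records and is not actually needed here, since the candidates in question are $H={\rm PSL}_2(p)$ itself (so Theorem \ref{t:main} applies directly to $H$); be aware, though, that your claim that the $A_1$-type overgroup of the regular unipotent element $x$ is unique is only true up to conjugacy (Proposition \ref{p:0}(ii)) --- in general there is a whole $C_G(x)$-orbit of such overgroups through $x$ --- so the ``canonical attachment'' step would need the uniqueness of the $A_1$ overgroup of $X$ (not of $x$), which is not established in the paper.
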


\begin{remark}
By Corollary \ref{c:gm}, there are no Lie primitive subgroups containing a regular unipotent element if $p>31$. This lower bound is best possible: the case $(G,p)= (E_8,31)$ with $H_0 = {\rm PSL}_{2}(32)$ is a genuine example (this can be deduced from recent work of Litterick \cite{AJL_mem}). However, we are not claiming that all of the possibilities listed in Corollary \ref{c:gm} are Lie primitive and contain regular unipotent elements (indeed, we expect that this list can be reduced further).
\end{remark}

We can also use Theorem \ref{t:main} to shed new light on the subgroup structure of finite exceptional groups of Lie type. Let $G$ be a simple exceptional algebraic group of adjoint type over $\bar{\mathbb{F}}_{p}$ with $p$ prime and let $F:G \to G$ be a Steinberg endomorphism of $G$ with fixed point subgroup $G^F$, an almost simple group over $\mathbb{F}_{q}$. 
The maximal subgroups of the Ree groups ${}^2G_2(q)$ and ${}^2F_4(q)$ (and their automorphism groups) have been determined up to conjugacy by Kleidman \cite{K} and Malle \cite{Mal}, respectively, and similarly $G_2(q)$ is handled in \cite{Coop} for $q$ even and in \cite{K} for $q$ odd. Therefore, we may assume $G^F$ is one of $F_4(q)$, $E_6(q)$, $^2E_6(q)$, $E_7(q)$ and $E_8(q)$.
In these cases, through the work of many authors, the problem of determining the maximal subgroups $H$ of $G^F$ has essentially been reduced to the case where $H$ is an almost simple group of Lie type with socle $H_0$ over a field $\mathbb{F}_{q_0}$ of characteristic $p$ (see \cite[Section 29.1]{MT} and the references therein). Here one of the main problems is to determine if such a subgroup is of the form $M^F$, where $M$ is maximal among positive dimensional $F$-stable closed subgroups of $G$. Significant restrictions on the rank of $H_0$ and the size of $q_0$ are established in \cite{LST, LS98}, but the problem of obtaining a complete classification is still open.   

The case $H_0={\rm PSL}_{2}(q_0)$ is of particular interest. If $q_0 > t(G) \cdot (2,p-1)$, where $t(G)$ is the integer in (\ref{e:tdef}), then the aforementioned work of Liebeck and Seitz \cite{LS98} shows that $q=q_0$ and $H = M^F$ for some maximal connected subgroup $M$ of $G$ of type $A_1$. Further results in this direction have recently been obtained by Craven \cite{Craven} when $G^F$ is one of $F_4(q)$, $E_6(q)$, ${}^2E_6(q)$ or $E_7(q)$. Using the maximality of $H$, he proves that $H = M^F$ in almost every case, but his approach is unable to eliminate certain values of $q_0$. In particular, the case where $H={\rm PSL}_{2}(h+1)$ contains a regular unipotent element of $G$ is problematic (the existence of such subgroups, in a much more general setting, was established by Serre \cite{Serre}, which explains why they are called \emph{Serre embeddings} in \cite{Craven}). Using Theorem \ref{t:main}, one can show that all maximal Serre embeddings are of the form $M^F$ (we can also handle $G=E_8$, which is excluded in \cite{Craven}). In particular, it follows that part (1) in \cite[Theorem 1.2]{Craven} is a subcase of part (2), and similarly part (2) in \cite[Theorem 1.4]{Craven} is a subcase of part (3).

\vs

To conclude the introduction, let us briefly describe the main steps in the proof of Theorem \ref{t:main} (we refer the reader to Section \ref{ss:me} for more details). Suppose $x \in X = {\rm PSL}_{2}(p)<G$ is a regular unipotent element of $G$ and let $A<G$ be an $A_1$-type subgroup containing $x$ with maximal torus $T=\{t(c)  \mid c \in K^{\times}\}$. Set $V = {\rm Lie}(G)$ and $\mathbb{F}_{p}^{\times} = \la \xi\ra$. Without loss of generality, replacing $X$ by a suitable $G$-conjugate, we show that we may assume $X$ contains the toral element $t(\xi) \in T$, which corresponds  to a diagonalizable element $s \in {\rm SL}_{2}(p)$ with eigenvalues $\xi$ and $\xi^{-1}$ (see Lemma \ref{l:key}). We can use the known action of $A$ on $V$ to determine the eigenvectors and eigenspaces of $s$ on $V$ and this severely restricts the possibilities for $V|_{X}$. It is possible to obtain further restrictions on the indecomposable summands of $V|_{X}$ by considering the trace on $V$ of semisimple elements in $X$ of small order (typically, we only need to work with elements of order $2$ and $3$). 

In this way, in almost all cases, we are able to reduce to the situation where $V|_{X}$ is compatible with the action of a ${\rm PSL}_{2}(p)$ subgroup of $A$. In this situation, $V|_{X}$ is given in Table \ref{t:dec} (our notation for indecomposable summands in Table \ref{t:dec} is explained in Section \ref{s:rep}) and we observe that the socle of $V|_{X}$ has a $3$-dimensional simple summand 
$$W = \la w_2, w_0, w_{-2} \ra,$$ 
where $w_i$ is an eigenvector for $s$ with eigenvalue $\xi^i$. Let $E_i$ be the $\xi^i$-eigenspace of $s$ on $V$. Without loss of generality, we may assume that the action of $x$ on $W$ (in terms of this basis) is given by the matrix
$$\left(\begin{array}{ccc}
1 & 1 & 1 \\
0 & 1 & 2 \\
0 & 0 & 1 
\end{array}\right)$$
and thus 
\begin{eqnarray*}
w_2  \!\!\! & \in & \!\!\! \ker(x-1) \cap E_2,\\
w_0  \!\!\! & \in & \!\!\! \left(\ker((x-1)^2) \setminus \ker(x-1)\right) \cap E_0, \\
w_{-2} \!\!\! & \in & \!\!\! \left(\ker((x-1)^3) \setminus \ker((x-1)^2)\right) \cap E_{-2}.
\end{eqnarray*} 
Our main goal is to show that $W$ is an $\mathfrak{sl}_2$-subalgebra of $V$. 

To do this, we may assume that $x$ is obtained by exponentiating the regular nilpotent element $\sum_{\gamma \in \Pi(G)} e_{\gamma} \in V$ with respect to a fixed Chevalley basis 
$$\mathcal{B} = \{e_{\a}, f_{\a}, h_{\gamma} \mid \a \in \Phi^{+}(G), \, \gamma \in \Pi(G) \}$$
for $V$ (see Section \ref{ss:me} for more details). This allows us to explicitly identify a maximal torus of an $A_1$-type subgroup of $G$ containing $x$, which means that we can compute eigenvectors and eigenspaces for $s$ in terms of the Chevalley basis. With the aid of {\sc Magma} \cite{magma} to simplify the computations, we can describe the action of $x$ on $V$ in terms of a $\dim G \times \dim G$ matrix with respect to $\mathcal{B}$ and then compute bases for the subspaces $\ker((x-1)^i)$ for $i \geqs 1$. In this way, we obtain expressions for $w_2, w_0$ and $w_{-2}$ in terms of $\mathcal{B}$, but with undetermined coefficients. We then derive relations between these coefficients by considering the action of $x$ on $W$, and further relations can be found by using the fact that $C_V(x) = \ker(x-1)$ is an abelian subalgebra. Apart from a handful of special cases, this allows us to reduce to the case where $W$ is an 
$\mathfrak{sl}_2$-subalgebra and we complete the argument by showing that the stabilizer of $W$ in $G$ is an $A_1$-type subgroup.

This process of elimination and extension comprises the bulk of the proof of Theorem \ref{t:main} (see Sections \ref{s:g2}--\ref{s:e8}). However, there are a handful of possibilities for $(G,p)$ which require further attention; these are the cases arising in part (ii) of Theorem \ref{t:red} and they are handled in Section \ref{s:final}. In each of these cases, the action of $X$ on $V$ is known (up to one of three possibilities if $(G,p) = (E_6,13)$ or $(E_7,19)$) and $X$ stabilizes a non-zero subalgebra of $\la e_{\a} \mid \a \in \Phi^+(G)\ra$. This allows us to reduce to the case where $X$ is contained in a proper parabolic subgroup $P=QL$ of $G$. Let $\pi:P\to P/Q$ be the quotient map. Using $\pi$, we identify $L$ with $P/Q$ and so we may view $\pi(X)$ as a subgroup of $L'$. We may as well assume that $P$ is a minimal parabolic (with respect to containing $X$), so $\pi(X)$ is not contained in a proper parabolic subgroup of $L'$. Now $\pi(x) \in L'$ is a regular unipotent element which is contained in an $A_1$-type subgroup $H$ of $L'$ (this follows by combining Theorem \ref{t:red} with the aforementioned earlier work of Seitz and Testerman \cite{ST1} for classical groups). By inspecting \cite{LT99}, we can determine the action of $H$ on $V$, which must be  compatible with the action of $X$ on $V$ given in Theorem \ref{t:red}. In this way we deduce that $(G,p,L') = (E_6,13,D_5)$ and $(E_7,19,E_6)$ are the only possibilities, and this completes the proof of Theorem \ref{t:main}. 

\section*{Notation}

Our notation is fairly standard. For a simple algebraic group $G$ we write $\Phi(G)$, $\Phi^{+}(G)$ and $\Pi(G) = \{\a_1, \ldots, \a_r\}$ for the set of roots, positive roots and simple roots of $G$, with respect to a fixed Borel subgroup, and we follow Bourbaki \cite{Bou} in labelling the simple roots. We will often denote a root $\a = a_1\a_1+\cdots +a_r\a_r$ by writing $\a = a_1\cdots a_r$. 
If $V$ is a module for a group then ${\rm soc}(V)$ and ${\rm rad}(V)$ denote the socle and radical of $V$, respectively, and we write $V^m$ to denote $V \oplus \cdots \oplus V$ (with $m$ summands). It will be convenient to write $[A_1^{n_1}, \ldots, A_k^{n_k}]$ for a block-diagonal matrix with a block $A_i$ occurring with multiplicity $n_i$. In addition, we will write $J_i$ for a standard (upper triangular) unipotent Jordan block of size $i$. 

\section{Preliminaries}\label{s:prel}

In this section we record some preliminary results that will be needed in the proof of Theorem \ref{t:main}. We start by recalling some well known results from the modular representation theory of the simple groups ${\rm PSL}_{2}(p)$. Our main reference is Alperin \cite{Alperin}.

\subsection{Representation theory}\label{s:rep}

Let $K$ be an algebraically closed field of characteristic $p \geqs 5$, let $X = {\rm PSL}_{2}(p)$ and let $P = \la x \ra \cong Z_p$ be a Sylow $p$-subgroup of $X$. 

The subgroup $P$ of $X$ has exactly $p$ indecomposable $KP$-modules, say $W_i$ for $i=1, \ldots, p$, where $\dim W_i = i$ and $W_p$ is the unique projective indecomposable $KP$-module. The element $x$ has Jordan form $[J_i]$ on $W_i$. In particular, if $W$ is a projective $KP$-module, then $\dim W = ap$ for some $a \geqs 1$, and $x$ has Jordan form $[J_p^a]$ on $W$.

There are precisely $(p+1)/2$ simple $KX$-modules, labelled $V_1, V_3, \ldots, V_p$ in \cite{Alperin}, where $\dim V_i = i$. In particular, every simple $KX$-module is odd-dimensional. Here $V_1$ is the trivial module and $V_p$ is the Steinberg module.  It is easy to see that $x$ has Jordan form $[J_i]$ on $V_i$. By a theorem of Steinberg, each $V_i$ is the restriction of a simple module for the corresponding algebraic group of type $A_1$ (see \cite[Section 13]{St2}), so we can refer to the highest weight of $V_i$ with respect to a maximal torus of the algebraic $A_1$. We identify the weights of this $1$-dimensional torus with the set of integers, and we will 
often write $V_i = L_X(i-1)$ to highlight the highest weight of $V_i$.
 
Similarly, there are precisely $(p+1)/2$ projective indecomposable $KX$-modules, labelled $P_1, P_3, \ldots, P_p$ in \cite{Alperin}, where $P_p = V_p$ is simple and the remainder are reducible. Here $\dim P_1 = \dim P_p = p$ and $\dim P_i = 2p$ for $1<i<p$. The element $x$ has Jordan form $[J_p]$ on $P_1$ and $P_p$, and Jordan form $[J_p^2]$ on the remaining $P_i$. The structure of these modules is described by Alperin \cite[pp.48--49]{Alperin}. In terms of composition factors, 
we have
$$P_1 = V_1|V_{p-2}|V_1$$ 
and
$$P_i = V_i|(V_{p-i+1} \oplus V_{p-i-1})|V_{i}$$
where $1<i<p$ is odd. (Here this notation indicates that ${\rm soc}(P_i) \cong P_i/{\rm rad}(P_i) \cong V_i$ and ${\rm rad}(P_i)/{\rm soc}(P_i) \cong V_{p-i+1} \oplus V_{p-i-1}$.) It will be convenient to define
\begin{equation}\label{e:u}
U = P_1 = L_X(0)|L_X(p-3)|L_X(0)
\end{equation}
and
\begin{equation}\label{e:wi}
W(i) = P_{i+1} = L_X(i)|(L_X(p-i-1)\oplus L_X(p-i-3))|L_X(i)
\end{equation}
for $i \in \{2,4, \ldots, p-3\}$. 
 
The Green correspondence (see \cite[Section 11]{Alperin}) implies that if $V$ is an indecomposable $KX$-module then 
$V|_{P} = W \oplus W'$
where $W$ is projective (or zero) and $W'$ is indecomposable (or zero). In particular, the following lemma holds.

\begin{lem}\label{l:jf}
Let $V$ be an $n$-dimensional indecomposable $KX$-module and write $n = ap+b$, where $a \geqs 0$ and $0 \leqs b < p$. Then $x$ has Jordan form $[J_p^a,J_b]$ on $V$.
\end{lem}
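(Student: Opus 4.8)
The plan is to deduce the Jordan form of $x$ on $V$ directly from its restriction to the cyclic Sylow $p$-subgroup $P = \langle x\rangle$, combining the Green correspondence decomposition recorded immediately before the lemma with the explicit description of the indecomposable $KP$-modules given above. First I would restrict $V$ to $P$ and apply the cited consequence of the Green correspondence to write $V|_P = W \oplus W'$, where $W$ is projective (possibly zero) and $W'$ is indecomposable (possibly zero). The one point that needs a little care is that $W'$ is the Green correspondent of $V$: it is either zero (precisely when $V$ is projective as a $KX$-module) or a \emph{non-projective} indecomposable $KP$-module. This non-projectivity is exactly what will force $\dim W' < p$, and hence what makes the remainder in $n = ap+b$ come out correctly.

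Next I would invoke the structure of the indecomposable $KP$-modules stated above. These are $W_1,\dots,W_p$ with $\dim W_i = i$, on each of which $x$ acts as a single Jordan block $[J_i]$, and $W_p$ is the unique projective indecomposable. Consequently any projective $KP$-module is a sum of copies of $W_p$, so it has dimension divisible by $p$ and $x$ acts on it with Jordan form $[J_p^{a'}]$ for some $a'\geqs 0$; in particular $\dim W = a'p$. Since $W'$ is non-projective or zero, it is isomorphic to some $W_{b'}$ with $0 \leqs b' < p$, and $x$ acts on $W'$ as $[J_{b'}]$ (interpreting $b'=0$ as $W'=0$).

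Finally I would put these together. Counting dimensions gives $n = \dim V = a'p + b'$ with $0 \leqs b' < p$, so by uniqueness of the quotient and remainder we obtain $a'=a$ and $b'=b$. Taking the Jordan forms of $x$ on the two summands $W$ and $W'$ together then shows that $x$ acts on $V|_P$, and hence on $V$, with Jordan form $[J_p^a, J_b]$, as required. I do not expect any genuine obstacle: essentially all of the content is already packaged in the Green correspondence statement, and the argument reduces to bookkeeping once one observes that the single non-projective summand $W'$ accounts exactly for the remainder $b$. The only subtlety to flag is ensuring $\dim W' < p$, which is why I would be explicit that $W'$ is the (non-projective) Green correspondent rather than merely an indecomposable summand.
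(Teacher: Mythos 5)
Your argument is correct and is exactly the route the paper intends: the lemma is stated as an immediate consequence of the Green correspondence decomposition $V|_P = W \oplus W'$ together with the classification of indecomposable $KP$-modules, and you have simply filled in the bookkeeping (including the correct observation that the Green correspondent $W'$ is non-projective, hence of dimension $<p$). The paper gives no further proof, so there is nothing to compare beyond this.
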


The main result on the structure of indecomposable $KX$-modules is the following theorem. Here we define a \emph{subtuple} of an $n$-tuple $[m_1, \ldots, m_n]$ to be a tuple of the form $[m_i,m_{i+1}, \ldots, m_j]$ for some $1 \leqs i \leqs j \leqs n$. We denote this by writing 
$$[m_i,m_{i+1}, \ldots, m_j] \subseteq [m_1, \ldots, m_n].$$

\begin{thm}\label{t:structure}
Let $V$ be a reducible indecomposable non-projective $KX$-module. Then there exists an integer $\ell \geqs 2$ and a subtuple 
$$[a_1, \ldots, a_{\ell}] \subseteq [1,p-2,3,p-4, \ldots, p-2,1]$$ 
such that 
$${\rm soc}(V) = V_{a_1} \oplus V_{a_3} \oplus \cdots \oplus V_{a_{\ell-\e}},\;\; V/{\rm soc}(V) = V_{a_2} \oplus V_{a_4} \oplus \cdots \oplus V_{a_{\ell-1+\e}}$$
where $\e=1$ if $\ell$ is even, otherwise $\e=0$. 
\end{thm}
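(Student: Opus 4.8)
The plan is to recognise Theorem~\ref{t:structure} as the specialisation to $X = {\rm PSL}_2(p)$ of the classification of indecomposable modules lying in a block with cyclic defect group (as developed in \cite{Alperin}), and to read off the precise combinatorial shape from the projective indecomposables recorded above. Since a Sylow $p$-subgroup $P = \langle x\rangle$ of $X$ is cyclic of order $p$, every block of $KX$ has cyclic defect group. The Steinberg module $V_p$ is projective and simple, hence a block of defect $0$ contributing no reducible indecomposables; so any reducible indecomposable non-projective $V$ lies in a block of positive defect, and the connectivity established below shows there is a unique such block $B$, with simple modules exactly $V_1, V_3, \ldots, V_{p-2}$. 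I would therefore work throughout inside $B$.

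First I would pin down the Ext-quiver (equivalently the Brauer tree) of $B$ directly from the displayed structure of the projectives. Reading the radical layers of $P_1 = V_1|V_{p-2}|V_1$ and of $P_i = V_i|(V_{p-i+1}\oplus V_{p-i-1})|V_i$ shows that ${\rm Ext}^1(V_i,V_j)\neq 0$ precisely when $j \in \{p-i+1,\, p-i-1\}$; consequently the Brauer tree of $B$ is the line
$$V_1 - V_{p-2} - V_3 - V_{p-4} - V_5 - \cdots$$
terminating in an exceptional vertex of multiplicity $2$ (detected by the self-referential constituent $V_{(p\pm1)/2}$ appearing in its own projective). The tuple $[1,p-2,3,p-4,\ldots,p-2,1]$ is exactly the walk that runs out along this line to the exceptional end and back, so that the subtuples $[a_1,\ldots,a_\ell]$ index the connected subwalks of the tree.

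With the tree in hand, the heart of the argument is to show that every reducible indecomposable non-projective $V$ in $B$ is the ``zigzag'' module attached to such a subwalk, with ${\rm soc}(V)$ and $V/{\rm soc}(V)$ interlacing along it as stated. I would proceed in two steps. The first is to prove that $V$ has Loewy length $2$, i.e. that $V/{\rm soc}(V)$ is semisimple: writing $V$ as a quotient of its projective cover $\bigoplus_c P_c$ and using that ${\rm rad}^2(P_c) = {\rm soc}(P_c)$ is simple for each $c$, one shows that a non-zero ${\rm rad}^2(V)$ would force a head constituent to recur deep in the socle, which on the line can only happen by traversing the whole tree, producing a projective summand and contradicting indecomposability. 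The basic examples here are the Heller translates $\Omega(V_i) = {\rm rad}(P_i)$, each of Loewy length $2$. The second step analyses the two semisimple layers: each simple summand of $V/{\rm soc}(V)$, say $V_{a_{2i}}$, extends (via the top layers of $P_{a_{2i}}$) one or two adjacent constituents $V_{a_{2i-1}},V_{a_{2i+1}}$ of ${\rm soc}(V)$ along the line, and indecomposability forces these local extensions to link into a single connected walk; reading the constituents in walk order yields the subtuple. The Jordan form constraint of Lemma~\ref{l:jf}, obtained from the Green correspondence, controls the total dimension and confirms that no constituent is repeated except at the exceptional vertex.

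I expect the main obstacle to be the exceptional end of the tree. Away from it the admissible walks are forced and the bookkeeping is routine, but the multiplicity-$2$ vertex is where a walk may ``turn around'', and I would need to check by direct computation with $P_{(p-1)/2}$ and $P_{(p+1)/2}$ that this turning produces no indecomposable beyond those already indexed by subtuples, and in particular creates no module of Loewy length $3$. Establishing the Loewy length $2$ property uniformly — for which the line shape of the tree is essential, the analogous statement being false for other Brauer trees — is the step I would treat with the most care.
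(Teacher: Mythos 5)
Your outline is mathematically sound, but note that the paper does not actually prove Theorem \ref{t:structure}: its ``proof'' is a two-line citation to Janusz \cite[Section 3]{Jan} (and to \cite[Section 7.3]{Craven}), and what you have written is essentially a reconstruction of that cited classification via the Brauer tree of the principal block. So the substance agrees with the source the paper relies on; what your version buys is self-containedness, making explicit that everything follows from the displayed structure of the $P_i$ together with the self-injectivity of $KX$, at the cost of the bookkeeping you acknowledge (the walk analysis and the multiplicity-two vertex). Your identification of the block, of the Ext-quiver from the hearts of the projectives, and of the subtuples as connected subwalks is all correct, and the count comes out right (e.g.\ for $p=7$ the full walk is $[1,5,3,3,5,1]$, with the repeated middle entry recording the self-extension of $V_{(p-1)/2}$ or $V_{(p+1)/2}$, whichever is odd-dimensional -- only one of the two is a simple module of $X$, a small slip in your final paragraph).

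One simplification worth making: the Loewy length $2$ step does not need the line shape of the tree and is cleaner than your sketch suggests. Since each displayed heart ${\rm rad}(P_c)/{\rm soc}(P_c)$ is semisimple, one has ${\rm rad}^2(P_c)={\rm soc}(P_c)$ and hence ${\rm rad}^3$ of the block algebra vanishes. Writing $V=P/N$ with $P=\bigoplus_c P_c$ the projective cover, ${\rm rad}^2(V)=({\rm soc}(P)+N)/N$ is non-zero only if ${\rm soc}(P_c)\cap N=0$ for some $c$; as every non-zero submodule of $P_c$ contains its simple socle, this forces $P_c\cap N=0$, so $P_c$ embeds in $V$ and splits off by self-injectivity, contradicting indecomposability. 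Thus what makes the argument work is the semisimplicity of the hearts (equivalently, multiplicity $2$ at the exceptional vertex together with the open-polygon shape), not the line shape per se -- your instinct that the statement fails for other Brauer trees (e.g.\ the star, where the projectives are uniserial of large length) is correct, but the failure there is already visible in ${\rm rad}^2(P_c)\neq{\rm soc}(P_c)$.
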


\begin{proof}
This follows from the discussion in \cite[Section 3]{Jan}. Also see \cite[Section 7.3]{Craven}. 
 \end{proof}

\begin{cor}\label{c:2step}
Let $V$ be an indecomposable $KX$-module with precisely two composition factors. If ${\rm soc}(V) = L_X(i)$ then $V/{\rm soc}(V) \in \{L_X(p-i-3), L_X(p-i-1)\}$ for some $i \in \{0,2, \ldots, p-3\}$, hence $\dim V = p \pm 1$.  
\end{cor}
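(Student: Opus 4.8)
The plan is to deduce the statement directly from Theorem \ref{t:structure}, after first checking that this theorem applies to $V$. Since $V$ has precisely two composition factors it is certainly reducible, and by inspection of the list of projective indecomposable $KX$-modules recorded above (the modules $P_1, \ldots, P_p$ have one, three or four composition factors), no projective indecomposable has exactly two composition factors; hence $V$ is non-projective. Therefore Theorem \ref{t:structure} provides an integer $\ell \geqs 2$ and a subtuple $[a_1, \ldots, a_\ell] \subseteq [1, p-2, 3, p-4, \ldots, p-2, 1]$ describing ${\rm soc}(V)$ and $V/{\rm soc}(V)$. Counting composition factors, $\ell$ equals the total number of composition factors of $V$, so $\ell = 2$; thus $\e = 1$ and the theorem gives ${\rm soc}(V) = V_{a_1}$ and $V/{\rm soc}(V) = V_{a_2}$, where $[a_1, a_2]$ is a pair of \emph{consecutive} entries $[m_j, m_{j+1}]$ of the tuple $[1, p-2, 3, p-4, \ldots, p-2, 1]$.

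The core of the argument is then a direct reading of these consecutive pairs. I would first record that the entry in position $2k-1$ of the tuple equals $2k-1$, while the entry in position $2k$ equals $p-2k$, so that together the entries exhaust the odd integers $1, 3, \ldots, p-2$. Writing ${\rm soc}(V) = V_{a_1} = L_X(i)$ forces $a_1 = i+1$ to be odd, so $i$ is even, and as $a_1$ ranges over the admissible values $1, 3, \ldots, p-2$ we obtain exactly $i \in \{0, 2, \ldots, p-3\}$. A short case analysis on the parity of the position $j$ of $a_1$ then shows that its successor $a_2$ equals $p-i-2$ when $j$ is odd and $p-i$ when $j$ is even; in highest-weight notation this reads $V/{\rm soc}(V) = V_{a_2} \in \{L_X(p-i-3), L_X(p-i-1)\}$, since $V_{p-i-2} = L_X(p-i-3)$ and $V_{p-i} = L_X(p-i-1)$. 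The dimension claim follows at once: $\dim V = a_1 + a_2$ is either $(i+1)+(p-i-2) = p-1$ or $(i+1)+(p-i) = p+1$.

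I expect the only delicate point to be the bookkeeping in passing between the two labellings of the simple modules — the dimension labelling $V_a$ used in Theorem \ref{t:structure} and the highest-weight labelling $L_X(a-1)$ appearing in the statement — together with keeping track of which consecutive pairs actually occur in the palindromic tuple (for instance, the final entry $1$ has no successor, so the value $p-i = p$ is never attained, consistent with $V_p$ being the projective Steinberg module) and confirming that the resulting range of $i$ is exactly $\{0, 2, \ldots, p-3\}$. There is no serious obstacle here: once $\ell = 2$ is established, everything reduces to the elementary combinatorics of the fixed tuple, and the membership ``$\in$'' in the conclusion is simply an upper bound on the possibilities.
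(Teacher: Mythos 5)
Your proof is correct and is exactly the deduction the paper intends: Corollary \ref{c:2step} is stated without proof as an immediate consequence of Theorem \ref{t:structure}, and your argument simply makes explicit the $\ell=2$ case and the bookkeeping of consecutive pairs in the tuple $[1,p-2,3,p-4,\ldots,p-2,1]$. All the details (non-projectivity, the identification $V_a = L_X(a-1)$, the two parity cases giving $p-i-2$ and $p-i$, and the range of $i$) check out.
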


\begin{cor}\label{c:bound}
Let $V$ be a reducible indecomposable $KX$-module. Then $\dim V \geqs p-1$. Moreover, if $V$ has at least four composition factors, then $\dim V \geqs 2(p-1)$.
\end{cor}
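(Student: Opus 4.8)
The plan is to split into two cases according to whether $V$ is projective, and in the non-projective case to extract everything from one elementary numerical feature of the tuple in Theorem \ref{t:structure}.

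First I would treat the non-projective case. Here Theorem \ref{t:structure} supplies an integer $\ell \geqs 2$ and a subtuple $[a_1, \ldots, a_\ell] \subseteq [1,p-2,3,p-4,\ldots,p-2,1]$ recording the composition factors $V_{a_1}, \ldots, V_{a_\ell}$; since $\dim V_{a_k} = a_k$ and the socle/top description accounts for all $\ell$ factors, I get $\dim V = a_1 + \cdots + a_\ell$ and that $V$ has exactly $\ell$ composition factors. The crucial observation is that any two consecutive entries of the defining tuple sum to $p-1$ or $p+1$, hence to at least $p-1$: the odd positions carry the values $1,3,5,\ldots$ and the even positions carry $p-2,p-4,\ldots$, so that $(2m-1)+(p-2m) = p-1$ and $(p-2m)+(2m+1) = p+1$. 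This property is inherited by any subtuple, so $a_k + a_{k+1} \geqs p-1$ for every admissible $k$. Consequently $\ell \geqs 2$ forces $\dim V \geqs a_1 + a_2 \geqs p-1$; and if $\ell \geqs 4$, grouping the first four entries into two disjoint consecutive pairs gives $\dim V \geqs (a_1 + a_2) + (a_3 + a_4) \geqs 2(p-1)$.

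Next I would dispose of the projective case. A reducible projective indecomposable must be one of $P_1, P_3, \ldots, P_{p-2}$ (as $P_p = V_p$ is simple), and the recorded dimensions give $\dim P_1 = p$ and $\dim P_i = 2p$ for $1<i<p$, so $\dim V \geqs p \geqs p-1$ throughout. Moreover $P_1 = V_1|V_{p-2}|V_1$ has only three composition factors, so a projective $V$ with at least four composition factors must be some $P_i$ with $1<i<p$, whence $\dim V = 2p \geqs 2(p-1)$. Combining the two cases yields both assertions.

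The argument is essentially bookkeeping, and I do not anticipate a genuine obstacle. The only point needing care is the consecutive-sum computation for the tuple of Theorem \ref{t:structure}, together with the observation that a length-$\geqs 4$ subtuple always splits off two disjoint consecutive pairs, each contributing at least $p-1$ to the dimension; one should also keep the projective case in mind, since it is not covered by Theorem \ref{t:structure} but is handled directly from the tabulated dimensions. (For the two-factor subcase one could alternatively invoke Corollary \ref{c:2step}, which already gives $\dim V = p \pm 1 \geqs p-1$, but the uniform approach via Theorem \ref{t:structure} covers all $\ell$ at once.)
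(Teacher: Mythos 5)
Your proof is correct and matches the paper's intended argument: the paper states this corollary without proof as an immediate consequence of Theorem \ref{t:structure}, and the implicit reasoning is exactly your observation that consecutive entries of the tuple $[1,p-2,3,p-4,\ldots]$ sum to $p\pm 1\geqs p-1$, together with the trivial check of the reducible projective indecomposables $P_i$ (which Theorem \ref{t:structure} excludes). Your care in separating out the projective case is a point the paper glosses over but is handled exactly as you do, via the tabulated dimensions $\dim P_1=p$ and $\dim P_i=2p$.
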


\subsection{Traces}\label{ss:trace}

As in Section \ref{s:rep}, let $K$ be an algebraically closed field of characteristic $p \geqs 5$ and set $X = {\rm PSL}_{2}(p)$. Let $x_2$ and $x_3$ be representatives of the unique conjugacy classes of elements of order $2$ and $3$ in $X$, respectively (note that $x_i$ is semisimple since $p \geqs 5$). Let $V$ be a $KX$-module and let 
${\rm tr}(V,x_i)$ denote the trace of $x_i$ on $V$. 

\begin{lem}\label{l:1}
If $V = L_X(i)$ then
$${\rm tr}(V,x_2) = (-1)^{i/2},\;\; {\rm tr}(V,x_3) = \left\{\begin{array}{rr}
1 & i \equiv 0 \imod{3} \\
-1 & i \equiv 1 \imod{3} \\
0 & i \equiv 2 \imod{3} 
\end{array}\right.$$
\end{lem}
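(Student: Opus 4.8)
The plan is to realize each simple module $L_X(i) = V_{i+1}$ as the restriction to $X = {\rm PSL}_{2}(p)$ of a rational irreducible module for the ambient algebraic group ${\rm SL}_{2}(K)$, and then to read off the traces from the weights. Concretely, since $i$ is even with $0 \leqs i \leqs p-1$, the Weyl module of highest weight $i$ is already irreducible and coincides with the $i$-th symmetric power $S^i(E)$ of the natural module $E = K^2$; by Steinberg's theorem (Section \ref{s:rep}) its restriction to $X$ is $V_{i+1} = L_X(i)$. The weights of $S^i(E)$ are $i, i-2, \dots, -i$, each with multiplicity one, so a semisimple element conjugate in ${\rm SL}_{2}(K)$ to ${\rm diag}(a, a^{-1})$ acts with eigenvalues $a^i, a^{i-2}, \dots, a^{-i}$ and hence with trace $\sum_{\ell=0}^{i} a^{i-2\ell}$. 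Because $i$ is even, the central element $-I$ acts trivially, so the module genuinely factors through $X$ and the trace is insensitive to the choice of semisimple preimage in ${\rm SL}_{2}(K)$ of a given element of $X$.

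First I would treat $x_2$. Any involution of ${\rm PSL}_{2}(p)$ lifts to an element $g \in {\rm SL}_{2}(p)$ with $g^2 = -I$ (the unique involution of ${\rm SL}_{2}$ being the non-trivial central element $-I$), so $g$ is semisimple of order $4$ with eigenvalues the primitive fourth roots of unity $a, a^{-1}$, where $a^2 = -1$. Substituting into the trace formula, the exponents $i, i-2, \dots, -i$ are all even, so each $a^{i-2\ell} = (-1)^{(i-2\ell)/2}$; writing $i = 2m$ the alternating sum $\sum_{\ell=0}^{2m} (-1)^{m-\ell}$ collapses to $(-1)^{m} = (-1)^{i/2}$, as claimed.

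Next I would treat $x_3$. A lift of an element of order $3$ can be chosen to be semisimple with eigenvalues the primitive cube roots of unity $\omega, \omega^{-1}$ (the order-$6$ lift differs by $-I$ and hence acts identically on $S^i(E)$ since $i$ is even). Using $\omega^3 = 1$ the trace becomes a truncated geometric sum $\sum_{\ell=0}^{i} \omega^{i-2\ell} = \omega^i \sum_{\ell=0}^{i} \omega^{\ell}$, which I would evaluate according to $i \bmod 3$: the number $i+1$ of summands controls the partial geometric sum $\sum_{\ell=0}^{i}\omega^\ell \in \{0,\,1,\,1+\omega\}$, and multiplying by $\omega^i$ gives $1$, $-1$, $0$ in the residue classes $i \equiv 0, 1, 2 \imod{3}$ respectively, using $1 + \omega + \omega^2 = 0$.

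The content is entirely elementary once the modules are identified as symmetric powers; the only point requiring care — the ``main obstacle,'' such as it is — is the passage between $X$ and the algebraic group, namely verifying that the relevant involutions and order-$3$ elements of ${\rm PSL}_{2}(p)$ lift to semisimple elements of ${\rm SL}_{2}(K)$ with the stated roots-of-unity eigenvalues, and that the evenness of $i$ guarantees both that $L_X(i)$ is a genuine $X$-module and that the two admissible semisimple lifts yield the same trace. After this, both formulae follow from the two finite character sums above.
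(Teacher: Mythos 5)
Your proof is correct and follows exactly the route the paper indicates: identify $L_X(i)$ with ${\rm Sym}^i$ of the natural ${\rm SL}_2(K)$-module and evaluate the resulting character sums at the roots-of-unity eigenvalues of the semisimple lifts. The paper records this as a one-line "straightforward calculation"; your write-up supplies the details (including the point that evenness of $i$ makes the trace independent of the choice of lift) and both character sums check out.
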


\begin{proof}
This is a straightforward calculation, using the fact that we can identify $L_X(i)$ with the $i$-th symmetric power ${\rm Sym}^i(M)$, where $M$ is the natural module for ${\rm SL}_{2}(K)$.
 \end{proof}

If $V$ is a $KX$-module with composition factors $M_1, \ldots, M_k$, then 
$${\rm tr}(V,x_i) = \sum_{j=1}^{k}{\rm tr}(M_j,x_i)$$
since the action of $x_i$ is diagonalizable. Therefore, the next two results are immediate corollaries of Lemma \ref{l:1} (here we use the notation $U$ and $W(i)$ defined in (\ref{e:u}) and (\ref{e:wi})).

\begin{lem}\label{l:2}
We have
$${\rm tr}(U,x_2) = \left\{\begin{array}{ll} 1 & p \equiv 1 \imod{4} \\
3 & p \equiv 3 \imod{4} 
\end{array}\right.,\;\; {\rm tr}(U,x_3) = \left\{\begin{array}{ll}
1 & p \equiv 1 \imod{3} \\
2 & p \equiv 2 \imod{3}
\end{array}\right.$$
\end{lem}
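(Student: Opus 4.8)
The plan is to exploit the additivity of the trace over composition factors, which the remark preceding the statement already supplies: since $p \geqs 5$, the elements $x_2$ and $x_3$ have order coprime to $p$ and hence act semisimply on any $KX$-module, so ${\rm tr}(U,x_i)$ equals the sum of ${\rm tr}(M,x_i)$ as $M$ ranges over the composition factors of $U$ counted with multiplicity. By (\ref{e:u}), these factors are two copies of $L_X(0)$ together with one copy of $L_X(p-3)$. I would therefore reduce the entire computation to evaluating the formulas of Lemma \ref{l:1} at $i=0$ and $i=p-3$ and summing.

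For $x_2$, Lemma \ref{l:1} gives ${\rm tr}(L_X(0),x_2) = 1$ and ${\rm tr}(L_X(p-3),x_2) = (-1)^{(p-3)/2}$, noting that $p-3$ is even as $p$ is odd. The only thing to track is the parity of $(p-3)/2$: if $p \equiv 1 \imod{4}$ then $p-3 \equiv 2 \imod{4}$, so $(p-3)/2$ is odd and this factor contributes $-1$, yielding ${\rm tr}(U,x_2) = 2-1 = 1$; whereas if $p \equiv 3 \imod{4}$ then $p-3 \equiv 0 \imod{4}$, so $(p-3)/2$ is even and the factor contributes $+1$, yielding ${\rm tr}(U,x_2) = 2+1 = 3$.

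For $x_3$, Lemma \ref{l:1} gives ${\rm tr}(L_X(0),x_3) = 1$, and for the factor $L_X(p-3)$ I would use $p-3 \equiv p \imod{3}$. If $p \equiv 1 \imod{3}$ then $p-3 \equiv 1 \imod{3}$, so this factor contributes $-1$ and ${\rm tr}(U,x_3) = 2-1 = 1$; if $p \equiv 2 \imod{3}$ then $p-3 \equiv 2 \imod{3}$, so it contributes $0$ and ${\rm tr}(U,x_3) = 2+0 = 2$.

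There is no genuine obstacle here: the whole content is the elementary congruence bookkeeping for $p-3$ modulo $4$ and modulo $3$, combined with the semisimplicity of the $p'$-elements $x_2$ and $x_3$ that licenses the termwise summation. The one point I would be careful about is that $U = P_1$ is not a semisimple module (it is the projective indecomposable with the nonsplit structure $L_X(0)|L_X(p-3)|L_X(0)$), so the trace must really be computed on composition factors rather than on any putative direct-sum decomposition; this is precisely what the semisimple action of $x_2$ and $x_3$ permits.
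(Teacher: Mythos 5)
Your proof is correct and is exactly the paper's argument: the paper states Lemma \ref{l:2} as an immediate corollary of Lemma \ref{l:1} via the additivity of the trace over the composition factors $L_X(0)$, $L_X(p-3)$, $L_X(0)$ of $U$, which is precisely the reduction you carry out. The congruence bookkeeping for $p-3$ modulo $4$ and modulo $3$ is also accurate.
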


\begin{lem}\label{l:3}
We have
$${\rm tr}(W(i),x_2) = \left\{\begin{array}{rr} 
2 & i \equiv 0 \imod{4} \\
-2 & i \equiv 2 \imod{4} 
\end{array}\right.$$
and
$${\rm tr}(W(i),x_3) = \left\{\begin{array}{ll}
\left\{\begin{array}{ll}
2 & p \equiv 1 \imod{3} \\
1 & p \equiv 2 \imod{3}
\end{array}\right. & i \equiv 0 \imod{3} \\
& \\
\left\{\begin{array}{ll}
-1 & p \equiv 1 \imod{3} \\
-2 & p \equiv 2 \imod{3}
\end{array}\right. & i \equiv 1 \imod{3} \\
& \\
\left\{\begin{array}{rr}
-1 & p \equiv 1 \imod{3} \\
1 & p \equiv 2 \imod{3}
\end{array}\right. & i \equiv 2 \imod{3} 
\end{array}\right.$$
\end{lem}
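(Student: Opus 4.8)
The plan is to exploit the additivity of the trace over composition factors, exactly as announced in the paragraph preceding the statement and as already used to establish Lemma \ref{l:2}. Since $p \geqs 5$, the elements $x_2$ and $x_3$ are semisimple of order coprime to $p$, so they act diagonalizably on any $KX$-module and ${\rm tr}(W(i),x_j)$ equals the sum of the traces of $x_j$ on the composition factors of $W(i)$. By the definition (\ref{e:wi}), these factors are $L_X(i)$ with multiplicity two, together with $L_X(p-i-1)$ and $L_X(p-i-3)$, so
$${\rm tr}(W(i),x_j) = 2\,{\rm tr}(L_X(i),x_j) + {\rm tr}(L_X(p-i-1),x_j) + {\rm tr}(L_X(p-i-3),x_j),$$
and the whole computation reduces to evaluating each summand via Lemma \ref{l:1}. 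Since $p$ is odd and $i \in \{2,4,\ldots,p-3\}$ is even, all three highest weights $i$, $p-i-1$, $p-i-3$ are even, so Lemma \ref{l:1} applies directly.

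For $x_2$, the hard part is spotting a cancellation. First I would observe that the two outer weights $p-i-1$ and $p-i-3$ differ by $2$, so their halves differ by $1$; hence ${\rm tr}(L_X(p-i-1),x_2)=(-1)^{(p-i-1)/2}$ and ${\rm tr}(L_X(p-i-3),x_2)=(-1)^{(p-i-3)/2}$ have opposite sign and sum to $0$. This leaves ${\rm tr}(W(i),x_2)=2(-1)^{i/2}$, and the stated dichotomy is then immediate: $(-1)^{i/2}=1$ when $i\equiv 0 \imod{4}$ and $(-1)^{i/2}=-1$ when $i\equiv 2\imod{4}$.

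For $x_3$ the strategy is identical in spirit but requires a short case analysis instead of a single cancellation. The residue modulo $3$ of each of the three weights depends on both $i\imod{3}$ and $p\imod{3}$; using $p-i-3\equiv p-i \imod{3}$, one reduces to the six combinations obtained from $i\in\{0,1,2\}\imod{3}$ and $p\in\{1,2\}\imod{3}$. In each combination I would read off the three trace values $\{1,-1,0\}$ from Lemma \ref{l:1} and add them with multiplicities $(2,1,1)$; a direct check shows the six resulting sums reproduce exactly the three-way table in the statement.

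The only genuine obstacle is clerical: keeping track of the six residue combinations in the $x_3$ case without arithmetic slips. There is no conceptual difficulty, and, as advertised, the whole result is an immediate corollary of Lemma \ref{l:1} together with the diagonalizability of semisimple elements.
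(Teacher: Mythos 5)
Your proposal is correct and follows exactly the route the paper takes: the paper states Lemma \ref{l:3} as an immediate corollary of Lemma \ref{l:1} via the additivity of the trace over the composition factors $L_X(i)^2$, $L_X(p-i-1)$, $L_X(p-i-3)$ of $W(i)$, which is precisely your argument. Your case checks for $x_3$ and the cancellation for $x_2$ all verify.
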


Let $G$ be a simple algebraic group over $K$ of adjoint type, let $V$ be a $KG$-module and let $m$ be a positive integer. Define 
$$\mathcal{T}_{m}(G,V) = \{{\rm tr}(V,x) \mid \mbox{$x \in G$ has order $m$} \}.$$ 
Recall that the \emph{adjoint module} for $G$ is the Lie algebra ${\rm Lie}(G)$, on which $G$ acts via the adjoint representation.

\begin{prop}\label{p:trace}
Let $G$ be a simple exceptional algebraic group of adjoint type over an algebraically closed field of characteristic $p \geqs 5$. Let $V = {\rm Lie}(G)$ be the adjoint module. Then $\mathcal{T}_{m}(G,V)$ is recorded in Table 
$\ref{t:trace}$ for $m \in \{2,3\}$.
\end{prop}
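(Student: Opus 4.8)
The plan is to compute, for each of the five exceptional types $G \in \{G_2, F_4, E_6, E_7, E_8\}$, the set $\mathcal{T}_m(G, V)$ with $V = {\rm Lie}(G)$ and $m \in \{2,3\}$, and to check that the results agree with the entries of Table~\ref{t:trace}. Since $\mathcal{T}_m(G,V)$ is by definition the set of traces ${\rm tr}(V,x)$ as $x$ ranges over all elements of order exactly $m$ in $G$, the key observation is that this trace depends only on the $G$-conjugacy class of $x$, so it suffices to enumerate the conjugacy classes of elements of order $2$ and order $3$ in $G$ and compute one trace per class.

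First I would fix, for each $G$, the classes of involutions and of order-$3$ elements. Because $p \geqs 5$, every such element is semisimple, hence lies in a maximal torus, and is therefore determined up to conjugacy by its image under the Weyl group action on the relevant torsion points of $T$; equivalently these classes are classified by the standard theory of semisimple classes in reductive groups (e.g.\ via Kac coordinates / the extended Dynkin diagram, or simply by reading off the conjugacy classes of elements of the given order in the adjoint group). For each such class I would pick a representative $x \in T$ and compute ${\rm tr}({\rm Lie}(G), x)$ as follows: on the Cartan subalgebra $\mathfrak{t} = {\rm Lie}(T)$ the element $x$ acts trivially, contributing $r = {\rm rank}(G)$ to the trace, and on each root space $\mathfrak{g}_\alpha$ it acts by the scalar $\alpha(x)$, so that
\[
{\rm tr}({\rm Lie}(G), x) = r + \sum_{\alpha \in \Phi(G)} \alpha(x).
\]
For an element of order $m$ this reduces to counting, for each $m$-th root of unity $\zeta$, how many roots $\alpha$ satisfy $\alpha(x) = \zeta$, which is a finite combinatorial computation over $\Phi(G)$ once the values $\alpha_i(x)$ on the simple roots are fixed. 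Carrying this out for each representative yields a candidate trace, and collecting these over all classes of a given order produces the set $\mathcal{T}_m(G,V)$.

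The main obstacle I expect is not any single trace computation — each is routine given a class representative — but rather ensuring that the enumeration of order-$2$ and order-$3$ classes is complete and that one correctly identifies which torus elements of the required order actually exist in the adjoint group (as opposed to a simply connected cover), since the theorem specifically concerns adjoint type. Missing or double-counting a class would give the wrong set. A practical way to organize and verify this is to use the character tables / class data for the relevant finite or algebraic groups, or to perform the root-sum computation directly in {\sc Magma} using an explicit root datum for each $G$, which is consistent with the computational approach the authors adopt elsewhere in the paper. I would therefore structure the proof as a case-by-case verification, type by type, presenting for each $G$ the classes of order $m$, a representative, and the resulting trace, and then simply observe that the collected values match Table~\ref{t:trace}; the bulk of the write-up is bookkeeping rather than conceptual difficulty.
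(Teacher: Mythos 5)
Your proposal is correct in outline and would work, but it takes a more laborious computational route than the paper. The paper's proof also reduces to a case-by-case enumeration of the conjugacy classes of semisimple elements of order $2$ and $3$, but it avoids choosing explicit torus representatives and summing over roots: it reads off the centralizer types of such elements from \cite[Tables 4.3.1 and 4.7.1]{GLS}, uses the identity $\dim C_V(g)=\dim C_G(g)$ for semisimple $g$, and then exploits the self-duality of $V={\rm Lie}(G)$ to pin down all eigenvalue multiplicities. For $m=2$ this gives ${\rm tr}(V,g)=2\dim C_G(g)-\dim G$ immediately, and for $m=3$ self-duality forces the $\omega$- and $\omega^2$-eigenspaces to have equal dimension, so again the trace is determined by $\dim C_G(g)$ alone. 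Thus the entire table follows from tabulated centralizer dimensions, with no need for Kac coordinates or root-by-root counting. Your approach buys nothing extra here for $m\in\{2,3\}$ (it would become necessary for larger $m$, where self-duality no longer determines the multiplicities --- and indeed the authors resort to exactly the kind of explicit eigenvalue computation you describe in Remark \ref{r:AJL} for elements of order $5$, $7$ and $19$), whereas the paper's method is shorter and pushes the completeness-of-enumeration burden onto a standard reference rather than onto a fresh classification. Your concern about adjoint versus simply connected type is well placed; the paper confronts the same issue explicitly in Remark \ref{r:eeig} when restricting to elements of order $3$ lying in a ${\rm PSL}_2(p)$ subgroup of $E_6$.
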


\begin{table}
$$\begin{array}{lll} \hline
G & \mathcal{T}_{2}(G,V) & \mathcal{T}_{3}(G,V) \\ \hline
G_2 & -2 & -1, 5 \\
F_4 & -4, 20 & -2,7 \\
E_6 & -2, 14 & -3, 3, 6, 15, 30 \\
E_7 & -7,5,25 & -2, 7, 34, 52 \\ 
E_8 & -8, 24 & -4, 5, 14, 77 \\ \hline
\end{array}$$
\caption{Traces of elements of order $2$ and $3$ on the adjoint module}
\label{t:trace}
\end{table}

\begin{proof}
This follows by inspecting the dimensions of the centralizers of elements of order $m$ in $G$ (see \cite[Tables 4.3.1 and 4.7.1]{GLS}), using the fact that
$$\dim C_{V}(g) = \dim C_G(g)$$
for every semisimple element $g \in G$ (see \cite[Section 1.14]{Carter}, for example). 

For instance, if $g \in G = E_8$ has order $3$ and $C_G(g) = A_8$, then $\dim C_V(g) = 80$ and the self-duality of $V$ implies that the action of $g$ on $V$ is given by the diagonal matrix $[I_{80}, \omega I_{84}, \omega^2 I_{84}]$, up to conjugacy, where $\omega$ is a primitive cube root of unity. Therefore, ${\rm tr}(V,g) = -4$.
 \end{proof}

\begin{remk}\label{r:eeig}
Suppose $X = {\rm PSL}_{2}(p)$ is contained in $G=E_6$, where $p \geqs 5$ and $G$ is adjoint. Write $G = \hat{G}/S$ and $X = \hat{X}/S$, where $\hat{G}$ is the simply connected group of type $E_6$ and $S = Z_3$ is the centre of $\hat{G}$. Now $X$ has Schur multiplier $Z_2$, which implies that $\hat{X} = Z_3 \times X$. Therefore, every element $y \in X$ of order $3$ lifts to an element in $\hat{G}$ of order $3$. In particular, if $y \in X$ has order $3$ then $C_G(y)^0 = A_5T_1$, $D_4T_2$ or $A_2^3$ (see \cite[Table 4.7.1]{GLS}), whence ${\rm tr}(V,y) \in \{-3,6,15\}$ with respect to $V = {\rm Lie}(G)$.
\end{remk}

\begin{remk}\label{r:AJL}
In a few cases it is helpful to know the eigenvalue multiplicities on $V$ of elements in $G$ of order $m>3$ for certain values of $m$; the relevant cases are the following:
$$(G,m) \in \{(F_4,7), (E_6,7), (E_7,5), (E_8,19)\}.$$
It is straightforward to obtain this information with the aid of {\sc Magma} \cite{magma}, using an algorithm of Litterick (see \cite[Section 3.3.1]{AJL}), which is heavily based on work of Moody and Patera \cite{MP}. We thank Dr. Litterick for his assistance with these computations. 
\end{remk}

\subsection{$A_1$-type subgroups}\label{ss:a1}

Let $G$ be a simple algebraic group and recall that $p$ is a \emph{good} prime for $G$ if $p>2$ in types $B,C$ and $D$, $p>3$ for $G_2,F_4,E_6$ and $E_7$, and $p>5$ when $G$ is of type $E_8$ (all primes are good in type $A$). 

\begin{prop}\label{p:0}
Let $G$ be a simple algebraic group of adjoint type over an algebraically closed field of good characteristic $p>0$. Let $x \in G$ be an element of order $p$. 
\begin{itemize}\addtolength{\itemsep}{0.2\baselineskip}
\item[{\rm (i)}] There is an $A_1$-type subgroup of $G$ containing $x$.
\item[{\rm (ii)}] If $x$ is regular then the subgroup in {\rm (i)} is unique up to $G$-conjugacy. 
\end{itemize}
\end{prop}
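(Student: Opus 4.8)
The plan is to treat the two parts separately, deducing (i) from the literature and proving (ii) by means of associated cocharacters and $\mathfrak{sl}_2$-triples.

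For part (i), I would first note that an element of order $p$ in characteristic $p$ is automatically unipotent: writing $x = su$ for its Jordan decomposition, the relation $x^p = 1$ forces $s^p = 1$ by uniqueness of the decomposition, and since a semisimple element has order prime to $p$ this gives $s = 1$, so $x = u$ is unipotent. The existence of an $A_1$-type overgroup is then exactly the content of the main theorem of \cite{PST}, whose sole exception arises for $G = G_2$ with $p = 3$. As $3$ is a bad prime for $G_2$, this case is excluded by the good-characteristic hypothesis, so $x$ lies in a closed connected subgroup of type $A_1$, that is, an $A_1$-type subgroup in the sense of this paper.

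For part (ii), I would exploit the extra structure available when $x$ is regular. The order formula quoted in the introduction shows that a regular unipotent element of order $p$ forces $p \geqs h$, so $p$ is large relative to $G$ and the standard machinery of Springer isomorphisms, $\mathfrak{sl}_2$-triples and associated cocharacters applies. Fix an $A_1$-type subgroup $A$ containing $x$, with maximal torus $T_A$. Since every non-trivial unipotent element of an $A_1$ is regular in that $A_1$, the element $x$ is a regular unipotent element of $A$; transporting to $\mathfrak{g} = \mathrm{Lie}(G)$ by a Springer isomorphism (which identifies $C_G(x)$ with $C_G(e)$), the corresponding nilpotent $e$ is regular, and the cocharacter $\tau_A$ afforded by the coroot of $A$ is a cocharacter associated to $e$: it satisfies $\mathrm{Ad}(\tau_A(c))e = c^2 e$ together with the non-negativity property $C_{\mathfrak{g}}(e) \subseteq \bigoplus_{i \geqs 0}\mathfrak{g}(\tau_A, i)$.

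The key step is then the conjugacy of associated cocharacters in good characteristic: any two cocharacters associated to $e$ are conjugate under $C_G(e) = C_G(x)$. Given a second such subgroup $A'$ with cocharacter $\tau_{A'}$, I may therefore replace $A'$ by a $C_G(x)$-conjugate so as to arrange $\tau_A = \tau_{A'} =: \tau$. Normalising $e_A = e = e_{A'}$ and writing $h = \mathrm{d}\tau(1)$, the Lie algebras $\mathrm{Lie}(A)$ and $\mathrm{Lie}(A')$ are $\mathfrak{sl}_2$-triples $(e,h,f)$ and $(e,h,f')$ sharing the same $e$ and $h$; the non-negativity of $C_{\mathfrak{g}}(e)$ then forces $f - f' \in C_{\mathfrak{g}}(e) \cap \mathfrak{g}(\tau, -2) = 0$, so $f = f'$ and $\mathrm{Lie}(A) = \mathrm{Lie}(A')$. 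As each of $A$ and $A'$ is the connected subgroup generated by the one-dimensional unipotent subgroups tangent to $e$ and to $f$, it is recovered from its Lie algebra, whence $A = A'$. This yields uniqueness up to $C_G(x)$-conjugacy, and in particular up to $G$-conjugacy.

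The main obstacle I anticipate is verifying that the positive-characteristic $\mathfrak{sl}_2$-theory genuinely applies: one must confirm both that $\tau_A$ really is a cocharacter associated to $e$ (so that the crucial containment $C_{\mathfrak{g}}(e) \subseteq \bigoplus_{i \geqs 0}\mathfrak{g}(\tau_A,i)$ holds) and that associated cocharacters are conjugate in the relevant range. Both hold here because regularity guarantees $p \geqs h$, placing us comfortably within the good-characteristic setting where the structure theory of nilpotent orbits is established; nevertheless this is the point that requires the most care and for which precise references should be supplied.
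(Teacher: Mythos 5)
Your proposal is correct in outline but takes a genuinely different route from the paper for part (ii). For part (i) the two arguments agree in substance (the paper cites the main theorem of \cite{T} rather than \cite{PST}, but both yield the existence statement once one observes, as you do, that an element of order $p$ is unipotent and that the $G_2$, $p=3$ exception is excluded by good characteristic). For part (ii) the paper argues by cases: for exceptional $G$ it simply quotes \cite[Theorem 4]{LT99}, and for classical $G$ it works with the natural module, using \cite[Theorem 1.2]{TZ} to see that an $A_1$-overgroup is $G$-irreducible, hence acts irreducibly and tensor indecomposably in types $A$, $B$, $C$ (so uniqueness follows from the representation theory of $A_1$), with a separate reduction to a $B_{r-1}$ subgroup in type $D_r$. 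Your cocharacter argument is uniform across all types and its ingredients are genuine theorems in good characteristic: for regular $e$ the group $C_G(e)$ is unipotent, so any cocharacter $\tau$ with ${\rm Ad}(\tau(c))e=c^2e$ and image in $\mathcal{D}G$ is associated to $e$; associated cocharacters are conjugate under $C_G(e)^{\circ}$; and the $\tau$-weights on $C_{\mathfrak{g}}(e)$ are the positive integers $d_i$ of Table \ref{tab:di}, so $C_{\mathfrak{g}}(e)\cap\mathfrak{g}(\tau,-2)=0$ as you need. Beyond supplying references (McNinch's work on optimal ${\rm SL}_2$-homomorphisms covers exactly this ground), the two points you must nail down are: (a) that the \emph{same} nilpotent $e$ lies in ${\rm Lie}(A)$ and ${\rm Lie}(A')$, which requires the compatibility of the exponential of $G$ (available since $p\geqs h$) with the exponentials of the $A_1$-subgroups; and (b) the final passage from ${\rm Lie}(A)={\rm Lie}(A')$ to $A=A'$, which is not automatic for subgroups in characteristic $p$ --- here you must check that the root subgroups of $A$ and $A'$ relative to ${\rm im}(\tau)$ really are $\exp(Ke)$ and $\exp(Kf)$, e.g.\ because each is a $1$-dimensional $\tau$-stable connected subgroup of an abelian unipotent centralizer whose $\tau$-weight spaces are $1$-dimensional of distinct weights (compare Proposition \ref{l:exp1}). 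With these points supplied your proof works and is arguably cleaner than the paper's case division; what the paper's approach buys is that it avoids the nilpotent-orbit machinery entirely, at the cost of quoting \cite{LT99} for the exceptional groups and handling type $D$ separately.
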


\begin{proof}
Part (i) follows from the main theorem of \cite{T}. Part (ii), for $G$ exceptional, follows from \cite[Theorem 4]{LT99}. Now assume $G$ is classical and let $H$ be an $A_1$-type subgroup of $G$ containing $x$. Let $V$ be the natural module for $G$. By \cite[Theorem 1.2]{TZ}, $H$ is not contained in a proper parabolic subgroup of $G$. In particular, if $G$ is of type $A, B$ or $C$ then $H$ acts irreducibly and tensor indecomposably (see \cite[Proposition 2.3]{TZ}) on $V$  and the conjugacy statement follows from representation theory. 

Finally, let us assume $G = D_r$ (with $r \geqs 4$). We claim that $H<L<G$, where $L=B_{r-1}$ is the stabilizer of a non-singular $1$-space. The result then follows since $H$ is unique in $L$ up to $L$-conjugacy, and $L$ itself is unique up to $G$-conjugacy. To justify the claim, first observe that $x$ has Jordan form $[J_{2r-1},J_1]$ on $V$, using \cite[Lemma 1.2(ii)]{SS} and the fact that $x$ has order $p$, so $p\ne 2$. If $H$ acts irreducibly on $V$ then the Jordan form of $x$ implies that $H$ is tensor decomposable, but this is incompatible with \cite[Lemma 1.5]{SS}. Therefore, $H$ acts reducibly on $V$ and we complete the argument by applying \cite[Lemma 2.2]{LT}.
 \end{proof}

\begin{prop}\label{p:11}
Let $G$ be a simple exceptional algebraic group of adjoint type and let $x \in G$ be a regular unipotent element such that  
$$x \in X = {\rm PSL}_{2}(p)<A<G,$$ 
where $A$ is an $A_1$-type subgroup. Then the action of $X$ on the adjoint module $V = {\rm Lie}(G)$ is given in Table $\ref{t:dec}$.
\end{prop}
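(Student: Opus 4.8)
The plan is to compute $V|_X$ in two stages: first record the structure of $V = {\rm Lie}(G)$ as a module for the overgroup $A$, and then restrict this to the finite subgroup $X = {\rm PSL}_2(p)<A$ using the representation theory assembled in Section \ref{s:rep}. By Proposition \ref{p:0}(ii), the $A_1$-type subgroup $A$ containing the regular unipotent element $x$ is unique up to $G$-conjugacy, so $V|_A$ is determined independently of any choices and can be read off from the data in \cite{LT99}. Since $x$ is regular in $G$ we have $p \geqs h$, and every composition factor of $V|_A$ has highest weight at most $2(h-1) \leqs 2p-2$; in particular only $A$-modules of highest weight strictly less than $2p$ occur.

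For the restriction step I would treat each indecomposable summand $M$ of $V|_A$ according to its highest weight $n$. By \cite{LT99} these summands are tilting modules for $A$, so there are two cases. If $n \leqs p-1$ then $M = L_A(n)$ is simple, and Steinberg's restriction theorem gives $M|_X = L_X(n)$. If $p \leqs n \leqs 2p-2$, then $M|_X$ is a projective $KX$-module, and by the classification of projective indecomposables recalled in Section \ref{s:rep} it is isomorphic to $U$ or to some $W(i)$, as defined in (\ref{e:u}) and (\ref{e:wi}). Carrying this out summand by summand produces a candidate decomposition of $V|_X$ for each pair $(G,p)$.

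It then remains to confirm that the composition factors are genuinely glued into the indecomposable summands claimed in Table \ref{t:dec}, rather than splitting apart. For this I would apply two independent consistency checks. First, Lemma \ref{l:jf} fixes the Jordan form of $x$ on any indecomposable $KX$-module from its dimension modulo $p$, and the resulting Jordan form of $x$ on the whole of $V$ must agree with that of a regular unipotent element acting on the adjoint module, which is known. Second, the traces ${\rm tr}(V,x_2)$ and ${\rm tr}(V,x_3)$, computed from the candidate decomposition via Lemmas \ref{l:1}, \ref{l:2} and \ref{l:3}, must lie in the admissible sets $\mathcal{T}_2(G,V)$ and $\mathcal{T}_3(G,V)$ of Proposition \ref{p:trace}. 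Together with the structural restrictions of Theorem \ref{t:structure} on the possible shapes of indecomposable $KX$-modules, these checks pin down the decomposition.

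The main obstacle is the passage from the list of composition factors of $V|_A$ to its genuine indecomposable structure and the behaviour of that structure under restriction to $X$: since $V|_A$ is generally not semisimple, extensions between composition factors can persist upon restriction, so knowing the factors alone does not determine $V|_X$. Controlling this cleanly relies on the explicit module-theoretic input of \cite{LT99} for the $A$-action, and in the borderline cases — where two exponents yield the same highest weight, or a weight equals $p-1$ so that a simple summand meets the Steinberg module — the indecomposable structure must be checked directly, which is where the explicit {\sc Magma} computations indicated in the introduction enter.
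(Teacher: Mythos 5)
Your proposal follows essentially the same route as the paper: express $V|_A$ as a direct sum of indecomposable tilting modules for $A$ (the paper reads this off from \cite[Table 10.1]{LS04}) and restrict each summand to $X$, with the simple summands giving $L_X(n)$ and the non-simple tilting summands $T(c)$, $p \leqs c \leqs 2p-2$, giving projective indecomposables. The only substantive difference is that the paper settles the indecomposability of $T(c)|_X$ and identifies which $W(i)$ occurs directly via \cite[Lemma 2.3]{Seitz} together with a socle comparison, so the Jordan-form/trace consistency checks and the {\sc Magma} verifications you invoke for the ``gluing'' step are not actually needed here.
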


\begin{proof}
A precise description of $V|_{A}$ as a tilting module is given in \cite[Table 10.1]{LS04} (for $G=E_6$ we may assume that $A<F_4<G$ so the action of $A$ on $V$ can be deduced from the actions of $A$ on ${\rm Lie}(F_4)$ and the minimal module for $F_4$ (see \cite[Table 10.2]{LS04})). Following \cite{LS04}, we write $T(\l;\mu; \ldots)$ for a tilting module having the same composition factors as the direct sum of Weyl modules for $A$ with highest weights $\l,\mu, \ldots$ In terms of this notation, we get
$$V|_{A} = \left\{\begin{array}{ll}
T(10;2) & G=G_2 \\
T(22;14;10;2) & G=F_4 \\
T(22;16;14;10;8;2) & G=E_6 \\
T(34;26;22;18;14;10;2) & G=E_7 \\
T(58;46;38;34;26;22;14;2) & G=E_8 
\end{array}\right.$$

As explained at the start of \cite[Section 10]{LS04}, we can express $T(\l;\mu; \ldots)$ as a direct sum of indecomposable tilting modules of the form $T(c)$, where the highest weight $c$ is at most $2p-2$. For example, suppose $G=F_4$ and $p=19$, so $V|_A = T(22;14;10;2)$ as above. The highest weight is $22$, so one summand is $T(22)$, which is a uniserial module of shape $14|22|14$ (see \cite[Lemma 2.3]{Seitz}). The highest weight not already accounted for is $10$, so $T(10) = L_A(10)$ is a summand and we deduce that $V|_{A} = T(22) \oplus L_A(10) \oplus L_A(2)$ and thus
$$V|_{X} = T(22)|_{X} \oplus L_X(10) \oplus L_X(2).$$
By \cite[Lemma 2.3]{Seitz}, $T(22)|_{X}$ is a projective indecomposable $KX$-module of dimension $2p=38$, so $T(22)|_{X} = W(i)$ for some $i$. By comparing socles, it follows that $i=14$ and thus 
$$V|_{X} = W(14) \oplus L_X(10) \oplus L_X(2)$$
as recorded in Table \ref{t:dec}. The other cases are entirely similar and we omit the details.
 \end{proof}

\begin{table}
$$\begin{array}{lll} \hline
G & p & V|_{X} \\ \hline
G_2 & p \geqs 11 & L_X(10) \oplus L_X(2) \\
& p = 7 & W(2) \\
F_4 & p \geqs 23 & L_X(22) \oplus L_X(14) \oplus L_X(10) \oplus L_X(2)  \\
&  p = 19 & W(14) \oplus L_X(10) \oplus L_X(2)  \\
&  p = 17 & W(10) \oplus L_X(14) \oplus L_X(2)  \\
&  p = 13 & W(10) \oplus W(2)  \\
E_6 & p \geqs 23 & L_X(22) \oplus L_X(16) \oplus L_X(14) \oplus L_X(10) \oplus L_X(8) \oplus L_X(2)  \\
& p = 19 & W(14) \oplus L_X(16) \oplus L_X(10) \oplus L_X(8) \oplus L_X(2)  \\
&  p = 17 & W(10) \oplus L_X(16) \oplus L_X(14) \oplus L_X(8) \oplus L_X(2)  \\
&  p = 13 & W(10) \oplus W(8) \oplus W(2)  \\
E_7 & p \geqs 37 & L_X(34) \oplus L_X(26) \oplus L_X(22) \oplus L_X(18) \oplus L_X(14) \oplus L_X(10) \oplus L_X(2)  \\ 
& p = 31 & W(26) \oplus L_X(22) \oplus L_X(18) \oplus L_X(14) \oplus L_X(10) \oplus L_X(2)  \\
& p = 29 & W(22) \oplus L_X(26) \oplus L_X(18) \oplus L_X(14) \oplus L_X(10) \oplus L_X(2)  \\
& p = 23 & W(18) \oplus W(10) \oplus L_X(22) \oplus L_X(14) \oplus L_X(2)  \\
 & p = 19 & W(14) \oplus W(10) \oplus W(2) \oplus L_X(18)  \\
E_8 & p \geqs 59 & L_X(58) \oplus L_X(46) \oplus L_X(38) \oplus L_X(34)  \oplus L_X(26)  \oplus L_X(22) \\
& &  \oplus \,  L_X(14) \oplus L_X(2)  \\ 
& p = 53 & W(46) \oplus L_X(38) \oplus L_X(34)  \oplus L_X(26)  \oplus L_X(22)\oplus L_X(14) \oplus L_X(2)    \\
 & p = 47 & W(34) \oplus L_X(46) \oplus L_X(38)  \oplus L_X(26)  \oplus L_X(22)  \oplus L_X(14) \oplus L_X(2) \\
 & p = 43  & W(38) \oplus W(26) \oplus L_X(34) \oplus L_X(22)  \oplus L_X(14) \oplus L_X(2)  \\
 & p = 41 & W(34) \oplus W(22) \oplus L_X(38) \oplus L_X(26)  \oplus L_X(14) \oplus L_X(2)   \\
 & p = 37 & W(34) \oplus W(26) \oplus W(14) \oplus L_X(22) \oplus L_X(2)   \\
 & p = 31 & W(26) \oplus W(22) \oplus W(14) \oplus W(2)  \\ \hline
\end{array}$$
\caption{The action of $X$ on $V={\rm Lie}(G)$ in Proposition \ref{p:11}}
\label{t:dec}
\end{table}

For the remainder of this section, let $G$ be a simple exceptional algebraic group of adjoint type over an algebraically closed field $K$ of 
characteristic $p>0$, and let $r$ and $h = h(G)$ be the rank and Coxeter number of $G$, respectively. We will assume $G$ contains a regular unipotent element of order $p$, which means that 
\begin{equation}\label{e:pbdh}
p \geqs h.
\end{equation}
We need to recall the construction of $A_1$-type subgroups of $G$ containing regular unipotent elements, following the treatment in \cite{Serre, T92, T}.

First we need some new notation. Let ${\mathcal L}_{\mathbb C}$ be a simple Lie algebra over $\mathbb{C}$ of type 
$\Phi(G)$. Fix a Chevalley basis 
$$\mathcal{B} = \{e_{\a}, f_{\a}, h_{\gamma} \mid \a \in \Phi^{+}(G), \, \gamma \in \Pi(G) \}$$
of ${\mathcal L}_{\mathbb C}$ and write $z_i = z_{\a_i}$ for $z \in \{e,f,h\}$ and $\Pi(G) = \{\a_1, \ldots, \a_r\}$. 
It will be convenient to define $f_{\a} = e_{-\a}$ for each $\a \in \Phi^+(G)$. Let ${\mathcal L}_\Z$ be the $\Z$-span of $\mathcal{B}$ and set ${\mathcal L}_K = \mathcal{L}_{\Z} \otimes_{\Z} K$.  (By abuse of notation, we also write $e_\alpha, f_\alpha, e_i, f_i, h_i$ for the elements
$e_\alpha\otimes 1$, $f_\alpha\otimes 1$, etc., in ${\mathcal L}_K$.) Fix a root $\a \in \Phi(G)$. As in the familiar Chevalley construction, we have
$$({\rm ad}(e_\alpha)^j/j!)({\mathcal L}_\Z)\subseteq {\mathcal L}_\Z$$
for all $j\geqs 0$, and this allows us to construct the element 
\[
{\rm exp}({\rm ad}({\bf x}e_\alpha))\in {\rm GL}_{\dim G}(\Z[{\bf x}]),
\]
where ${\bf x}$ is an indeterminate. Passing to $K$, we obtain a $1$-dimensional unipotent subgroup
$$U_{\a} = \{ {\rm exp}({\rm ad}(\gamma e_\alpha))\mid\gamma\in K\}\leqs {\rm Aut}({\mathcal L}_K)^0 = G$$ 
(see \cite[Proposition 4.4.2]{Carter0}). Note that $G = \la U_{\a} \mid \a \in \Phi(G)\ra$. 

Given the lower bound on $p$ in (\ref{e:pbdh}), we can make a similar construction for more general elements of 
${\mathcal L}_\Z$. To do this, let ${\mathcal L}_{\Z_{(p)}}$ be the $\Z_{(p)}$-span of $\mathcal{B}$, where $\Z_{(p)}$ is the localization of $\Z$ at the prime ideal $(p) = p\Z$, so that 
${\mathcal L}_K = {\mathcal L}_{\Z_{(p)}}\otimes_{\Z_{(p)}} K$.
By \cite[Proposition 1.5]{T} we have 
$$\left({\rm ad}(e)^j/j!\right)({\mathcal L}_{\Z_{(p)}}) \subseteq {\mathcal L}_{\Z_{(p)}}$$
for all $e\in\sum_{\alpha\in\Phi^+(G)}\Z e_\alpha$ and all $j\geqs 0$. Then as in the Chevalley construction, for any non-zero element $y$ in $\sum_{\alpha\in\Phi^+(G)}\Z e_\alpha$ or $\sum_{\alpha\in\Phi^+(G)}\Z f_\alpha$, we can produce
$$x_y({\bf x}) = {\rm exp}({\rm ad}({\bf x} y))\in{\rm GL}_{\dim G}(\Z_{(p)}[{\bf x}]).$$
In particular, by passing to $K$, we define 
\begin{equation}\label{e:xyy}
U_y = \{x_y(\gamma)={\rm exp}({\rm ad}(\gamma y)) \mid \gamma\in K \} \subseteq {\rm Aut}({\mathcal L}_K)^0 = G.
\end{equation}

We will use this general set-up to  construct certain $A_1$-type subgroups of our group $G$, following \cite{T92,T}. In order to state the main result (Proposition \ref{p:a1key} below), recall that an ordered triple of elements $(e,h,f)$ 
chosen from $\mathcal L_K$ (or from ${\mathcal L}_{\Z}$) is an \emph{$\mathfrak{sl}_2$-triple} if the elements satisfy the commutation relations between the standard generators of the Lie algebra $\mathfrak{sl}_2$, namely
$$[h,e]=2e,\; [h,f] = -2f, \; [e,f] = h.$$ 

We have the following result (in part (iii), we use the notation $x_y(\gamma)$ from (\ref{e:xyy})).

\begin{prop}\label{p:a1key}
Suppose $p\geqs h(G)$ and $(e,h,f)$ is an $\mathfrak{sl}_2$-triple of ${\mathcal L}_\Z$, with $e=\sum_{i=1}^re_i$ and $f\in\sum_{i=1}^r\Z f_i$. 
Then the following hold:
\begin{itemize}\addtolength{\itemsep}{0.2\baselineskip}
\item[{\rm (i)}]  $U_e$ and $U_f$ are $1$-dimensional subgroups of $G$.
\item[{\rm (ii)}] $A = \langle U_e,U_f\rangle$ is an $A_1$-type subgroup of $G$.
\item[{\rm (iii)}] $T =\{t(c) \mid c\in K^\times\}$ is a maximal torus of $\langle U_e,U_f\rangle$, where 
$$t(c) = x_e(c)x_f(-c^{-1})x_e(c)x_e(-1)x_f(1)x_e(-1),$$
and the map $t:\mathbb{G}_m \to T$ is a morphism of algebraic groups.
\item[{\rm (iv)}] The action of $T$ on the basis  $\{{\bar v} = v\otimes 1 \mid v \in \mathcal{B}\}$ of $\mathcal{L}_K$ is given by 
$$t(c)\cdot {\bar e_\alpha} = c^{\alpha(h)}{\bar e_\alpha},\;\; t(c)\cdot {\bar h}_i = {\bar h}_i$$
for all $\alpha\in\Phi(G)$, $1 \leqs i \leqs r$. Moreover, $\alpha_i(h) = 2$ for all $1\leqs i\leqs r$.
\item[{\rm (v)}] $T$ normalizes $U_e$ and $U_f$.
\item[{\rm (vi)}] $U_e$ contains a regular unipotent element of $G$.
\end{itemize}
\end{prop}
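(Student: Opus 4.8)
The plan is to establish the six parts of Proposition \ref{p:a1key} by exploiting the standard theory of $\mathfrak{sl}_2$-triples over $\mathbb{Z}$ together with the exponential construction recorded in (\ref{e:xyy}), reducing everything to a careful bookkeeping of the adjoint action. First I would record the key consequence of the hypothesis $p \geqs h(G)$: since $e = \sum_{i=1}^r e_i$ is a regular nilpotent element, the element $h$ of the $\mathfrak{sl}_2$-triple is the dual of the sum of positive coroots, and one computes $\a_i(h) = 2$ for every simple root $\a_i$ directly from the defining relation $[h,e_i] = 2e_i$ (this gives the last assertion of part (iv)). The bound $p \geqs h$ ensures that the eigenvalues $\a(h)$ of $\mathrm{ad}(h)$, which range over even integers with $|\a(h)| \leqs 2(h-1) < 2p$, remain distinct modulo $p$ enough to avoid degeneracy when we pass from $\mathcal{L}_{\mathbb{Z}_{(p)}}$ to $\mathcal{L}_K$; this is precisely the input needed to invoke \cite[Proposition 1.5]{T} and guarantee that the divided-power exponentials $x_e(\mathbf{x})$ and $x_f(\mathbf{x})$ are well-defined integrally at $p$, which is what makes $U_e$ and $U_f$ genuine $1$-dimensional unipotent subgroups over $K$, giving part (i).

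For part (ii), I would cite the construction of Testerman \cite{T92,T}, whose main theorem says that exponentiating an $\mathfrak{sl}_2$-triple under the stated hypotheses produces a closed connected $A_1$-type subgroup $A = \la U_e, U_f\ra$; the point is that the structure constants of $\mathfrak{sl}_2$ survive reduction mod $p$ because $p \geqs h$ prevents the relevant binomial coefficients from vanishing, so the Serre/Chevalley relations defining $\mathrm{SL}_2$ hold among the one-parameter subgroups. Part (iii) is then a matter of verifying that the specific word
$$t(c) = x_e(c)\,x_f(-c^{-1})\,x_e(c)\,x_e(-1)\,x_f(1)\,x_e(-1)$$
computes, inside the abstract $\mathrm{SL}_2(K)$ mapping onto $A$, to the diagonal torus element $\mathrm{diag}(c,c^{-1})$; this is the familiar identity expressing a diagonal matrix as a product of two copies of the Weyl-group-representing element $w = x_e(1)x_f(-1)x_e(1)$ evaluated at $c$ and at $1$, so it follows from the $2\times 2$ matrix identities and the fact that $t$ is manifestly a morphism $\mathbb{G}_m \to T$ because each factor $x_z(\gamma)$ depends morphically on $\gamma$.

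For part (iv), once $T$ is identified with the image of the diagonal torus, the action on the Chevalley basis is computed from the eigenvalue decomposition of $\mathrm{ad}(h)$: since $t(c)$ acts on a weight vector of $\mathrm{ad}(h)$-weight $\a(h)$ as $c^{\a(h)}$, and $e_\a, f_\a = e_{-\a}$ are $\mathrm{ad}(h)$-eigenvectors with eigenvalues $\a(h)$ and $-\a(h) = (-\a)(h)$ respectively while the Cartan elements $h_i$ lie in the zero weight space, the displayed formulae $t(c)\cdot \bar e_\a = c^{\a(h)}\bar e_\a$ and $t(c)\cdot \bar h_i = \bar h_i$ drop out; I would note this is the integral analogue of diagonalizing $\mathrm{ad}(h)$ and that it descends to $K$ because $\mathrm{ad}(h)$ acts with integer (hence well-defined mod $p$) eigenvalues. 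Parts (v) and (vi) are then short: $T$ normalizes $U_e$ and $U_f$ because $t(c)\,x_e(\gamma)\,t(c)^{-1} = x_e(c^2\gamma)$ and similarly $t(c)\,x_f(\gamma)\,t(c)^{-1} = x_f(c^{-2}\gamma)$, which follows from part (iv) applied to the $\mathrm{ad}(h)$-eigenvalue $2$ of $e$ (recall $\a_i(h) = 2$ forces $e$ to be a weight-$2$ vector) and eigenvalue $-2$ of $f$; and part (vi) holds because $U_e$ is the exponential of the regular nilpotent $\mathrm{ad}(e)$, so its nontrivial elements are regular unipotent in $G$, as $\dim C_G(x_e(\gamma)) = \dim \ker(\mathrm{ad}(e)) = r$ for $\gamma \ne 0$. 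The main obstacle I anticipate is part (i)/(ii): making rigorous that the divided-power exponentials remain integral and that the $\mathrm{SL}_2$ relations survive reduction mod $p$, which is exactly where the hypothesis $p \geqs h(G)$ is essential and where I would lean most heavily on \cite[Proposition 1.5]{T} and the results of \cite{T92,T} rather than redoing the estimates.
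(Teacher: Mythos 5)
Your argument is correct and follows essentially the same route as the paper, which simply cites Lemmas 1 and 2 of \cite{T92} together with Lemma 1.2 of \cite{T}; you lean on the same sources and additionally sketch the standard computations (the Chevalley identity $t(c)=n(c)n(1)^{-1}$, the $\mathrm{ad}(h)$-eigenvalue bookkeeping, and the integrality of divided powers from \cite[Proposition 1.5]{T}) that those lemmas encapsulate.
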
 

\begin{proof}
This follows by combining Lemmas 1 and 2 in \cite{T92} with Lemma 1.2 in \cite{T}.
 \end{proof}

The following result will play an important role in the proof of Theorem \ref{t:main}.

\begin{prop}\label{p:new1}
Suppose $p\geqs h(G)$ and $(e,h,f)$ is an $\mathfrak{sl}_2$-triple of ${\mathcal L}_\Z$, with $e=\sum_{i=1}^re_i$ and $f\in\sum_{i=1}^r\Z f_i$. Let $W$ be the $3$-dimensional subalgebra of ${\mathcal L}_K$ generated by $\{e,f\}$ and let $H$ be the stabilizer of $W$ in $G$. Then $H$ is an $A_1$-type subgroup of $G$.
\end{prop}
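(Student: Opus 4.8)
The plan is to exhibit explicitly an $A_1$-type subgroup of $G$ that both stabilizes $W$ and accounts for all of $H$, thereby showing $H$ coincides with it (up to the relevant identity-component considerations). By Proposition \ref{p:a1key}(ii), the subgroup $A = \langle U_e, U_f\rangle$ is an $A_1$-type subgroup of $G$, and its maximal torus $T = \{t(c) \mid c \in K^\times\}$ acts on the Chevalley basis as described in part (iv). First I would observe that $A$ stabilizes $W$: indeed $W = \langle e, f\rangle$ is the image in $\mathcal{L}_K$ of the $\mathfrak{sl}_2$-triple $(e,h,f)$, and since $A$ is generated by the one-parameter unipotent groups $U_e$ and $U_f$ obtained by exponentiating $\mathrm{ad}(e)$ and $\mathrm{ad}(f)$, each of these preserves the $\mathfrak{sl}_2$-subalgebra spanned by $e,h,f$ (the exponential of an inner derivation of a subalgebra preserves that subalgebra). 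Hence $A \leqs H$, and in particular $\dim H \geqs 3$.

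The substance of the argument is then the reverse inclusion at the level of dimensions, namely $\dim H \leqs 3$, which forces $H^0 = A$. The approach I would take is to analyze the action of $H$ on the $3$-dimensional space $W$ itself. The stabilizer $H$ acts on $W$ via a homomorphism $H \to \mathrm{GL}(W)$; because $W$ carries the Lie bracket of $\mathcal{L}_K$ restricted from $G$, and this bracket is $G$-invariant, $H$ in fact acts through $\mathrm{Aut}(W)$ as a Lie algebra, i.e.\ through automorphisms of $\mathfrak{sl}_2$. Since $p \geqs h(G) \geqs 2$ is good, $\mathrm{Aut}(\mathfrak{sl}_2) \cong \mathrm{PGL}_2(K)$ is itself an $A_1$-type group of dimension $3$. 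The kernel $N$ of the map $H \to \mathrm{Aut}(W)$ is the subgroup of $G$ fixing $W$ pointwise. The key point will be to show $N$ is finite: an element of $N$ centralizes $e$, $h$ and $f$, and in particular centralizes the regular nilpotent element $e = \sum_i e_i$ of $\mathcal{L}_K$ (using that $p$ is good, so $C_G(e)$ is abelian unipotent of dimension $r$) together with the semisimple $h$ with $\alpha_i(h) = 2$ for all $i$; combining these two centralizing conditions cuts $\dim N$ down to zero. Granting $\dim N = 0$, we get $\dim H \leqs \dim \mathrm{Aut}(\mathfrak{sl}_2) = 3$, and with $A \leqs H$ already of dimension $3$ we conclude $H^0 = A$ is an $A_1$-type subgroup.

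The step I expect to be the main obstacle is the finiteness of the pointwise stabilizer $N$, i.e.\ showing that no positive-dimensional subgroup of $G$ centralizes the entire triple $(e,h,f)$. The cleanest route is to note that $N \leqs C_G(e)$, and since $e$ is regular nilpotent in good characteristic, $C_G(e)$ is a connected abelian unipotent group of dimension $r$ (this is the defining property of regularity, exploited throughout via $p \geqs h$). One then intersects with $C_G(h)$: the torus element data $\alpha_i(h) = 2$ from Proposition \ref{p:a1key}(iv) shows that conjugation by $t(c)$ scales each $\overline{e_\alpha}$ by $c^{\alpha(h)}$ with $\alpha(h) > 0$ for $\alpha \in \Phi^+(G)$, so the only elements of the unipotent group $C_G(e)$ fixed by this torus action are trivial, forcing $N^0 = 1$. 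A slightly more conceptual alternative is to invoke Proposition \ref{p:a1key}(iii)–(v): $T$ normalizes $U_e$ with $\alpha(h)>0$ on positive roots, which pins down $C_G(e) \cap C_G(h)$ directly. Either way, once $\dim N = 0$ is established the conclusion is immediate, so this is where I would concentrate the careful case-free computation.
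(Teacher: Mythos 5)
Your argument is correct in outline but takes a genuinely different route from the paper. The paper's proof is top--down: having observed $A=\langle U_e,U_f\rangle\leqs H$, it passes to a maximal closed positive-dimensional overgroup $M_1$ of $H$, uses \cite[Theorem 1.2]{TZ} and Borel--Tits to see that $M_1$ is reductive, and then invokes the Saxl--Seitz classification \cite[Theorem A]{SS} of maximal reductive subgroups containing a regular unipotent element: either $M_1$ is of type $A_1$, forcing $A=H=M_1$, or $G=E_6$ and $M_1=F_4$, which is excluded because $F_4$ stabilizes no $3$-dimensional subspace of ${\mathcal L}_K$, and one repeats the argument inside $F_4$. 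Your proof is bottom--up: a dimension count via the restriction map $H\to \mathrm{Aut}(W)\cong \mathrm{PGL}_2(K)$ together with finiteness of the pointwise stabilizer $N$; this avoids the classification in \cite{SS} entirely, at the cost of some centralizer computations. Two points need tightening. First, your mechanism for $\dim N=0$ conflates the centralizer of the Lie algebra element $h$ with the fixed points of conjugation by the torus $T$; the clean statement is that $\mathrm{Lie}(N)\subseteq C_{{\mathcal L}_K}(e)\cap \ker(\mathrm{ad}\,h)=0$, since $\mathrm{ad}(h)$ acts on the $r$-dimensional space $C_{{\mathcal L}_K}(e)$ with the eigenvalues $d_1,\ldots,d_r$ of Table \ref{tab:di}, all nonzero modulo $p$ because $d_i=2m_i$ with $1\leqs m_i\leqs h-1<p$. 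Second, the proposition asserts that $H$ itself, not merely $H^0$, is an $A_1$-type subgroup, and connectedness does not follow from $\dim N=0$ alone. This is easily repaired: $N\leqs C_G(e)\cap C_G(f)$, and these centralizers lie in the unipotent radicals of two opposite Borel subgroups (each being the centralizer of a regular unipotent element, hence contained in the unique Borel subgroup containing that element), so $N=1$; since $U_e$ and $U_f$ already generate all of $\mathrm{Aut}(W)$ upon restriction to $W$, one concludes $H=AN=A$ exactly. With these two repairs your argument is a complete and more self-contained alternative to the proof in the paper.
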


\begin{proof} 
Let $A$ be the $A_1$-type subgroup of $G$ constructed in Proposition \ref{p:a1key}(ii). Note that $A$ contains a regular unipotent element and it clearly stabilizes $W$ by construction, so $A \leqs H$. Let $M_1$ be a maximal closed positive dimensional subgroup of $G$ with $H \leqs M_1$. By the main theorem of \cite{TZ}, $A$ is not contained in a proper parabolic subgroup of $G$, so Borel-Tits \cite[Corollary 3.9]{BT} (also see Weisfeiler \cite{Weis}) implies that $M_1$ is reductive. By \cite[Theorem A]{SS}, either $M_1$ is an $A_1$-type subgroup (and thus $A=H=M_1$), or $G=E_6$ and $M_1 = F_4$. In the latter case, $H \ne M_1$ since $M_1$ does not stabilize a $3$-dimensional subspace of ${\mathcal L}_K$, so let $M_2$ be a maximal closed positive dimensional subgroup of $M_1$ with $H \leqs M_2$. As above, $M_2$ is reductive and by applying \cite[Theorem A]{SS} once again, we conclude that $A=H=M_2$.
 \end{proof}

We would like to be able to use Proposition \ref{p:new1} to identify the stabilizers of other $\mathfrak{sl}_2$-subalgebras of $\mathcal{L}_K$. With this aim in mind, we present Proposition \ref{p:new2} below. In order to state this result, we need some additional notation. 

Suppose we have an $\mathfrak{sl}_2$-triple $(e,h,f)$ as in Proposition \ref{p:a1key}. Let $T$ be the $1$-dimensional torus constructed in part (iii) of the proposition. Let $\alpha_0 \in \Phi(G)$ be the highest root and recall that $h(G) = {\rm ht}(\a_0)+1$, where ${\rm ht}:\Phi(G)\to \mathbb N$ is the familiar height function (that is, if $\a = \sum_{i}a_i\a_i$ then ${\rm ht}(\a) = \sum_{i}a_i$). Then
$$\{2i \mid -{\rm ht}(\alpha_0) \leqs i \leqs {\rm ht}(\alpha_0)\}$$
is the set of weights of $T$ on both ${\mathcal L}_\Z$ and ${\mathcal L}_K$. For each $T$-weight $m$, write 
$({\mathcal L}_\Z)_m$ for the corresponding $T$-weight space and similarly for ${\mathcal L}_K$. In both the statement and proof of the following result, we use the notation $\bar{a} = a \otimes 1\in {\mathcal L}_K$ for $a \in {\mathcal L}_\Z$.

\begin{prop} \label{p:new2}
Suppose $p \geqs h(G)$ and $(e,h,f)$ is an $\mathfrak{sl}_2$-triple of ${\mathcal L}_\Z$, with $e=\sum_{i=1}^r e_i$ 
and $f\in\sum_{i=1}^r\Z f_i$. Suppose $y\in ({\mathcal L}_\Z)_{p-1}\cap C_{{\mathcal L}_\Z}(e)$ and
$z\in({\mathcal L}_\Z)_{p-3}$ are chosen so that 
\begin{itemize}\addtolength{\itemsep}{0.2\baselineskip}
\item[{\rm (i)}] $[y,z] = 0$ in ${\mathcal L}_\Z$; and
\item[{\rm (ii)}] $({\bar e},{\bar{h}+\gamma\bar{ y}},\bar{f}+\gamma\bar{ z})$ is an $\mathfrak{sl}_2$-triple 
in ${\mathcal L}_K$ for some $\gamma\in K$.
\end{itemize}
Then there exists $g\in C_G({\bar e})$ such that $g \cdot {\bar h}={\bar{h}+\gamma \bar{y}}$ and 
$g \cdot {\bar f} = {\bar{f}+\gamma \bar{z}}$ in ${\mathcal L_K}$. Moreover, the stabilizer in $G$ of the 
subalgebra $W$ of $\mathcal{L}_K$ generated by $\{{\bar e}, {\bar{f}+\gamma \bar{z}}\}$ is an $A_1$-type 
subgroup. 
\end{prop}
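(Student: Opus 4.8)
The plan is to produce the conjugating element $g$ explicitly as a one-parameter unipotent element built from $y$, and then deduce the assertion about $W$ by transporting the subalgebra $\langle \bar e,\bar f\rangle$ of Proposition \ref{p:new1}. First I would record that, since $\alpha_i(h)=2$ for all $i$ by Proposition \ref{p:a1key}(iv), the $T$-weight of $e_\alpha$ equals $2\,{\rm ht}(\alpha)$; hence the weight space $({\mathcal L}_\Z)_{p-1}$ containing $y$ is spanned by those $e_\alpha$ with ${\rm ht}(\alpha)=(p-1)/2$, so $y\in\sum_{\alpha\in\Phi^{+}(G)}\Z e_\alpha$. This is precisely the situation in which the unipotent element $x_y(\gamma)={\rm exp}({\rm ad}(\gamma\bar y))$ of (\ref{e:xyy}) is defined, and I would set $g=x_y(\gamma)\in G$. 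We may assume $\gamma\neq 0$ and $y\neq 0$, since otherwise the relations below force $\bar z=0$, giving $W=\langle\bar e,\bar f\rangle$, and we may take $g=1$ and appeal directly to Proposition \ref{p:new1}. As $y\in C_{{\mathcal L}_\Z}(e)$ we have $[\bar y,\bar e]=0$, so $g\cdot\bar e=\bar e$ and therefore $g\in C_G(\bar e)$.

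Next I would extract the bracket relations hidden in hypothesis (ii). Expanding the $\mathfrak{sl}_2$-triple relations for $(\bar e,\bar h+\gamma\bar y,\bar f+\gamma\bar z)$ and using $[\bar y,\bar e]=0$ and $[\bar y,\bar z]=0$ (hypothesis (i)), together with the $T$-weights of $y$ and $z$, which give $[\bar h,\bar y]=(p-1)\bar y=-\bar y$ and $[\bar h,\bar z]=(p-3)\bar z=-3\bar z$ in characteristic $p$, the relation $[\bar h+\gamma\bar y,\bar f+\gamma\bar z]=-2(\bar f+\gamma\bar z)$ reduces to $[\bar y,\bar f]=\bar z$, while $[\bar e,\bar f+\gamma\bar z]=\bar h+\gamma\bar y$ gives $[\bar e,\bar z]=\bar y$.

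With these identities in hand the exponential series defining $g$ terminates at first order: using $[\bar y,\bar h]=\bar y$ one has ${\rm ad}(\gamma\bar y)^2(\bar h)=\gamma^2[\bar y,\bar y]=0$, and using $[\bar y,\bar f]=\bar z$ together with hypothesis (i) one has ${\rm ad}(\gamma\bar y)^2(\bar f)=\gamma^2[\bar y,\bar z]=0$. Hence
\[
g\cdot\bar h=\bar h+\gamma[\bar y,\bar h]=\bar h+\gamma\bar y,\qquad g\cdot\bar f=\bar f+\gamma[\bar y,\bar f]=\bar f+\gamma\bar z,
\]
which is the first assertion. For the final statement, since $g$ is a Lie algebra automorphism of ${\mathcal L}_K$ it carries the subalgebra $W_0=\langle\bar e,\bar f\rangle$ onto $\langle g\cdot\bar e,g\cdot\bar f\rangle=\langle\bar e,\bar f+\gamma\bar z\rangle=W$. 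By Proposition \ref{p:new1} the stabilizer $H_0=\mathrm{Stab}_G(W_0)$ is an $A_1$-type subgroup, so $\mathrm{Stab}_G(W)=\mathrm{Stab}_G(g\cdot W_0)=gH_0g^{-1}$ is a conjugate of an $A_1$-type subgroup, and hence is itself of type $A_1$.

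I expect the only genuine obstacle to be the opening step: recognising that the correct conjugating element is the unipotent $x_y(\gamma)$ attached to the weight vector $y$, and verifying carefully that it is a bona fide element of $G$ lying in $C_G(\bar e)$ (this rests on $y$ being a combination of positive root vectors and on $[\bar y,\bar e]=0$). Once $g$ is identified, everything else is the bookkeeping verification above, in which hypotheses (i) and (ii) are exactly what force the exponential to collapse to its linear term.
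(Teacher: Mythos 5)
Your proposal is correct and follows essentially the same route as the paper: take $g=x_y(\gamma)={\rm exp}({\rm ad}(\gamma\bar y))\in C_G(\bar e)$, extract $[\bar y,\bar f]=\bar z$ from the $\mathfrak{sl}_2$-relations together with the $T$-weights of $y$ and $z$, observe that the exponential collapses to its linear term, and transport the conclusion of Proposition \ref{p:new1} by conjugation. The only cosmetic difference is that the paper kills the terms of degree $i\geqs 3$ by the weight bound ${\rm ad}(y)^i(f)\in({\mathcal L}_\Z)_{i(p-1)-2}=0$ (which also disposes of the divided-power terms with $i\geqs p$), whereas you argue via ${\rm ad}(\bar y)^2(\bar f)=\gamma^{-2}\cdot\gamma^2[\bar y,\bar z]=0$; both are fine.
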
 

\begin{proof} 
First observe that $y \in \sum_{\a \in \Phi^+(G)}\Z e_{\a}$ since $y \in (\mathcal{L}_{\Z})_{p-1}$, so we can take $g = x_y(\gamma)\in G$ as in (\ref{e:xyy}). Note that $g \in C_G({\bar e})$ since $y\in C_{{\mathcal L}_\Z}(e)$. Now $y$ is an eigenvector for ${\rm ad}(h)$ (since $y$ is a $T$-weight vector), so $[y,[y,h]]=0$ and thus
$$g \cdot {\bar h} = {\bar h}+\gamma [{\bar y}, {\bar h}] = {\bar h}-\gamma[{\bar h},{\bar y}] = 
{\bar h}-\gamma(p-1){\bar y} = {\bar h}+\gamma{\bar y}.$$ 
The maximum $T$-weight in ${\mathcal L_\Z}$ is $2{\rm ht}(\alpha_0)$, which is at most $2(p-1)$ since $p \geqs h(G)>{\rm ht}(\alpha_0)$, so ${\rm ad}(y)^i(f)\in ({\mathcal L}_\Z)_{i(p-1)-2} = 0$ for all $i\geqs 3$ and thus
$$g \cdot {\bar f} = {\bar f}+\gamma[{\bar y},{\bar f}] + \frac{1}{2}\gamma^2[{\bar y},[{\bar y},{\bar f}]].$$
In addition, since $[y,z]=0$ and $z \in (\mathcal{L}_{\Z})_{p-3}$, we have 
\begin{equation}\label{e:dis}
[h+y,f+z] = -2f+(p-3)z+[y,f].
\end{equation}
The $\mathfrak{sl}_2$ commutation relations imply that $[{\bar h}+{\bar y},{\bar f}+{\bar z}] = -2({\bar f}+{\bar z})$, 
which is equal to $-2{\bar f}-3{\bar z}+[{\bar y},{\bar f}]$ by (\ref{e:dis}). Therefore, 
$[{\bar y},{\bar f}] = {\bar z}$ and thus $[{\bar y},[{\bar y},{\bar f}]] = 0$. We conclude that $g \cdot {\bar h} = {\bar{h}+\gamma\bar{y}}$ and 
$g \cdot {\bar f} = {\bar{f}+\gamma\bar{z}}$ in ${\mathcal L}_K$, as required. The final statement concerning the stabilizer of $W$ follows immediately from Proposition \ref{p:new1}.
 \end{proof}

\subsection{Exponentiation}\label{ss:exp}

In this section we turn to a different notion of ``exponentiation'',  following Seitz \cite{Seitz}. As before, let $G$ be a simple exceptional algebraic group of adjoint type over an algebraically closed field $K$ of characteristic $p>0$ and let $r$ and $h$ denote the rank and Coxeter number of $G$, respectively. 
Let $U = \la U_{\a} \mid \a \in \Phi^{+}(G)\ra$ be the unipotent radical of a fixed Borel subgroup $B$ of $G$ corresponding to our choice of base 
$\Pi(G) = \{\a_1, \ldots, \a_r\}$, where the root subgroup $U_{\a}$ is defined as in (\ref{e:xyy}). As explained in \cite[Section 5]{Seitz}, we may view ${\rm Lie}(U)$ as an algebraic group via the Hausdorff formula. Set $V = {\rm Lie}(G)$.

We start by recalling \cite[Proposition 5.3]{Seitz}. 

\begin{prop}\label{l:exp00}
Suppose $p \geqs h$. Then there exists a unique isomorphism of algebraic groups
\begin{equation}\label{e:exp}
\theta : {\rm Lie}(U) \to U
\end{equation}
whose tangent map is the identity and which is $B$-equivariant; that is, $\theta(b \cdot n) = b\theta(n)b^{-1}$ for all $n \in{\rm Lie}(U)$, $b \in B$.
\end{prop}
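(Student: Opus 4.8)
The plan is to realise $\theta$ as a genuine exponential map and then check the four required properties, reserving uniqueness as the delicate point. The key arithmetic input is that ${\rm ad}(n)$ is nilpotent for every $n \in {\rm Lie}(U)$, with ${\rm ad}(n)^{h-1}=0$: an iterated bracket raises height by at least one at each step, and the maximal height of a root is ${\rm ht}(\a_0)=h-1$. Since $p \geqs h$, every integer $j$ with $1 \leqs j \leqs h-1$ is invertible in $K$, so all the factorials appearing below are units. First I would define
$$\theta(n) = {\rm exp}({\rm ad}(n)) = \sum_{j\geqs 0}\frac{1}{j!}\,{\rm ad}(n)^j \in {\rm GL}({\mathcal L}_K),$$
a finite sum which is a morphism of varieties in $n$. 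A formal identity, valid because the relevant denominators are invertible, shows that ${\rm exp}({\rm ad}(n))$ preserves the Lie bracket and so lies in ${\rm Aut}({\mathcal L}_K)^0=G$; tracking root heights then shows the image lies in $U$, so $\theta \colon {\rm Lie}(U) \to U$ is a morphism of varieties.

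To see that $\theta$ is an isomorphism of algebraic groups I would invoke the Baker--Hausdorff formula ${\rm exp}({\rm ad}(X))\,{\rm exp}({\rm ad}(Y)) = {\rm exp}({\rm ad}(H(X,Y)))$, in which $H(X,Y)$ is the Hausdorff series; by nilpotency this is a finite Lie polynomial, and its coefficients up to degree $h-1$ have denominators divisible only by primes $<h \leqs p$, hence are units in $K$. Since the group law on ${\rm Lie}(U)$ furnished by the Hausdorff formula is exactly $X * Y = H(X,Y)$, this gives $\theta(X*Y)=\theta(X)\theta(Y)$, and the (finite) logarithm series provides a two-sided inverse, so $\theta$ is an isomorphism. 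As $\theta(n) = 1 + {\rm ad}(n) + \cdots$, its tangent map is the identity. For $B$-equivariance I would use that ${\rm Ad}(b)$ is a Lie algebra automorphism for each $b \in B$, together with the naturality relation ${\rm exp}({\rm ad}({\rm Ad}(b)n)) = {\rm Ad}(b)\,{\rm exp}({\rm ad}(n))\,{\rm Ad}(b)^{-1}$; since $U \normal B$, conjugation by $b$ in $G$ restricts to conjugation on $U$, yielding $\theta(b\cdot n) = b\,\theta(n)\,b^{-1}$.

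The hard part is uniqueness, and this is where I expect the real work to lie. Given a second morphism $\theta'$ with the same properties, $\psi = \theta^{-1}\circ\theta'$ is a $B$-equivariant automorphism of the algebraic group ${\rm Lie}(U)$ with $d\psi = {\rm id}$, and the goal is to force $\psi = {\rm id}$. The tangent normalisation is genuinely needed, since for any scalar $\l \in K^{\times}$ the rescaled map $n \mapsto \theta(\l n)$ is still $B$-equivariant but has tangent map $\l \cdot {\rm id}$. I would argue by descending induction along the lower central series of $U$, whose successive quotients are $T$-modules decomposing into the one-dimensional root spaces $Ke_{\a}$, with pairwise distinct $T$-weights. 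On the centre $Ke_{\a_0} = Z({\rm Lie}(U))$, $T$-equivariance forces $\psi$ to act as a scalar and $d\psi = {\rm id}$ forces that scalar to be $1$; passing to the quotient ${\rm Lie}(U)/Ke_{\a_0}$, whose induced automorphism satisfies the same hypotheses, the inductive step pins down $\psi$ on each successive layer, with the cross-terms controlled by equivariance under the unipotent part of $B$. This gives $\psi = {\rm id}$, hence $\theta' = \theta$.
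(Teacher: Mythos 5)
The paper does not actually prove this statement: it is quoted verbatim from \cite[Proposition 5.3]{Seitz}, so your proposal has to be measured against Seitz's argument. Your overall strategy (exponentiate ${\rm ad}(n)$, use Baker--Campbell--Hausdorff for the homomorphism property, and a weight argument for uniqueness) is the right one, but your very first step contains an error that invalidates the construction for exactly the primes this paper cares about. You claim ${\rm ad}(n)^{h-1}=0$ because iterated brackets raise height. That is true for the action of ${\rm ad}(n)$ on ${\rm Lie}(U)$ itself, but $\theta(n)$ must be an automorphism of all of $\mathcal{L}_K={\rm Lie}(G)$ (this is how $G={\rm Aut}(\mathcal{L}_K)^0$ and $U\leq G$ are realised, and you explicitly work in ${\rm GL}(\mathcal{L}_K)$), where heights of weight vectors run from $-(h-1)$ to $h-1$. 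Hence one only gets ${\rm ad}(n)^{2h-1}=0$, and this is sharp for a regular nilpotent element, whose adjoint Jordan blocks in characteristic $0$ have sizes $2d_i-1$ with $\max_i d_i=h$. Consequently the sum $\sum_j \frac{1}{j!}{\rm ad}(n)^j$ contains terms with $j$ up to $2h-2$, and whenever $h\leq p\leq 2h-2$ the coefficients $1/j!$ for $p\leq j\leq 2h-2$ do not exist in $K$. Every borderline case in this paper --- $(G_2,7)$, $(F_4,13)$, $(E_8,31)$, and so on --- lies in this range, so your formula for $\theta$ does not parse there. The actual content of the hypothesis $p\geq h$ is the integrality statement that the divided powers ${\rm ad}(e)^j/j!$, computed over $\mathbb{Q}$, preserve the localised Chevalley lattice $\mathcal{L}_{\Z_{(p)}}$ even though $1/j!\notin\Z_{(p)}$; this is \cite[Proposition 1.5]{T}, quoted in Section \ref{ss:a1}, and the corresponding input in Seitz's Section 5. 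One must define $\theta$ by reducing these integral operators modulo $p$, not by summing $1/j!\,{\rm ad}(n)^j$ term by term. The same defect infects your verification of the homomorphism property, since checking $\exp(A)\exp(B)=\exp(H(A,B))$ for operators of nilpotency degree up to $2h-1$ again requires expanding exponentials past degree $p$.

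On uniqueness, your reduction to showing that a $B$-equivariant automorphism $\psi$ of the Hausdorff group ${\rm Lie}(U)$ with $d\psi={\rm id}$ is the identity is the right framing, and the observation that the tangent normalisation is genuinely needed is a good one. But the induction along the lower central series is asserted rather than carried out: $T$-equivariance alone does not force $\psi$ to be linear (the coordinate of weight $\a$ may a priori contain any monomial $\prod e_{\b_i}^{m_i}$ with $\sum_i m_i\b_i=\a$), so the interplay between the group law, the tangent condition and equivariance under the unipotent part of $B$ has to be worked through layer by layer. That part is an incomplete sketch; the decisive gap, however, is the one in the construction above.
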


Suppose $G$ contains a regular unipotent element $x$ of order $p$, so $p \geqs h$ and we are in a position to use Proposition \ref{l:exp00} to study the structure of $C_G(x)$. Replacing $x$ by a suitable conjugate, we may assume that 
$$x = x_{e}(1) = {\rm exp}({\rm ad}(e)),$$ 
where $e = \sum_{i=1}^re_i$. As in Proposition \ref{p:a1key}, let $A$ be an $A_1$-type subgroup of $G$ containing $x$, and let $T = \{t(c) \mid c \in K^{\times}\}$ be the given maximal torus of $A$. Without loss of generality, we may assume that $T$ is contained in the Borel subgroup $B$ defined above. From the description of the action of $A$ on $V = {\rm Lie}(G)$ in the proof of Proposition \ref{p:11}, it follows that $t(c)$ acts on the $1$-eigenspace $C_V(x) = {\rm Lie}(C_G(x))$ as 
\begin{equation}\label{e:dii}
{\rm diag}(c^{d_1},\ldots,c^{d_r}),
\end{equation}
where the $d_i$ are recorded in Table \ref{tab:di} (we label the $d_i$ so that they form a decreasing sequence). 

\begin{table}
$$\begin{array}{ll} \hline
G & d_i \\ \hline
G_2 & 10, 2 \\
F_4 & 22, 14, 10, 2 \\
E_6 &  22, 16, 14, 10, 8, 2 \\
E_7 &  34, 26, 22, 18, 14, 10, 2 \\
E_8 &  58, 46, 38, 34, 26, 22, 14, 2 \\ \hline
\end{array}$$
\caption{The integers $d_1, \ldots, d_r$ in (\ref{e:dii})}
\label{tab:di}
\end{table}

\begin{prop}\label{l:exp1}
Let $x = x_e(1) \in G$ be a regular unipotent element of order $p$, where $e = \sum_{i=1}^{r}e_i$, and let $T = \{t(c) \mid c \in K^{\times}\}$ be the torus constructed in Proposition \ref{p:a1key}. Then there exist connected $1$-dimensional unipotent subgroups $X_{i} = \{x_{i}(\gamma) \mid \gamma \in K\}$ such that the following hold:
\begin{itemize}\addtolength{\itemsep}{0.2\baselineskip}
\item[{\rm (i)}]   $C_G(x) = \la X_{i} \mid 1 \leqs i \leqs r \ra$. In particular, each $z \in C_G(x)$ can be written as a commuting product of the form $z = \prod_{i=1}^{r}x_{i}(\gamma_i)$ for some $\gamma_i \in K$. 
\item[{\rm (ii)}] We have  
\begin{equation}\label{e:tac}
t(c)x_i(\gamma)t(c)^{-1} = x_i(c^{d_i}\gamma)
\end{equation}
for all $c \in K^{\times}$, $\gamma \in K$, $1 \leqs i \leqs r$.
\end{itemize}
\end{prop}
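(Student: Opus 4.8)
The plan is to construct the subgroups $X_i$ explicitly using the exponentiation isomorphism $\theta$ of Proposition \ref{l:exp00}, and then read off the torus action from the $B$-equivariance of $\theta$ together with the known action of $T$ on $C_V(x)$. First I would observe that since $x = x_e(1) = \exp(\operatorname{ad}(e))$ with $e = \sum_i e_i$, the centralizer $C_G(x)$ is a connected unipotent group of dimension $r$ (as $x$ is regular), and by Proposition \ref{l:exp00} the map $\theta: \operatorname{Lie}(U) \to U$ restricts to an isomorphism of algebraic groups from $C_V(x) = \operatorname{Lie}(C_G(x))$ onto $C_G(x)$. Here I am using that $C_V(x) = C_{\operatorname{Lie}(U)}(x)$ sits inside $\operatorname{Lie}(U)$, which holds because $x$ is regular and its fixed space on $V$ lies in the nilpotent part $\langle e_\alpha \mid \alpha \in \Phi^+(G)\rangle$.

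Next, I would diagonalize the $T$-action on $C_V(x)$. By the discussion preceding the proposition (equation (\ref{e:dii}) and Table \ref{tab:di}), the torus $T$ acts on the $r$-dimensional space $C_V(x)$ as $\operatorname{diag}(c^{d_1}, \ldots, c^{d_r})$, so I can choose a basis $v_1, \ldots, v_r$ of $C_V(x)$ consisting of $T$-weight vectors, with $t(c) \cdot v_i = c^{d_i} v_i$. I would then \emph{define} the one-parameter subgroups by $x_i(\gamma) = \theta(\gamma v_i)$, and set $X_i = \{x_i(\gamma) \mid \gamma \in K\}$. Since $\theta$ is an isomorphism of algebraic groups whose source is the abelian group $C_V(x)$ (abelian because $C_G(x)$ is abelian, $x$ being regular), each $X_i$ is a connected $1$-dimensional unipotent subgroup, and $C_G(x) = \theta(C_V(x)) = \langle X_1, \ldots, X_r\rangle$ with every element expressible as a commuting product $\prod_i x_i(\gamma_i)$; this gives part (i).

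For part (ii), I would exploit the $B$-equivariance of $\theta$. Since $T \leqs B$ normalizes $C_G(x)$ and also acts on $\operatorname{Lie}(U)$ via the adjoint action, the equivariance property $\theta(b \cdot n) = b\,\theta(n)\,b^{-1}$ applied with $b = t(c)$ and $n = \gamma v_i$ yields
\[
t(c)\, x_i(\gamma)\, t(c)^{-1} = t(c)\, \theta(\gamma v_i)\, t(c)^{-1} = \theta\big(t(c)\cdot (\gamma v_i)\big) = \theta(c^{d_i}\gamma\, v_i) = x_i(c^{d_i}\gamma),
\]
which is exactly (\ref{e:tac}).

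The main obstacle, and the point requiring the most care, is the foundational claim that $\theta$ restricts to an isomorphism $C_V(x) \to C_G(x)$ compatible with the group structures, i.e. that the centralizer in the Lie algebra corresponds under $\theta$ to the centralizer in the group and that $C_V(x)$ is an abelian subalgebra of $\operatorname{Lie}(U)$. The abelianness of $C_V(x)$ follows from the regularity hypothesis (it is the Lie algebra of the abelian group $C_G(x)$), and the correspondence follows because $\theta$ is $B$-equivariant and hence $\langle x\rangle$-equivariant, so it intertwines the fixed-point sets; one must also confirm that $\theta(C_V(x))$ is a subgroup, which is automatic since $\theta$ is a group isomorphism on $\operatorname{Lie}(U)$ restricted to a Lie subalgebra that is a subgroup under the Hausdorff product. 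Once these identifications are in place, parts (i) and (ii) are formal consequences of the functoriality and equivariance of $\theta$, so I expect no serious computation beyond choosing the weight basis.
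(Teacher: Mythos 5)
Your proposal is correct and follows essentially the same route as the paper: both define $x_i(\gamma)=\theta(\gamma v_i)$ for a $T$-weight basis $\{v_1,\ldots,v_r\}$ of the abelian subalgebra $C_V(x)\subseteq {\rm Lie}(U)$ and deduce part (ii) from the $B$-equivariance of $\theta$. The only (harmless) difference is in how $\langle X_i\rangle = C_G(x)$ is obtained: the paper cites the proof of Seitz's Proposition 5.4 to get $X_i\leqs C_G(x)$ and then counts dimensions, whereas you identify $\theta(C_V(x))=C_G(x)$ directly from the ${\rm Ad}(x)$-equivariance of $\theta$.
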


\begin{proof}
First note that $p\geqs h$ since $x$ has order $p$. As above, let 
\[
U = \la U_{\a} \mid \a \in \Phi^{+}(G)\ra
\]
be the unipotent radical of a Borel subgroup $B$ of $G$ and note that $x \in U$ and $T \leqs B$. Moreover, we have $C_G(x) \leqs U$ and thus $C_V(x)  = {\rm Lie}(C_G(x))\subseteq {\rm Lie}(U)$. Choose $v_r \in {\rm Lie}(U)$ such that $\theta(v_r) = x$, where $\theta$ is the map in Proposition \ref{l:exp00}. Extend to a basis $\{v_1, \ldots, v_r\}$ of the $1$-eigenspace $C_V(x)$, where $t(c) \cdot v_i = c^{d_i}v_i$ for each $i$, and construct the corresponding connected $1$-dimensional unipotent subgroups
$$X_i = \{x_i(\gamma)  = \theta(\gamma v_i) \mid \gamma \in K\} \leqs G.$$
Recall that $C_G(x)$ is abelian, so $C_V(x) = {\rm Lie}(C_G(x))$ is an abelian subalgebra and the proof of \cite[Proposition 5.4]{Seitz} implies that each $X_i$ is contained in $C_G(x)$. Therefore, $H = \la X_i \mid 1 \leqs i \leqs r\ra$ is a closed connected unipotent subgroup of $C_G(x)$. Moreover, $v_i \in {\rm Lie}(X_i)$ for each $i$, so $\dim H \geqs r$ and thus $H = C_G(x)$ (note that $C_G(x)$ is connected since $G$ is adjoint). Part (i) now follows since $C_G(x)$ is abelian. Finally, part (ii) follows from the $B$-equivariance of $\theta$ (see Proposition \ref{l:exp00}).
 \end{proof}

\begin{prop}\label{l:bt}
Let $U$ be the unipotent radical of a Borel subgroup $B$ of $G$, let $W$ be a proper non-zero subalgebra of ${\rm Lie}(U)$ and let $H$ be the stabilizer of $W$ in $G$.  Assume $H$ contains a regular unipotent element of $G$ of order $p$. Then either 
\begin{itemize}\addtolength{\itemsep}{0.2\baselineskip}
\item[{\rm (i)}] $H$ is contained in a proper parabolic subgroup of $G$; or
\item[{\rm (ii)}] $H$ is contained in an $A_1$-type subgroup of $G$.  
\end{itemize}
\end{prop}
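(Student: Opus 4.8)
The plan is to prove the dichotomy by embedding $H$ in a maximal closed positive-dimensional subgroup $M$ of $G$ and then reading off the two possibilities from the Borel--Tits theorem together with the Saxl--Seitz classification \cite[Theorem A]{SS}. Before this can be done I first need to know that $H$ is positive-dimensional, so that such an $M$ exists. Here I would exploit the hypothesis $W \subseteq {\rm Lie}(U)$ via the exponential isomorphism $\theta$ of Proposition \ref{l:exp00}: since $W$ is a subalgebra of the nilpotent Lie algebra ${\rm Lie}(U)$, its image $U_W = \theta(W)$ is a non-trivial connected unipotent subgroup of $U$ with ${\rm Lie}(U_W) = W$, and since $U_W$ normalizes its own Lie algebra it stabilizes $W$, so $U_W \leqs H$ and $H$ is positive-dimensional. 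Moreover ${\rm Lie}(G)$ is irreducible as a $G$-module (as $p \geqs h$, so $p$ is good and large), whence $H = {\rm Stab}_G(W) \neq G$. Therefore $H$ lies in some maximal closed positive-dimensional subgroup $M$ of $G$.

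Next I would run the standard dichotomy on $M$. If $M$ is a parabolic subgroup then, being proper, it places $H$ in a proper parabolic and conclusion (i) holds. Otherwise I would invoke Borel--Tits \cite[Corollary 3.9]{BT} in the form that a maximal positive-dimensional subgroup which is not reductive is forced to be parabolic (its unipotent radical would be normalized by a proper parabolic containing $M$, which by maximality equals $M$); hence $M$ is reductive. Since $M \geqs H$ contains the regular unipotent element $x$, the classification \cite[Theorem A]{SS} applies: either $M$ is an $A_1$-type subgroup, so that $H \leqs M$ and (ii) holds, or $(G,M) = (E_6, F_4)$.

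The case $(G,M) = (E_6,F_4)$ is where I expect the genuine difficulty. First I would rule out $H = F_4$: as an $F_4$-module ${\rm Lie}(E_6) = {\rm Lie}(F_4) \oplus V$, where $V$ is the $26$-dimensional irreducible module (using $p>3$), and these two summands are the only proper non-zero $F_4$-submodules. Since $\dim {\rm Lie}(F_4) = 52 > 36 = \dim {\rm Lie}(U)$ we cannot have $W = {\rm Lie}(F_4)$, and $V$ is not a subalgebra (as $[V,V]$ is a non-zero subspace of ${\rm Lie}(F_4)$ meeting $V$ trivially), so no proper non-zero $F_4$-stable subalgebra lies in ${\rm Lie}(U)$; thus $F_4$ does not stabilize $W$ and $H \subsetneq F_4$. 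I would then pick a maximal positive-dimensional subgroup $M'$ of $F_4$ containing $H$. If $M'$ is a proper parabolic of $F_4$ with unipotent radical $Q' \neq 1$, then $H \leqs N_{F_4}(Q') \leqs N_G(Q')$, and a second application of Borel--Tits, this time in $G$, puts $N_G(Q')$ and hence $H$ in a proper parabolic of $G$, giving (i). Otherwise $M'$ is reductive and contains $x$, which is regular unipotent in $F_4$ as well (the Coxeter numbers of $E_6$ and $F_4$ coincide and their regular unipotent classes correspond); applying \cite[Theorem A]{SS} inside $F_4$ then forces $M'$ to be an $A_1$-type subgroup, so $H \leqs M'$ yields (ii). The main obstacle is precisely this $(E_6,F_4)$ configuration --- eliminating $H = F_4$ and disposing of the parabolic-of-$F_4$ subcase --- and the decisive structural input throughout is that $W \subseteq {\rm Lie}(U)$ consists of nilpotent elements, which simultaneously produces the unipotent subgroup $U_W \leqs H$ and obstructs the $F_4$-stability of $W$.
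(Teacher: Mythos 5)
Your proof is correct and follows essentially the same route as the paper: use the exponential isomorphism $\theta$ of Proposition \ref{l:exp00} to show that $H$ is positive dimensional, then combine Borel--Tits with the Saxl--Seitz classification \cite[Theorem A]{SS}, and finally dispose of the $(E_6,F_4)$ configuration by a second application of \cite{SS} inside $F_4$. The only (harmless) local differences are that the paper exponentiates the centre $Z(W)$ rather than all of $W$ to produce a non-trivial unipotent subgroup of $H$, and it excludes the possibility $W=V_{26}$ by observing that $V_{26}$ meets the Lie algebra of a maximal torus non-trivially (contradicting $W\subseteq{\rm Lie}(U)$) rather than by noting that $V_{26}$ is not a subalgebra.
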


\begin{proof}
Since $p \geqs h$, we can consider the isomorphism $\theta: {\rm Lie}(U) \to U$ in (\ref{e:exp}). Let $Z=Z(W)$ be the centre of $W$, which is a non-zero abelian subalgebra of $W$ stabilized by $H$. We claim that $\theta(Z) \leqs H$. To see this, let $z \in Z$, $w \in W$ and note that $\theta(z)$ and $\theta(w)$ commute since $[z,w]=0$ in ${\rm Lie}(U)$ (see the proof of \cite[Proposition 5.4]{Seitz}). The $B$-equivariance of $\theta$ implies that
$$\theta(\theta(z)\cdot w) = \theta(z)\theta(w)\theta(z)^{-1} = \theta(w),$$
so $\theta(z) \cdot w = w$ and the claim follows. Therefore, $H$ is a positive dimensional subgroup of $G$ containing a regular unipotent element.

To complete the argument, we proceed as in the proof of Proposition \ref{p:new1}, using Borel-Tits \cite[Corollary 3.9]{BT}. Let us assume $H$ is not contained in a proper parabolic subgroup of $G$. Then $H \leqs M_1$, where $M_1$ is a maximal closed reductive positive dimensional subgroup of $G$. By the main theorem of \cite{SS}, either $M_1$ is an $A_1$-type subgroup, or $G = E_6$ and $M_1=F_4$, so we may assume that we are in the latter situation. Suppose $H=M_1$. Since 
$$V|_{M_1} = {\rm Lie}(M_1) \oplus V_{26},$$
where $V_{26}$ is the minimal module for $M_1$, it follows that $W=V_{26}$ is the only possibility. But $V_{26}$ must contain non-zero elements in the Lie algebra of a maximal torus of $G$ (just by comparing dimensions) and this is a contradiction. Therefore $H$ is a proper subgroup of $M_1$ and thus $H \leqs M_2$ for some maximal closed reductive subgroup $M_2$ of $M_1$. By a further application of \cite{SS} we conclude that $H$ is contained in an $A_1$-type subgroup of $G$. 
 \end{proof}

\subsection{Methods}\label{ss:me}

In this section we discuss the proof of Theorem \ref{t:main}, highlighting the main steps and ideas. 

Let $G$ be a simple exceptional algebraic group of adjoint type defined over an algebraically closed field $K$ of characteristic $p>0$. Let $r$ be the rank of $G$ and let $V = {\rm Lie}(G)$ be the adjoint module. Suppose $x \in X = {\rm PSL}_{2}(p)<G$ is a regular unipotent element of $G$, so $p \geqs h$ where $h$ is the Coxeter number of $G$. The embedding of $X$ in $G$ corresponds to an abstract homomorphism $\varphi:{\rm SL}_2(p)\to G$ with kernel $Z = Z({\rm SL}_{2}(p))$ and image $X$. 

As before, let $\mathcal{L}_{\mathbb{C}}$ be a simple Lie algebra over $\mathbb{C}$ of type $\Phi(G)$ and fix a Chevalley basis 
\begin{equation}\label{e:chev}
\mathcal{B} = \{e_{\a}, f_{\a}, h_{\gamma} \mid \a \in \Phi^{+}(G), \, \gamma \in \Pi(G) \}.
\end{equation}
Since $p \geqs h$, we can view $\mathcal{B}$ as a basis for $V$, where $e_{\a}, f_{\a}$ are in the appropriate root spaces with respect to the Cartan subalgebra spanned by the $h_{\gamma}$. 
It will be convenient to write $z_i = z_{\a_i}$ for $z \in \{e,f,h\}$ and $\Pi(G) = \{\a_1, \ldots, \a_r\}$. 

Set $e = \sum_{i=1}^re_i$ and let $(e,h,f)$ be an $\mathfrak{sl}_2$-triple as in Proposition \ref{p:a1key}. Let $A$ be the corresponding $A_1$-type subgroup of $G$ constructed in Proposition \ref{p:a1key}, with maximal torus $T = \{t(c) \mid c \in K^{\times}\}$ and associated morphism $t:\mathbb{G}_m \to T$. By replacing $X$ by a suitable $G$-conjugate, we may assume that $x = {\rm exp}({\rm ad}(e)) \in A$. 
Let $v:\mathbb{G}_a \to A$ be a morphism of algebraic groups such that 
$$t(c)v(\gamma)t(c)^{-1} = v(c^2\gamma)$$ 
for all $c\in K^{\times}$, $\gamma \in K$. We may assume $v: \mathbb{G}_a \to {\rm im}(v)$ is an isomorphism of algebraic groups.  

Consider the elements
\begin{equation}\label{e:mats}
u=\left(\begin{array}{cc} 1&1\\ 0&1\end{array}\right), \;\; s = \left(\begin{array}{cc} \xi&0 \\ 0&\xi^{-1}\end{array}\right)
\end{equation}
in ${\rm SL}_{2}(p)$, where $\mathbb{F}_{p}^{\times} = \la \xi \ra = \{1, \ldots, p-1\}$. 
Without loss of generality, we may assume that $x=\varphi(u) = v(1)$ so $\varphi(sus^{-1}) = \varphi(u^m)  = v(m)$ with $m = \xi^2$.  
Then $t(\xi)xt(\xi)^{-1} = \varphi(s) x \varphi(s)^{-1}$ and thus
$\varphi(s) = t(\xi)z$ for some $z\in C_G(x)$. Set $\bar{s} = \varphi(s) \in X$.

\begin{lem}\label{l:key}
There exists a $C_G(x)$-conjugate of $X$ containing $x$ and $t(\xi)$.
\end{lem}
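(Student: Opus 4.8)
The plan is to build the required conjugate inside the connected solvable group generated by $T$ and $C_G(x)$, using the semisimplicity of $\bar{s}$ as the crucial input. First I would record that $\bar{s}=\varphi(s)$ is a semisimple element of $G$: since $s=\mathrm{diag}(\xi,\xi^{-1})$ has order $p-1$ and $-I=s^{(p-1)/2}\in\ker\varphi$, the order of $\bar{s}$ divides $(p-1)/2$ and is in particular coprime to $p$, so $\bar{s}$ is semisimple. From the discussion preceding the lemma we have the decomposition $\bar{s}=t(\xi)z$ with $z\in C_G(x)$.

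Next I would form the subgroup $R=T\cdot C_G(x)$. By Proposition \ref{l:exp1}(ii) (or Proposition \ref{p:a1key}(v)) the torus $T$ normalizes each $X_i$, hence normalizes $C_G(x)$, so $R=T\ltimes C_G(x)$ is a closed connected solvable subgroup of $G$ with unipotent radical $C_G(x)$ and maximal torus $T$ (here $C_G(x)$ is connected because $G$ is adjoint). Since $\bar{s}=t(\xi)z\in R$ is semisimple, it lies in some maximal torus $T'$ of $R$. As all maximal tori of a connected solvable group are conjugate under its unipotent radical, there exists $g\in C_G(x)$ with $g^{-1}T'g=T$, and in particular $g^{-1}\bar{s}g\in T$.

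The step I expect to require the most care is checking that $g^{-1}\bar{s}g$ is \emph{exactly} $t(\xi)$, and not merely some element of $T$. For this I would pass to the quotient $\pi:R\to R/C_G(x)$, whose restriction to $T$ is an isomorphism $T\xrightarrow{\sim}R/C_G(x)$ (since $T\cap C_G(x)=\{1\}$ and $R=T\,C_G(x)$). Because $z\in C_G(x)=\ker\pi$ we have $\pi(\bar{s})=\pi(t(\xi))$, and since $g\in C_G(x)$ is also killed by $\pi$ we get $\pi(g^{-1}\bar{s}g)=\pi(\bar{s})=\pi(t(\xi))$. As $g^{-1}\bar{s}g$ and $t(\xi)$ both lie in $T$ and $\pi|_T$ is injective, this forces $g^{-1}\bar{s}g=t(\xi)$. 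Finally, $g\in C_G(x)$ gives $g^{-1}xg=x$, so the $C_G(x)$-conjugate $X^{g}=g^{-1}Xg$ contains both $x=g^{-1}xg$ and $t(\xi)=g^{-1}\bar{s}g$, as required.

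I would flag one conceptual point that motivates this route. A more computational approach would try to solve $g^{-1}\bar{s}g=t(\xi)$ directly: writing the equation as $z^{-1}=\sigma(g)^{-1}g$ with $\sigma$ denoting conjugation by $t(\xi)$, it reduces on the $i$-th coordinate of $C_G(x)\cong\prod_i X_i$ to inverting the scalar $1-\xi^{-d_i}$, which fails precisely when $(p-1)\mid d_i$. The advantage of the solvable-group argument is that it bypasses this case division altogether: the semisimplicity of $\bar{s}$ simultaneously guarantees membership in a maximal torus and, a posteriori, forces $z$ to have trivial component along any ``bad'' factor $X_i$ on which $t(\xi)$ acts trivially, since otherwise $\bar{s}$ would acquire a nontrivial unipotent Jordan part. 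This is why I would favour the conjugacy-of-maximal-tori argument over a direct cocycle computation.
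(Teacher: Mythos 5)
Your proof is correct, but it takes a genuinely different route from the paper. The paper's argument is precisely the ``direct cocycle computation'' you chose to avoid: it writes a general $y=\prod_i x_i(\delta_i)\in C_G(x)$, uses the commutation rule $t(\xi)x_i(\gamma)t(\xi)^{-1}=x_i(\xi^{d_i}\gamma)$ from Proposition \ref{l:exp1}(ii) to compute $y\bar{s}y^{-1}=\prod_i x_i(\delta_i+\xi^{d_i}(\gamma_i-\delta_i))t(\xi)$, and solves $\delta_i=\xi^{d_i}\gamma_i/(\xi^{d_i}-1)$ coordinate by coordinate; the at most one index $j$ with $\xi^{d_j}=1$ is then eliminated exactly as you predict, by observing that a surviving factor $x_j(\gamma_j)$ commuting with $t(\xi)$ would give $\bar{s}$ a nontrivial unipotent part. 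Your argument replaces this computation with the structure theory of the connected solvable group $R=T\ltimes C_G(x)$: semisimple elements of $R$ lie in maximal tori, maximal tori are conjugate under $R_u(R)=C_G(x)$, and the identification $g^{-1}\bar{s}g=t(\xi)$ (rather than merely $g^{-1}\bar{s}g\in T$) follows cleanly from injectivity of $\pi|_T$. The ingredients are the same in both proofs --- Proposition \ref{l:exp1} to see that $T$ normalizes $C_G(x)$ and that $C_G(x)$ is connected abelian unipotent, plus the semisimplicity of $\bar{s}$ --- but your version buys uniformity (no case division on whether some $\xi^{d_i}=1$) at the cost of invoking the conjugacy theorem for maximal tori of connected solvable groups, whereas the paper's version is self-contained and makes the explicit conjugating element visible, which is in keeping with the computational style of the rest of the argument. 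Both are complete; no gap.
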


\begin{proof}
As noted above, we have $\bar{s} = t(\xi)z$ for some $z\in C_G(x)$. By Proposition \ref{l:exp1} there are scalars $\gamma_i \in K$ such that $z = \prod_{i=1}^{r}x_{i}(\gamma_i)$.
Let us consider a general element $y = \prod_{i=1}^{r}x_{i}(\delta_i) \in C_G(x)$. In view of (\ref{e:tac}), we get
\begin{eqnarray*}
y\bar{s}y^{-1} = \prod_{i}x_{i}(\delta_i)t(\xi)\prod_{i}x_{i}(\gamma_i - \delta_i)t(\xi)^{-1}t(\xi)  \!\!\! & = & \!\!\! \prod_{i}x_{i}(\delta_i)\prod_{i}x_{i}(\xi^{d_i}(\gamma_i - \delta_i))t(\xi)\\
\!\!\! & = & \!\!\! \prod_{i}x_{i}(\delta_i+\xi^{d_i}(\gamma_i-\delta_i))t(\xi)
\end{eqnarray*}
where the $d_i$ are the integers appearing in Table \ref{tab:di}.

Since $p \geqs h$ and $\mathbb{F}_{p}^{\times} = \la \xi \ra$, it is easy to see that there is at most one $i$ such that $\xi^{d_i}=1$. If there is no such $i$ then we can set $\delta_i = \xi^{d_i}\gamma_i/(\xi^{d_i}-1)$ for all $i$, so  
$y\bar{s}y^{-1} = t(\xi)$ and $X^y$ is the desired conjugate of $X$. Finally, suppose $\xi^{d_j}=1$ and $\gamma_j \ne 0$ for some $j$. By defining $\delta_i$ as above for all $i \ne j$, we get 
$y\bar{s}y^{-1} = t(\xi)x_{j}(\gamma_j)$ with $[t(\xi),x_{j}(\gamma_j)]=1$. But this implies that $y\bar{s}y^{-1}$ is a non-semisimple element, which  contradicts the  semisimplicity of $\bar{s}$.
 \end{proof}

In view of the lemma, we may assume that $X$ contains $t(\xi)$, which corresponds to a diagonalizable element $s \in {\rm SL}_{2}(p)$ with eigenvalues $\xi$ and $\xi^{-1}$. Since $t(\xi) \in T$, we can use the known action of $A$ on $V$ (see the proof of Proposition \ref{p:11}) to determine the eigenvectors and eigenspaces of $s$ on $V$. For example, 
\begin{equation}\label{e:cvx}
\{\xi^{d_1},\xi^{d_2}, \ldots, \xi^{d_r}\}
\end{equation}
is the collection of eigenvalues of $s$ on $C_V(x)$, where the $d_i$ are given in Table \ref{tab:di}. We set $\bar{s} = t(\xi) = sZ \in X$, where $Z$ is the centre of ${\rm SL}_{2}(p)$. Note that $A$ contains the Borel subgroup $\la \bar{s},x\ra$ of $X$. 

The proof of Theorem \ref{t:main} has three main steps, which we now describe.

\vs

\noindent \emph{Step 1: Elimination.} Our initial aim is to reduce to the situation where the action of $X$ on $V$ is compatible with the decomposition of $V$ as an $A$-module given in Table \ref{t:dec}. In almost all cases, we are able to achieve this goal. To do this, we consider the possible decompositions of $V|_{X}$ as a direct sum of indecomposable $KX$-modules, using the description of these modules given in Section \ref{s:rep}, with the aim of eliminating all but one possibility. 

First we use the fact that the decomposition of $V|_{X}$ has to be compatible with the Jordan form of $x$ on $V$ (this can be read off from the relevant tables in \cite{Lawther}). In addition, it  must be compatible with the known eigenvalues of $s$ on $V$ (as noted above, these are just the eigenvalues of $t(\xi)$ on $V$, which we can compute from the known action of $A$ on $V$). Note that if $M$ is an indecomposable summand of $V|_{X}$ then the restriction of $M$ to $\la s \ra$ is completely reducible, so we just need to identify the $KX$-composition factors of $M$ in order to compute the eigenvalues of $s$ on this summand. Often it is sufficient to compare the eigenvalues of $s$ on $C_V(x)$ with the expected eigenvalues in (\ref{e:cvx}), and we can also use our earlier calculations on the traces of elements of order $2$ and $3$ to obtain further restrictions on $V|_{X}$ (see Section \ref{ss:trace}). With this approach in mind, the following lemma will be useful.

\begin{lem}\label{l:eiggg}
Let $M$ be an indecomposable $KX$-module of the form $L_X(i)$, $U$ or $W(j)$, where $i \in \{0,2, \ldots, p-1\}$ and $j \in \{2,4,\ldots, p-3\}$. Then the eigenvalues of $s$ on $C_M(x)$ are $\xi^i$, $1$ and $\{\xi^j, \xi^{-j}\}$, respectively. 
\end{lem}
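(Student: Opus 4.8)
The plan is to treat the three module types in turn, in each case realising $M$ as the restriction to $X$ of a module for the ambient algebraic group $A$ of type $A_1$ (with maximal torus $T = \{t(c)\}$ and rank-$1$ unipotent subgroup $R = {\rm im}(v)$, so that $x = v(1) \in R$ and $s = t(\xi)$ acts on a $T$-weight-$n$ vector by the scalar $\xi^n$, by Proposition \ref{p:a1key}(iv)). The key observation throughout is that the space $M^{R}$ of $R$-fixed vectors consists of highest weight vectors and satisfies $M^{R} \subseteq C_M(x) = \ker(v(1)-1)$; whenever these two spaces have equal dimension they coincide, and then the eigenvalues of $s$ on $C_M(x)$ are read off from the highest weights of $M$. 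Since the eigenvalues of $s$ and the Jordan form of $x$ depend only on the isomorphism type of $M$ as a $\la s, x\ra$-module, we are free to choose any convenient realisation.

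For $M = L_X(i)$ with $i \in \{0,2,\ldots,p-1\}$ we take $M = L_A(i)|_X$, the restriction of the $i$-th symmetric power of the natural $2$-dimensional $A$-module (as in the proof of Lemma \ref{l:1}). Here $x$ has a single Jordan block $J_{i+1}$, so $\dim C_M(x) = 1$, while $M^{R}$ is the $1$-dimensional highest weight space of weight $i$; these coincide and $s$ acts on it by $\xi^{i}$. For $M = U = P_1$ we argue directly: by Section \ref{s:rep}, $x$ acts on $U$ as a single block $J_p$, so $\dim C_U(x) = 1$; on the other hand ${\rm soc}(U) = L_X(0)$ is the trivial module and hence lies in $C_U(x)$. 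Comparing dimensions gives $C_U(x) = {\rm soc}(U)$, on which $s$ acts trivially, that is, with eigenvalue $1$.

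The substantive case is $M = W(j) = P_{j+1}$ with $j \in \{2,4,\ldots,p-3\}$. Following the proof of Proposition \ref{p:11} (via \cite[Lemma 2.3]{Seitz}), I would realise $W(j)$ as the restriction to $X$ of the indecomposable tilting $A$-module $T(c)$ with $c = 2p-2-j$; its highest weights, read from a Weyl (equivalently good) filtration, are $c$ and $c' = 2p-2-c = j$, each occurring once. Consequently $M^{R}$ is $2$-dimensional, with weights $c$ and $c'$. By Section \ref{s:rep}, $x$ has Jordan form $[J_p^2]$ on $W(j)$, so $\dim C_M(x) = 2$ as well; since $M^{R} \subseteq C_M(x)$, the two spaces coincide. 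The eigenvalues of $s$ on $C_M(x)$ are therefore $\xi^{c}$ and $\xi^{c'}$, and since $\xi$ has order $p-1$ we have $\xi^{c} = \xi^{2(p-1)-j} = \xi^{-j}$ and $\xi^{c'} = \xi^{j}$, giving $\{\xi^j, \xi^{-j}\}$ as claimed.

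The main obstacle is the $W(j)$ case, and more precisely the identity $C_{W(j)}(x) = T(c)^{R}$. Everything hinges on the two dimension counts agreeing: that $x = v(1)$ has exactly two Jordan blocks on $T(c)$, which is precisely the statement that $x$ acts as $[J_p^2]$ recorded in Section \ref{s:rep}, and that $T(c)$ has exactly two Weyl-filtration factors, of highest weights $2p-2-j$ and $j$, so that $\dim T(c)^{R} = 2$ with these weights. Granting these, the inclusion $T(c)^{R} \subseteq C_M(x)$ forces equality and the reduction $\xi^{2p-2-j} = \xi^{-j}$ is immediate. I note that a self-duality argument (using that $W(j)$ is self-dual and relating $\ker(x-1)$ to ${\rm coker}(x-1)$) also recovers the second eigenvalue in most cases, but it degenerates when $\xi^{2j} = 1$ (for instance $j = (p-1)/2$), so the maximal-vector argument is preferable since it is uniform.
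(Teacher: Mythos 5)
Your proposal is correct and follows essentially the same route as the paper: both arguments rest on the Jordan forms $[J_{i+1}]$, $[J_p]$, $[J_p^2]$ together with the realisation of $W(j)$ as the restriction of the tilting module $T(2p-2-j)$ for the ambient algebraic $A_1$, reading the eigenvalues of $s$ off the weights of the fixed vectors. The only cosmetic difference is that the paper extracts the eigenvalue $\xi^{j}$ directly from ${\rm soc}(W(j)) = L_X(j)$ and invokes the tilting realisation only for $\xi^{-j} = \xi^{2p-2-j}$, whereas you obtain both weights at once from the Weyl filtration of $T(2p-2-j)$.
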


\begin{proof}
First recall that $x$ has Jordan form $[J_{i+1}]$, $[J_p]$ and $[J_p^2]$ on $L_X(i)$, $U$ and $W(j)$, respectively. The fixed point of $x$ on the simple module $L_X(i)$ has highest weight $i$, so the result is clear in this case. Similarly, ${\rm soc}(U) = L_X(0)$ so $s$ has eigenvalue $\xi^0$ on $C_U(x)$. Finally, suppose $M = W(j)$. The highest weight of ${\rm soc}(M) = L_X(j)$ is $j$, so $\xi^j$ is one of the eigenvalues of $s$ on $C_M(x)$. To determine the second eigenvalue, it is helpful to view $W(j)$ as the restriction to $X$ of the tilting module $T(2p-2-j)$ for the ambient algebraic group of type $A_1$ (see \cite[Lemma 2.3]{Seitz}). On the latter module, $x$ has a fixed point of weight $2p-2-j$ (the high weight), so the eigenvalue of $s$ is $\xi^{2p-2-j} = \xi^{-j}$ as required.
 \end{proof} 

Let us illustrate how Step 1 is carried out in the specific case $(G,p) = (E_8,31)$.

\begin{ex}\label{e:e8}
Suppose $G=E_8$ and $p=31$, so $x$ has Jordan form $[J_{31}^8]$ on $V$ (see \cite[Table 9]{Lawther}). In particular, $V|_X$ is projective and thus every indecomposable summand of $V|_X$ is also projective. In terms of the notation introduced in Section \ref{s:rep}, the possibilities for $V|_{X}$ are as follows 
$$\left\{\begin{array}{l}
M_1 \oplus M_2 \oplus M_3 \oplus M_4 \oplus M_5 \oplus M_6 \oplus M_7 \oplus M_8 \\
W(a_1) \oplus M_1 \oplus M_2 \oplus M_3 \oplus M_4 \oplus M_5 \oplus M_6 \\
W(a_1) \oplus W(a_2) \oplus M_1 \oplus M_2 \oplus M_3 \oplus M_4 \\
W(a_1) \oplus W(a_2) \oplus W(a_3) \oplus M_1 \oplus M_2 \\
W(a_1) \oplus W(a_2) \oplus W(a_3) \oplus W(a_4)
\end{array}\right.$$
where $M_{i} \in \{L_X(30), U\}$ and $a_i \in \{2,4,\ldots, 28\}$. If $V|_{X}$ has an $M_i$ summand, then $s$ has an eigenvalue $\xi^{30} = \xi^{0}$ on $C_V(x)$, which contradicts (\ref{e:cvx}), so we must have   
$$V|_{X} = W(a_1) \oplus W(a_2) \oplus W(a_3) \oplus W(a_4).$$ 
Since $\xi^{30}=1$ and $s$ has eigenvalues $\xi^{i}, \xi^{-i}$ on $C_{W(i)}(x)$ (see Lemma \ref{l:eiggg}), it follows that 
$$\{\xi^{a_1}, \xi^{-a_1}, \xi^{a_2}, \xi^{-a_2}, \xi^{a_3}, \xi^{-a_3}, \xi^{a_4}, \xi^{-a_4}\} = \{\xi^{28},\xi^{16},\xi^{8},\xi^{4},\xi^{26},\xi^{22},\xi^{14},\xi^2\}$$
(see Table \ref{tab:di}).
Up to a re-ordering of summands, this immediately implies that 
$$a_1 \in \{2,28\},\; a_2 \in \{4,26\},\; a_3 \in \{8,22\},\; a_4 \in \{14,16\}.$$ 
Let $y \in X$ be an involution. Since ${\rm tr}(W(i),y) =\pm 2$ and ${\rm tr}(V,y) \in \{-8,24\}$ (see Lemma \ref{l:3} and Proposition \ref{p:trace}), it follows that ${\rm tr}(W(a_i),y) = -2$ for all $i$, whence $a_i \equiv 2 \imod{4}$ and thus $(a_1,a_2,a_3,a_4) = (2,26,22,14)$ is the only possibility. We have now reduced to the case where the decomposition of $V|_{X}$ is compatible with $V|_{A}$ (see Table \ref{t:dec}).
\end{ex}

\vs

\noindent \emph{Step 2: Extension.} Next observe that if $V|_{X}$ has the decomposition given in Table \ref{t:dec} then the socle of $V|_{X}$ has a simple summand $W=L_X(2)$. To complete the argument, we aim to show that $W$ is an $\mathfrak{sl}_2$-subalgebra of $V$ and its stabilizer in $G$ is an $A_1$-type subgroup. We can do this in almost every case; the exceptions are the two special cases appearing in the statement of Theorem \ref{t:main}. 

Let $\{w_2, w_0, w_{-2}\}$ be a basis for $W$, where $w_i$ is an eigenvector for $s$ with eigenvalue $\xi^i$. We may assume that the action of $x$ on $W$ is given by the matrix
\begin{equation}\label{e:xmat}
\left(\begin{array}{ccc}
1 & 1 & 1 \\
0 & 1 & 2 \\
0 & 0 & 1 
\end{array}\right)
\end{equation}
with respect to this basis (that is, $x(w_0) = w_0+w_2$, etc.). If we define $\bar{s} = sZ \in X$ as above then $\la \bar{s},x\ra$ is a Borel subgroup of $X$ and we can consider the opposite Borel subgroup $\la \bar{s},x' \ra$ of $X$, where $x' \in X$ is also a regular unipotent element of order $p$. With respect to the above basis, we may assume that $x'$ acts on $W$ via the matrix
\begin{equation}\label{e:xxmat}
\left(\begin{array}{ccc}
1 & 0 & 0 \\
2 & 1 & 0 \\
1 & 1 & 1 
\end{array}\right)
\end{equation}
If all these conditions are satisfied, then we will say that $\{w_2, w_0, w_{-2}\}$ is a \emph{standard basis} for $W$.

With the aid of {\sc Magma} \cite{magma} we can construct a $\dim G \times \dim G$ matrix to represent the action of $x$ on $V$ with respect to our Chevalley basis $\mathcal{B}$. Let us illustrate this with an example.

\begin{ex}\label{ex:g2}
For $(G,p)=(G_2,7)$ we proceed as follows in {\sc Magma}:

\vspace{1mm}

\begin{verbatim}
G:=GroupOfLieType("G2",Rationals());             
L:=LieAlgebra(G);
e,f,h:=ChevalleyBasis(L);
I1:=[1..6]; I2:=[1..2];

B:=[f[7-i] : i in I1] cat [e[i]*f[i] : i in I2] cat [e[i] : i in I1];
L:=ChangeBasis(L,B);
B:=Basis(L);
e:=[B[8+i] : i in I1]; f:=[B[7-i] : i in I1]; h:=[B[6+i]: i in I2];

ad:=AdjointRepresentation(L);
y:=ad(e[1]+e[2]);
A:=MatrixAlgebra(Rationals(),14);
x:=Identity(A);  y:=A!y; 
for i in [1..10] do x:=x+(1/Factorial(i))*y^i; end for; 
B:=MatrixAlgebra(GF(7),14);
x:=B!x;
\end{verbatim}

In this example, we are working with a Chevalley basis 
\[
\mathcal{B}=\{e[i] , f[i], h[j] \,:\, i \in \{1, \ldots, 6\}, j \in \{1,2\}\}
\]
where $e[i]$ spans the root space of the $i$-th positive root, $f[i]$ is in the root space of the corresponding negative root, and $h[j] = [e[j],f[j]]$ for $j = 1,2$, with respect to the following ordering 
\[
\a_1, \; \a_2, \; \a_1+\a_2, \; 2\a_1+\a_2, \; 3\a_1+\a_2, \; 3\a_1+2\a_2
\]
of positive roots (note that this agrees with the ordering given by the {\sc Magma} command {\small \textsf{PositiveRoots(G)}}). We adopt an analogous set-up in all cases.
\end{ex}

Moreover, we can use Proposition \ref{p:a1key}(iv) to compute the eigenvalues and eigenvectors of $t(\xi)$ (and thus $s$) on $V$ in terms of $\mathcal{B}$. For $i \in \mathbb{Z}$, it will be convenient to write $E_i$ for the $\xi^i$-eigenspace of $s$ on $V$ (so $w_i \in E_i$ for the elements in a standard basis of $W$). 

Next we identify a basis $\{v_1, \ldots, v_r\}$ of the $1$-eigenspace $C_V(x) = \ker(x-1)$ in terms of $\mathcal{B}$, where $v_i \in E_{d_i}$ (see Table \ref{tab:di}). Since $w_2 \in C_V(x) \cap E_2$ we can write
$$w_2 = \sum_{i=1}^{r}a_i v_i$$
for some $a_i \in K$, where $a_i \ne 0$ only if $\xi^{d_i} = \xi^2$. Similarly, 
\begin{eqnarray*}
w_0  \!\!\! & \in & \!\!\! \left(\ker((x-1)^2) \setminus \ker(x-1)\right) \cap E_0, \\
w_{-2} \!\!\! & \in & \!\!\! \left(\ker((x-1)^3) \setminus \ker((x-1)^2)\right) \cap E_{-2}.
\end{eqnarray*}
Using {\sc Magma} it is straightforward to compute bases for the relevant kernels; these computations can be done by hand, but it is much quicker and more efficient to use a machine. 

Given these bases, say $\mathcal{B}_2$, $\mathcal{B}_0$ and $\mathcal{B}_{-2}$, we can write
$$w_2 = \sum_{v \in \mathcal{B}_2}a_vv,\;\;   w_0 = \sum_{v \in \mathcal{B}_0}b_vv,\;\;   w_{-2} = \sum_{v \in \mathcal{B}_{-2}}c_vv$$
for $a_v,b_v,c_v \in K$ and our goal is to determine these scalars. To do this, we can use the specified actions of $x$ and $x'$ on $W$ to derive relations between the coefficients. Further relations can be determined by exploiting the fact that $x$ and $x'$ are regular unipotent elements. For example, we observe that $x' \cdot w_{-2} = w_{-2}$ and 
$$x'\cdot [w_{-2},w_0] = [w_{-2},w_0+w_{-2}] = [w_{-2},w_0],$$
where $[\, ,\,]$ is the Lie bracket on $V$, 
so $w_{-2}, [w_{-2},w_0] \in C_V(x')$. Since the regularity of $x'$ implies that $C_G(x')$ is abelian, it follows that $C_V(x') = {\rm Lie}(C_G(x'))$ is an abelian subalgebra of $V$ (for the latter equality, recall that $p \geqs h$) and thus   
\begin{equation}\label{e:wm}
[w_{-2}, [w_{-2},w_0]] = 0.
\end{equation}

Proceeding in this way, our goal is to reduce to the case where $W = \la w_2,w_0,w_{-2}\ra$ is an $\mathfrak{sl}_2$-subalgebra, with $w_2 = \sum_{i=1}^{r}e_i$ and $w_{-2} \in \sum_{i=1}^{r}\Z f_i$. Moreover, we want to find integers $\l,\mu$ such that $(w_2,\l w_0,\mu w_{-2})$ is an $\mathfrak{sl}_2$-triple over $\Z$ (that is, an $\mathfrak{sl}_2$-triple of $\mathcal{L}_{\Z}$ in the notation of Section \ref{ss:a1}). Indeed, if we can do this, then Proposition \ref{p:new1} implies that the stabilizer of $W$ in $G$ is an $A_1$-type subgroup and so we are in the generic situation described in part (i) of Theorem \ref{t:main}. In a few cases, we are unable to force $w_{-2} \in \sum_{i=1}^{r}\Z f_i$, but by appealing to Proposition \ref{p:new2} we can still show that the same conclusion holds. 

In the remaining cases where $W$ is not an $\mathfrak{sl}_2$-subalgebra, or the action of $X$ on $V$ is incompatible with $V|_{A}$, we will show that $X$ stabilizes a non-zero subalgebra of $\la e_{\a}\mid \a \in \Phi^+(G)\ra$. More precisely, we will establish the following result, which reduces the proof of Theorem \ref{t:main} to the handful of cases appearing in Table \ref{tab:red} (see Remark \ref{r:main}(a) for the conjugacy statement in part (i)). 

\begin{thm}[Reduction Theorem]\label{t:red}
Let $G$ be a simple exceptional algebraic group of adjoint type over an algebraically closed field of characteristic $p >0$. Let $X = {\rm PSL}_{2}(p)$ be a subgroup of $G$ containing a regular unipotent element of $G$ and set $V = {\rm Lie}(G)$ with Chevalley basis as in (\ref{e:chev}). Then one of the following holds:
\begin{itemize}\addtolength{\itemsep}{0.2\baselineskip}
\item[{\rm (i)}] $X$ is contained in an $A_1$-type subgroup of $G$ and $X$ is uniquely determined up to $G$-conjugacy;
\item[{\rm (ii)}] $X$ stabilizes a non-zero subalgebra of $\la e_{\a} \mid \a \in \Phi^{+}(G)\ra$ and $(G,p,V|_{X})$ is one of the cases in Table $\ref{tab:red}$.
\end{itemize}
\end{thm}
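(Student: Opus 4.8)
The plan is to argue case-by-case over the finitely many pairs $(G,p)$ with $h \leqs p$, where $G$ is one of $G_2, F_4, E_6, E_7, E_8$ and $p \geqs h(G)$; for $p$ larger than the entries in Table \ref{t:dec} (equivalently, $p$ exceeding $2h-2$ or so) the module $V|_X$ is forced to be completely reducible and compatible with $V|_A$, while for the remaining small primes we carry out an explicit analysis. The two conclusions (i) and (ii) of the theorem correspond exactly to the dichotomy in Step 2 of Section \ref{ss:me}: either we succeed in realising the socle summand $W = L_X(2)$ as an $\mathfrak{sl}_2$-subalgebra, in which case Propositions \ref{p:new1} or \ref{p:new2} identify $\mathrm{Stab}_G(W)$ as an $A_1$-type subgroup and place us in (i), or we do not, in which case we must exhibit the nonzero $X$-stable subalgebra of $\la e_\a \mid \a \in \Phi^+(G)\ra$ demanded by (ii).

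The first key step is \emph{elimination} (Step 1): for each $(G,p)$ I would enumerate the candidate decompositions of $V|_X$ into indecomposable $KX$-modules, using the list $L_X(i)$, $U$, $W(j)$ from Section \ref{s:rep}, constrained by three pieces of data. First, the Jordan form of $x$ on $V$ (read off from \cite{Lawther}) restricts the projective/non-projective summand types via Lemma \ref{l:jf}. Second, by Lemma \ref{l:key} we may assume $t(\xi) \in X$, so the eigenvalues of $s$ on $C_V(x)$ must match the list $\{\xi^{d_1},\ldots,\xi^{d_r}\}$ in (\ref{e:cvx}); combined with Lemma \ref{l:eiggg} this pins down the admissible parameters $i,j$ of each summand. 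Third, the trace constraints of Section \ref{ss:trace} — comparing $\sum_j \mathrm{tr}(M_j,x_2)$ and $\sum_j \mathrm{tr}(M_j,x_3)$ against the admissible values in Table \ref{t:trace} via Lemmas \ref{l:1}--\ref{l:3}, with Remarks \ref{r:eeig} and \ref{r:AJL} supplying refinements for $E_6$ and the listed $(G,m)$ pairs — eliminate the surviving parasitic possibilities. Example \ref{e:e8} is the template: it shows how these three filters together force $V|_X$ into the unique shape of Table \ref{t:dec} in the generic case. The outcome of Step 1 is that every $(G,p)$ either reaches the Table \ref{t:dec} decomposition or lands among a short list of residual cases.

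The second key step is \emph{extension} (Step 2), carried out once $V|_X$ is known. Having fixed a standard basis $\{w_2,w_0,w_{-2}\}$ of the socle summand $W$ with the prescribed $x$- and $x'$-actions (\ref{e:xmat}), (\ref{e:xxmat}), I would use the \textsc{Magma} set-up of Example \ref{ex:g2} to represent $\mathrm{ad}(x)$ as a $\dim G \times \dim G$ matrix over $\mathbb{F}_p$, compute bases $\mathcal{B}_2,\mathcal{B}_0,\mathcal{B}_{-2}$ for the relevant kernel/eigenspace intersections, and write $w_2,w_0,w_{-2}$ with undetermined coefficients. Relations among these coefficients come from the imposed matrix actions of $x,x'$ on $W$ together with the abelian-subalgebra identity (\ref{e:wm}) forced by the regularity of $x'$. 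Solving these relations, the goal is to arrange $w_2 = \sum_i e_i$ and $w_{-2} \in \sum_i \Z f_i$ so that $(w_2,\l w_0,\mu w_{-2})$ is an $\mathfrak{sl}_2$-triple over $\Z$, whence Proposition \ref{p:new1} (or, when $w_{-2}$ cannot be pushed into $\sum_i \Z f_i$, Proposition \ref{p:new2}) yields conclusion (i). Where this fails, the surviving data of $V|_X$ and the explicit kernel computations instead exhibit the $X$-invariant subalgebra of $\la e_\a\ra$, giving conclusion (ii) and populating Table \ref{tab:red}.

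I expect the main obstacle to be Step 2 in precisely the residual cases, namely verifying that the coefficient relations genuinely force an $\mathfrak{sl}_2$-structure rather than merely an $X$-stable three-space: the identity (\ref{e:wm}) and the regularity of both $x$ and $x'$ are essential here, and there is no a priori reason the bracket $[w_2,w_{-2}]$ should be proportional to a toral element of the expected form. Separating the cases where the triple closes up (giving (i)) from those where a genuine non-$\mathfrak{sl}_2$ invariant subalgebra of $\la e_\a\ra$ persists (giving (ii) and the entries of Table \ref{tab:red}, including the eventual $(E_6,13)$ and $(E_7,19)$ exceptions) is the delicate heart of the argument and the point at which the machine computations are indispensable.
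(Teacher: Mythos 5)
Your proposal is correct and follows essentially the same route as the paper: the elimination step (Jordan forms from \cite{Lawther}, eigenvalues of $s$ on $C_V(x)$ via Lemma \ref{l:eiggg}, and trace/eigenvalue constraints from Section \ref{ss:trace} and Remark \ref{r:AJL}) followed by the extension step (standard basis of the socle summand, coefficient relations from (\ref{e:xmat}), (\ref{e:xxmat}) and (\ref{e:wm}), then Propositions \ref{p:new1} or \ref{p:new2}) is exactly how Sections \ref{s:g2}--\ref{s:e8} prove the result, with the residual non-$\mathfrak{sl}_2$ cases yielding the entries of Table \ref{tab:red}. The only item you do not address is the uniqueness claim in part (i), which the paper disposes of separately via Remark \ref{r:main}(a).
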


\begin{table}
$$\begin{array}{lll} \hline
G & p  & V|_{X} \\ \hline
F_4 & 13  & W(10) \oplus W(2) \\
E_6 & 13  & W(10) \oplus W(8) \oplus W(2) \\
&  & W(10) \oplus W(4) \oplus W(2) \\
& &  W(10)^2 \oplus W(4) \\
E_7 & 19  & W(8) \oplus W(4) \oplus W(2) \oplus U\\
& &  W(16) \oplus W(10) \oplus W(4) \oplus U \\
& &  W(16) \oplus W(14) \oplus W(8) \oplus U \\
E_8 & 37  & W(34) \oplus W(26) \oplus W(14) \oplus L_X(22) \oplus L_X(2) \\ \hline
\end{array}$$
\caption{The exceptional cases $(G,p,V|_{X})$ in Theorem \ref{t:red}}
\label{tab:red}
\end{table}

We will prove the Reduction Theorem in Sections \ref{s:g2}--\ref{s:e8}, considering each possibility for $G$ in turn. 

\vs

\noindent \emph{Step 3: Parabolic analysis.} The final step in our proof of Theorem \ref{t:main} concerns the cases arising in Theorem \ref{t:red}(ii), given in Table \ref{tab:red}. In view of Proposition \ref{l:bt}, we may assume that $X$ is contained in a proper parabolic subgroup $P=QL$ of $G$ and we proceed by studying the possible embeddings of $X$ in such a subgroup. Take $P$ to be a minimal such parabolic and let $\pi:P \to P/Q$ be the quotient map. By identifying $L$ with $P/Q$, we may view $\pi(X)$ as a subgroup of $L'$. Now we can show that $\pi(X)<H$, where $H$ is an $A_1$-type subgroup of $L'$ containing a regular unipotent element of $L'$ (namely, $\pi(x)$), so we can use \cite[Tables 1--5]{LT99} to study the composition factors of $V|_{H}$ for each $(G,L')$. In turn, this imposes restrictions on the decomposition of $V|_{X}$. But the possibilities for $V|_{X}$ are listed in Table \ref{tab:red} and in this way we arrive at the two special cases in the statement of Theorem \ref{t:main}. See Section \ref{s:final} for the details. (Notice that we adopt a similar approach in the proof of Theorem \ref{t:main0} below.)

\begin{ex}\label{e:e9}
To illustrate some of the above ideas, let us explain how we handle the case $(G,p)=(E_8,31)$. Recall that in Example \ref{e:e8} we reduced to the situation where 
$$V|_X = W(2)\oplus W(26)\oplus W(22) \oplus W(14),$$
which is compatible with the decomposition of $V|_{A}$. By following the approach in Example \ref{ex:g2}, we use {\sc Magma} to determine the action of $x$ on $V$ in terms of a Chevalley basis $\mathcal{B}$.  

Let $W = {\rm soc}(W(2)) = L_X(2)$ and let $\{w_2, w_0, w_{-2}\}$ be a standard basis of $W$ as above. First consider $w_2 \in C_V(x)$. Now $C_V(x)\cap E_2$ is $1$-dimensional (indeed, by inspecting Table \ref{tab:di} we see that there is a unique $d_i$ which is congruent to $2$ modulo $30$), spanned by the sum of the simple root vectors, so we must have
$$w_2 = a_1(e_1+e_2+e_3+e_4+e_5+e_6+e_7+e_8)$$
for some non-zero scalar $a_1 \in K$. Similarly, $w_0$ is contained in the $1$-dimensional space $\ker((x-1)^2) \cap E_0$ and by considering the relation $x(w_0) = w_0+w_2$ we take
$$w_0 = a_2(h_1 + 19h_2+4h_3+9h_4+28h_5+18h_6+10h_7+4h_8).$$

Finally, $w_{-2}$ is in the $2$-dimensional space $\ker((x-1)^3) \cap E_{-2}$ (note that $\xi^{-2} = \xi^{58}$ since $p=31$) and it follows that  
$$w_{-2} = a_3(8f_1+28f_2+f_3+10f_4+7f_5+20f_6+18f_7+f_8) + a_4e_{\a_0}$$
where $\a_0  = 2\a_1+3\a_2+4\a_3+6\a_4+5\a_5+4\a_6+3\a_7+2\a_8$ is the highest root. Note that $a_3 \ne 0$ since $w_{-2} \in \ker((x-1)^3) \setminus \ker((x-1)^2)$ and $e_{\a_0} \in C_V(x)$. 

By considering the action of $x$ on $W$ (see (\ref{e:xmat})) we quickly deduce that $a_2 = 16a_1$ and $a_3 = 4a_1$. Finally, one checks that the condition in (\ref{e:wm}) yields $a_4=0$, so setting $a_1=1$ we have
\begin{eqnarray*}
w_2 \!\!\! & = & \!\!\! e_1+e_2+e_3+e_4+e_5+e_6+e_7+e_8 \\
w_0 \!\!\! & = & \!\!\! 16(h_1 + 19h_2+4h_3+9h_4+28h_5+18h_6+10h_7+4h_8) \\
w_{-2} \!\!\! & = & \!\!\! 4(8f_1+28f_2+f_3+10f_4+7f_5+20f_6+18f_7+f_8)
\end{eqnarray*}
and it is easy to see that $w_2, w_0$ and $w_{-2}$ satisfy the relations
\begin{equation}\label{e:br}
[w_2,w_{-2}] = 2w_0,\;\; [w_2,w_0] = w_2,\;\; [w_0,w_{-2}] = w_{-2},
\end{equation}
and thus $W = \la w_2, w_0, w_{-2}\ra$ is an $\mathfrak{sl}_{2}$-subalgebra. If we set 
\begin{equation}\label{e:br0}
w_2'=w_2, \;\; w_0' = -2w_0,\;\; w_{-2}' = -w_{-2},
\end{equation}
then $(w_2', w_0', w_{-2}')$ is an $\mathfrak{sl}_{2}$-triple. Moreover, working mod $p$, we have
$$w_{-2}' = 92f_1+136f_2+182f_3+270f_4+220f_5+168f_6+114f_7+58f_8$$
and thus $(w_2', w_0', w_{-2}')$ is an $\mathfrak{sl}_{2}$-triple over $\Z$ (see the proof of \cite[Proposition 2.4]{T}). Since $X$ stabilizes $W$, it is contained in an $A_1$-type subgroup of $G$ by Proposition \ref{p:new1}. This completes the proof of Theorem \ref{t:main} for $G=E_8$ with $p=31$. 
\end{ex}

We close this section by presenting a proof of Theorem \ref{t:main0}. 

\vs

\noindent \emph{Proof of Theorem \ref{t:main0}.}
Let $V = {\rm Lie}(G)$ be the adjoint module for $G$. Seeking a contradiction, suppose $X < P$, where $P=QL$ is a proper parabolic subgroup of $G$ with unipotent radical $Q$ and Levi factor $L$. We may as well assume that $P$ is minimal with respect to the containment of $X$. In particular, if $\pi:P \to P/Q$ is the quotient map and we identify $L$ with $P/Q$, then $\pi(X)$ is not contained in a proper parabolic subgroup of $L'$.  Now $\pi(x)$ is a regular unipotent element of $L'$ (see \cite[Lemma 2.6]{TZ}). Writing $L'=L_1 \cdots L_t$, where each $L_i$ is a simple factor, let $\pi_i:L'\to L_i$ be the naturally defined projection map. Then $\pi_i(\pi(X)) < L_i$ contains a regular unipotent element of $L_i$ and does not lie in a proper parabolic subgroup of $L_i$.

If $L_i$ is of classical type, we apply the main theorem of \cite{ST1} to see that $\pi_i(\pi(X))$ is contained in an $A_1$-type subgroup of $L_i$. On the other hand, if $L_i$ is of exceptional type, then $G$ is of type $E_n$ and $L_i$ is of type $E_m$ for $m<n$. In this case, we apply Theorem \ref{t:red} to conclude once again that $\pi_i(\pi(X))$ is contained in an $A_1$-type subgroup of $L_i$ for all relevant values of $p$. In particular, in all cases we deduce that $\pi(X)$ lies in an $A_1$-type subgroup $H$ of $L'$.

Now the $KH$-composition factors of $V|_{H}$ can be read off from the information in \cite[Tables 1--5]{LT99} and we can use this to determine the $KX$-composition factors of $V|_{X}$ (to do this, note that we may set all $q_i=1$ in terms of the notation in \cite[Tables 1--5]{LT99}). Indeed, each composition factor of $V|_{P}$ is an irreducible $KL'$-module (the unipotent radical $Q$ acts trivially on the $KP$-composition factors of $V|_{P}$), so the decompositions of $V|_{X}$ and $V|_{H}$ have to be compatible. But the decomposition of $V|_{X}$ is given in Table \ref{t:dec} and in this way we will reach a contradiction.

To see this, first observe that $X$ has at least one trivial composition factor on $V$, coming from $Z(L)$. By inspecting Table \ref{t:dec}, this immediately implies that
$$(G,p) \in \{(F_4,13), (E_6,13), (E_8,37)\}.$$
Suppose $(G,p) = (E_8,37)$. From Table \ref{t:dec}, the $KX$-composition factors of $V|_{X}$ are as follows:
\begin{equation}\label{e:e8dee}
L_X(34)^2, L_X(26)^2, L_X(22)^2, L_X(20), L_X(14)^2, L_X(10), L_X(8), L_X(2)^2, L_X(0).
\end{equation}
By inspecting \cite[Table 5]{LT99}, using the fact that $V|_{X}$ has a unique trivial composition factor, we deduce that $L' = A_4A_2A_1$, $A_4A_3$ or $D_5A_2$. However, in each of these cases we see that $V|_{X}$ has an $L_X(6)$ composition factor, which is incompatible with (\ref{e:e8dee}). The other two possibilities for $(G,p)$ can be eliminated in a similar fashion. For example, if $(G,p) = (E_6,13)$ then the composition factors of $V|_{X}$ are 
$$L_X(10)^3, L_X(8)^3, L_X(4), L_X(2)^4, L_X(0).$$ 
By inspecting \cite[Table 3]{LT99}, just considering trivial composition factors, we deduce that $L' =A_2^2A_1$, $A_4A_1$ or $D_5$, but in each case we find that $V|_{X}$ has two or more $L_X(4)$ factors. This is a contradiction.

\vs

As mentioned above, the proof of Theorem \ref{t:red} will be given in Sections \ref{s:g2}--\ref{s:e8}, where we carry out Steps 1 and 2 (elimination and extension) for each group in turn. We handle Step 3 in Section \ref{s:final}, thus completing the proof of Theorem \ref{t:main}.

\section{The case $G=G_2$}\label{s:g2}

We begin the proof of Theorem \ref{t:main} by handling the case $G=G_2$. As noted in Remark \ref{r:main}(c), the result in this case can be deduced from the proof of \cite[Lemma 3.1]{ST2} (it also follows from Kleidman's classification of the maximal subgroups of $G_2(p)$ in \cite{K}).

\begin{thm}\label{t:g2}
Let $G$ be a simple algebraic group of type $G_2$ over an algebraically closed field of characteristic $p>0$. Let $X = {\rm PSL}_{2}(p)$ be a subgroup of $G$ containing a regular unipotent element $x$ of $G$. Then $X$ is contained in an $A_1$-type subgroup of $G$.
\end{thm}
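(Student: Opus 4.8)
The plan is to follow the three-step strategy outlined in Section~\ref{ss:me}, specialised to $G=G_2$, where the Coxeter number is $h=6$ and hence $p\geqs 7$. Since $x$ is regular unipotent, Proposition~\ref{p:0}(i) provides an $A_1$-type subgroup $A$ with $x\in A$, and after replacing $X$ by a suitable conjugate (Lemma~\ref{l:key}) we may assume $t(\xi)\in X$, so that the eigenvalues of $s$ on $V={\rm Lie}(G)$ can be read off from the known action of $A$ on $V$ recorded in the proof of Proposition~\ref{p:11}. In particular $\dim V=14$, and by (\ref{e:cvx}) together with Table~\ref{tab:di} the eigenvalues of $s$ on $C_V(x)$ are $\xi^{10}$ and $\xi^2$.

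First I would carry out the elimination step. Reading the Jordan form of $x$ on $V$ from \cite{Lawther} gives $[J_{11},J_3]$ when $p\geqs 11$ and $[J_7^2]$ when $p=7$, and I would combine this with Lemma~\ref{l:jf}, Corollary~\ref{c:bound}, the $s$-eigenvalue data, and the involution traces (Proposition~\ref{p:trace}, Lemmas~\ref{l:2} and~\ref{l:3}) to reduce the admissible decompositions of $V|_X$ to the single entry of Table~\ref{t:dec}. For $p\geqs 11$ the dimension and Jordan-form constraints already force a sum of simple modules, and the only competitor $U\oplus L_X(2)$ (which is admissible precisely when $p=11$, where $\xi^{10}=1$ and so the eigenvalues no longer separate $U$ from $L_X(10)$) is killed by the involution trace: one has ${\rm tr}(V,x_2)=-2$, whereas $U\oplus L_X(2)$ would contribute $2$. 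Hence $V|_X=L_X(10)\oplus L_X(2)$. For $p=7$ the module $V|_X$ is projective of dimension $2p$, so it is one of $W(2)$, $W(4)$, or a sum of copies of $U$ and $L_X(6)$; the eigenvalue condition eliminates the latter possibilities, and since $W(2)$ and $W(4)$ share the same $s$-eigenvalues on $C_V(x)$ I would separate them via ${\rm tr}(V,x_2)=-2$ (Lemma~\ref{l:3} gives $-2$ for $W(2)$ and $+2$ for $W(4)$), leaving $V|_X=W(2)$.

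Next comes the extension step. In both cases the socle of $V|_X$ contains the simple summand $W=L_X(2)$, which is an $X$-submodule of $V$. I would fix a \emph{standard basis} $\{w_2,w_0,w_{-2}\}$ of $W$ and, following Example~\ref{ex:g2} and using {\sc Magma}, express each $w_i$ in terms of the Chevalley basis with undetermined coefficients drawn from the relevant intersections of $\ker((x-1)^j)$ with the eigenspace $E_i$. Imposing the prescribed action of $x$ on $W$, together with the relation~(\ref{e:wm}) coming from the regularity of the opposite unipotent element $x'$, should pin down these coefficients and show that $W$ is an $\mathfrak{sl}_2$-subalgebra with $w_2=\sum_i e_i$ and a suitable rescaling of $w_{-2}$ lying in $\sum_i\Z f_i$. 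Proposition~\ref{p:new1} then identifies the stabilizer $H$ of $W$ in $G$ as an $A_1$-type subgroup, and since $X$ stabilizes $W$ we conclude $X\leqs H$, as required.

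The main obstacle is concentrated in the boundary primes $p=7$ and $p=11$, where the eigenvalues of $s$ on $C_V(x)$ fail to distinguish the competing indecomposable summands ($W(2)$ versus $W(4)$, and $U$ versus $L_X(10)$); this is exactly where the trace computations of Section~\ref{ss:trace} become indispensable. The extension step is computationally light here, since $\dim G=14$ is small, and I do not expect the more delicate appeal to Proposition~\ref{p:new2}, needed when $w_{-2}$ cannot be forced into $\sum_i\Z f_i$, to be required for $G_2$. Reassuringly, the outcome admits no exceptional cases, which is consistent with the known classification recalled in Remark~\ref{r:main}(c).
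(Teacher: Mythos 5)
Your overall strategy coincides with the paper's: the same elimination via Jordan forms, $s$-eigenvalues on $C_V(x)$ and involution traces (reaching $V|_X=L_X(10)\oplus L_X(2)$ for $p\geqs 11$ and $V|_X=W(2)$ for $p=7$, with $U\oplus L_X(2)$ at $p=11$ and $W(4)$ at $p=7$ killed by ${\rm tr}(V,x_2)=-2$), followed by the same extension step on the summand $W=L_X(2)$ of the socle. That part is sound.

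However, there is a genuine gap in your treatment of $p=11$, precisely where you assert that Proposition \ref{p:new2} will not be needed. For $p=11$ one has $\xi^{10}=1$, so the highest root vector $e_{32}$ lies in $E_0$ and the space $\ker((x-1)^2)\cap E_0$ is $2$-dimensional; likewise $e_{31}\in E_{-2}$ and $\ker((x-1)^3)\cap E_{-2}$ is $2$-dimensional. The constraints coming from the action of $x$ on $W$ only yield $a_5=2a_3$, and the relation (\ref{e:wm}) does \emph{not} force the remaining free parameter to vanish: one finds $w_0=8(h_1+9h_2)+\gamma e_{32}$ and $w_{-2}=5f_1+f_2+2\gamma e_{31}$ with $\gamma\in K$ arbitrary, and the relations (\ref{e:br}) hold for every $\gamma$. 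Thus $W$ is an $\mathfrak{sl}_2$-subalgebra, but $w_{-2}$ cannot be forced into $\sum_i\Z f_i$, so Proposition \ref{p:new1} is not directly applicable. The argument is rescued exactly by Proposition \ref{p:new2} (applied with $y=e_{32}$ and $z=e_{31}$, after passing to the $\mathfrak{sl}_2$-triple over $\Z$ obtained from $(w_2,-2w_0,-w_{-2})$ at $\gamma=0$), which is the route the paper takes; your proposal explicitly disclaims this step and therefore leaves the $p=11$ case unfinished. The cases $p=7$, $p=13$ (where (\ref{e:wm}) does kill the extra coefficient) and $p\geqs 17$ (where the relevant spaces are $1$-dimensional) go through as you describe.
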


\begin{proof}
The Coxeter number of $G$ is $7$, so we have $p \geqs 7$. Let $V = {\rm Lie}(G)$ be the adjoint module for $G$ and fix a Chevalley basis for $V$ as in (\ref{e:chev}). We will use  the notation introduced in Section \ref{ss:me}. In particular, $\la \bar{s}, x\ra$ is a Borel subgroup of $X$, where $\bar{s} = t(\xi) = sZ$ and 
\begin{equation}\label{e:eigg2}
\{\xi^{10},\xi^{2}\}
\end{equation} 
are the eigenvalues of $s \in {\rm SL}_{2}(p)$ on $C_V(x)$, where $\mathbb{F}_{p}^{\times} = \la \xi \ra$. Let $E_i$ be the $\xi^i$-eigenspace of $s$ on $V$ and recall from Section \ref{ss:a1} that we may assume $x$ is obtained by exponentiating the regular nilpotent element $e = e_{1} + e_{2} \in V$ (that is, we will assume $x = {\rm exp}({\rm ad}(e))$). According to \cite[Table 2]{Lawther}, the Jordan form of $x$ on $V$ is as follows:
\begin{equation}\label{e:200}
\left\{\begin{array}{ll}
\mbox{$[J_{11},J_{3}]$} & p \geqs 11 \\
\mbox{$[J_{7}^2]$} & p=7. 
\end{array}\right.
\end{equation}
We will use the notation $U$ and $W(i)$ for the projective indecomposable $KX$-modules defined in (\ref{e:u}) and (\ref{e:wi}), respectively. 

\vs

\noindent \emph{Case 1. $V|_{X}$ is semisimple} 

\vs

First assume $V|_{X}$ is semisimple and recall that $x$ has Jordan form $[J_{m+1}]$ on $L_X(m)$ (for $0 \leqs m <p$). In view of (\ref{e:200}), it follows that 
$$V|_{X} = \left\{\begin{array}{ll}
L_X(10) \oplus L_X(2) & p \geqs 11 \\
L_X(6)^2 & p = 7.
\end{array}\right.$$
If $p=7$ then the above decomposition implies that $\xi^6$ is an eigenvalue of $s$ on $C_V(x)$, but this is not compatible with (\ref{e:eigg2}).

Now assume $p \geqs 11$, so 
$V|_{X} = L_X(10) \oplus L_X(2)$. As in Section \ref{ss:me}, let 
$\{w_2,w_0,w_{-2}\}$ be a standard basis for the summand $W = L_X(2)$, so $w_i \in E_i$ and the action of $x$ on $W$ is given by the matrix in (\ref{e:xmat}). Our goal is to show that $W$ is an $\mathfrak{sl}_2$-subalgebra with $w_2=e$ and $w_{-2} \in \sum_{i=1}^{2}\Z f_i$. Furthermore, we seek integers $\l,\mu$ so that $(w_2,\l w_0, \mu w_{-2})$ is an $\mathfrak{sl}_2$-triple over $\Z$, which will allow us to apply Proposition \ref{p:new1}. 

For $p \geqs 17$ we find that each space
\begin{equation}\label{e:xe}
\ker(x-1) \cap E_2,\;\; \ker((x-1)^2) \cap E_0,\;\; \ker((x-1)^3) \cap E_{-2}
\end{equation}
is $1$-dimensional, which gives us  
$$w_2 = a_1(e_1 + e_2),\; w_0 = a_2(h_1 + 13h_2),\; w_{-2} = a_3(4f_1+f_2)$$
for some non-zero scalars $a_i \in K$ (in the expressions for $w_0$ and $w_{-2}$, the specific coefficients of the $h_i$ and $f_i$ will depend on the characteristic $p$; the coefficients presented here are for $p=17$). If we set $a_1=1$ then by considering the action of $x$ on $V$ we deduce that $a_2=14$ and $a_3 = 7$. Now $w_2, w_0$ and $w_{-2}$ satisfy the relations in (\ref{e:br}), so we get an $\mathfrak{sl}_2$-triple $(w_2',w_0',w_{-2}')$ 
as in (\ref{e:br0}). Here $w_{-2}'=-w_{-2} = 6f_1+10f_2$ (for $p=17$) and thus $(w_2',w_0',w_{-2}')$ is an $\mathfrak{sl}_2$-triple over $\Z$ (see the proof of \cite[Proposition 2.4]{T}). Finally, by applying Proposition \ref{p:new1}, we conclude that $X$ is contained in an $A_1$-type subgroup of $G$.

Next assume $p=13$. Once again $\ker(x-1) \cap E_2$ and $\ker((x-1)^2) \cap E_0$ are $1$-dimensional, but now $\ker((x-1)^3) \cap E_{-2}$ is $2$-dimensional, spanned by the vectors $11f_1+f_2$ and $e_{32}$ (here we use the notation $e_{32}$ for $e_{\gamma}$ with $\gamma = 3\alpha_1+2\alpha_2$). Therefore
$$w_2 = a_1(e_1 + e_2),\; 
w_0  = a_2(h_1 + 6h_2),\;
w_{-2}  = a_3(11f_1+f_2)+a_4e_{32}
$$
for some $a_i \in K$. By considering the action of $x$ on $W$ we deduce that $a_2 = 10a_1$ and $a_3 = 3a_1$. Moreover, (\ref{e:wm}) implies that  $a_4=0$ and by arguing as above, setting $a_1=1$ and using Proposition \ref{p:new1}, we deduce that $X$ is contained in an $A_1$-type subgroup of $G$.

Now suppose $p=11$. Here we have
$$
w_2  = a_1(e_1 + e_2),\; 
w_0  = a_2(h_1 + 9h_2)+a_3e_{32}, \; 
w_{-2} = a_4(5f_1+f_2)+a_5e_{31}
$$
and by considering the action of $x$ on $W$ we deduce that $a_2 = 8a_1$, $a_4 = a_1$ and $a_5 = 2a_3$. We may as well set $a_1 = 1$, so 
$$
w_2  = e_1 + e_2,\; 
w_0  = 8(h_1 + 9h_2)+\gamma e_{32},\;
w_{-2}  = 5f_1+f_2+2\gamma e_{31}
$$
for some $\gamma \in K$. One now checks that the relations in (\ref{e:br}) are satisfied (for all $\gamma$), so $W = \la w_2,w_0,w_{-2}\ra$ is an $\mathfrak{sl}_{2}$-subalgebra. Moreover, if we take 
$$e=w_2, \; h=-2(8(h_1+9h_2)) = 6h_1+4h_2,\; f = -(5f_1+f_2) = 6f_1+10f_2,$$ 
then $(e,h,f)$ is an $\mathfrak{sl}_2$-triple over $\Z$ and we can apply Proposition \ref{p:new2} (with $y = e_{32}$ and $z=e_{31}$). It follows that  the stabilizer of $W$ in $G$ is an $A_1$-type subgroup.

\vs

\noindent \emph{Case 2. ${\rm rad}(V|_{X}) \ne 0$, $p \geqs 11$} 

\vs

To complete the proof of the theorem we may assume that ${\rm rad}(V|_{X}) \neq 0$. Suppose $p \geqs 11$ and $W$ is a reducible indecomposable summand of $V|_{X}$. If $p \geqs 13$ then $\dim W \geqs 12$ (see Corollary \ref{c:bound}) and thus Lemma \ref{l:jf} implies that $x$ has a Jordan block of size $n \geqs 12$ on $W$, but this is incompatible with (\ref{e:200}). Now assume $p=11$. Here (\ref{e:200}) implies that $W$ has at least three composition factors (if there were only two, then Lemma \ref{l:jf} and Corollary \ref{c:2step} would imply that $x$ has Jordan form $[J_{10}]$ or $[J_{11},J_{1}]$ on $W$, which contradicts  (\ref{e:200})). By Lemma \ref{l:jf}, it follows that $x$ has Jordan form $[J_{11},J_{i}]$ on $W$ with $i \in \{0,3\}$, so $\dim W \in \{11,14\}$. By considering Theorem \ref{t:structure}, it is easy to see that $i=0$ is the only possibility, so $W = U$ is projective and thus 
$$V|_{X} = U \oplus L_X(2).$$
However, this implies that an involution $x_2 \in X$ has trace $2$ on $V$ (see Section \ref{ss:trace}), which is incompatible with Proposition \ref{p:trace}. This is a contradiction.

\vs

\noindent \emph{Case 3. ${\rm rad}(V|_{X}) \ne 0$, $p=7$} 

\vs

Finally, let us assume $p=7$. Let $P = \langle x \rangle$ be a Sylow $p$-subgroup of $X$ and observe that $V|_{P}$ is projective. Then \cite[Corollary 3, Section 9]{Alperin} implies that $V|_{X}$ is projective and thus each indecomposable summand is also projective. Since the eigenvalues of $s$ on $C_V(x)$ are $\{\xi^{4}, \xi^2\}$, we deduce that $V|_{X} = W(2)$ or $W(4)$. In fact, by considering the trace of $x_2$, we see that $V|_{X} = W(2)$ is the only option. This is compatible with the decomposition of $V$ with respect to an $A_1$-type subgroup of $G$ containing a regular unipotent element (see Table \ref{t:dec}).

Let $W$ be the $L_X(2)$ summand in the socle of $V|_{X}$ and let $\{w_2,w_0,w_{-2}\}$ be a standard basis. The spaces $\ker(x-1) \cap E_2$ and $\ker((x-1)^2) \cap E_0$ are $1$-dimensional, whereas $\ker((x-1)^3) \cap E_{-2}$ is $2$-dimensional and we get  
$$
w_2  = a_1(e_{1} + e_{2}), \; 
w_0  = a_2(h_1+4h_2), \; 
w_{-2} = a_3(2f_1+f_2) + a_4e_{32}
$$
for some $a_i \in K$. Set $a_1=1$, so $w_2=e$. By considering the action of $x$ on $W$ we deduce that $a_2 = a_3 = 4$. Moreover, (\ref{e:wm}) implies that $a_4=0$ and we deduce that $W = \la w_2,w_0,w_{-2}\ra$ is an $\mathfrak{sl}_2$-subalgebra and the relations in (\ref{e:br}) are satisfied. As before, the desired result now follows by applying Proposition \ref{p:new1}. 

\vs

This completes the proof of Theorem \ref{t:g2}.
 \end{proof}

\section{A reduction for $G=F_4$}\label{s:f4}

In this section our goal is to establish Theorem \ref{t:red} when $G=F_4$. The proof of Theorem \ref{t:main} in this case will be completed in Section \ref{s:final}. Our main result is the following.

\begin{thm}\label{t:f4}
Let $G$ be a simple algebraic group of type $F_4$ over an algebraically closed field of characteristic $p >0$. Let $X = {\rm PSL}_{2}(p)$ be a subgroup of $G$ containing a regular unipotent element $x$ of $G$ and set $V = {\rm Lie}(G)$. Then one of the following holds:
\begin{itemize}\addtolength{\itemsep}{0.2\baselineskip}
\item[{\rm (i)}] $X$ is contained in an $A_1$-type subgroup of $G$;
\item[{\rm (ii)}] $p=13$, $V|_{X} = W(10) \oplus W(2)$ and $X$ stabilizes a non-zero subalgebra of $\la e_{\a} \mid \a \in \Phi^{+}(G)\ra$.
\end{itemize}
\end{thm}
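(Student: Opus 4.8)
The plan is to follow the general Step 1--Step 2 strategy (elimination and extension) laid out in Section \ref{ss:me}, specialised to $G = F_4$, where $h = 12$ and so $p \geqs 13$. By \eqref{e:dii} and Table \ref{tab:di}, the eigenvalues of $s$ on $C_V(x)$ are $\{\xi^{22},\xi^{14},\xi^{10},\xi^2\}$, and the Jordan form of $x$ on $V$ (read from \cite[Table 5]{Lawther}) is $[J_{23},J_{15},J_{11},J_3]$ for $p \geqs 23$, degenerating to forms with full blocks $J_p$ for the smaller primes $p \in \{13,17,19\}$. The first task is to enumerate the possible decompositions of $V|_X$ into indecomposable $KX$-modules (using the classification recalled in Section \ref{s:rep}), and then to eliminate all but the entries appearing in Table \ref{t:dec} together with the single exceptional case $p=13$ with $V|_X = W(10)\oplus W(2)$.

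For the \emph{elimination} step, I would proceed exactly as in Example \ref{e:e8}. The decomposition of $V|_X$ must be simultaneously compatible with (a) the Jordan form of $x$ on $V$ via Lemma \ref{l:jf}, (b) the prescribed set of eigenvalues of $s$ on $C_V(x)$ via Lemma \ref{l:eiggg} and \eqref{e:cvx}, and (c) the admissible traces of elements of order $2$ and $3$ recorded in Proposition \ref{p:trace} (Table \ref{t:trace}), computed summand-by-summand using Lemmas \ref{l:1}, \ref{l:2} and \ref{l:3}. Running this for each $p \in \{13,17,19\}$ and for $p \geqs 23$ should pin down $V|_X$ to the entry in Table \ref{t:dec} in every case except $p=13$, where the trace and eigenvalue constraints do not distinguish the semisimple-compatible decomposition from $W(10)\oplus W(2)$; this ambiguity is precisely the origin of alternative (ii). (The case $p=13$ is the one with a trivial $Z(L)$-factor flagged in the proof of Theorem \ref{t:main0}, which is consistent with its exceptional behaviour here.)

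For the \emph{extension} step, in each surviving case compatible with Table \ref{t:dec} I would isolate the summand $W = L_X(2)$ in ${\rm soc}(V|_X)$, fix a standard basis $\{w_2,w_0,w_{-2}\}$ as in Section \ref{ss:me}, and compute these vectors explicitly in terms of the Chevalley basis $\mathcal{B}$ using {\sc Magma} as in Example \ref{ex:g2}. Writing $w_2 \in \ker(x-1)\cap E_2$, $w_0 \in (\ker((x-1)^2)\setminus\ker(x-1))\cap E_0$ and $w_{-2} \in (\ker((x-1)^3)\setminus\ker((x-1)^2))\cap E_{-2}$ with undetermined coefficients, I would impose the relations forced by the action \eqref{e:xmat} of $x$ on $W$, together with the abelian-centraliser relation \eqref{e:wm}, to reduce $w_2,w_0,w_{-2}$ to honest generators of an $\mathfrak{sl}_2$-subalgebra satisfying \eqref{e:br}. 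Rescaling as in \eqref{e:br0} to an $\mathfrak{sl}_2$-triple over $\Z$, I would conclude via Proposition \ref{p:new1} (or Proposition \ref{p:new2} if $w_{-2}$ cannot be forced into $\sum_i \Z f_i$) that the stabiliser of $W$ is an $A_1$-type subgroup containing $X$, giving conclusion (i).

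Finally, in the exceptional case $p=13$ with $V|_X = W(10)\oplus W(2)$, the goal is not to build an $\mathfrak{sl}_2$-subalgebra but to verify that $X$ stabilises a non-zero subalgebra of $\la e_\a \mid \a\in\Phi^+(G)\ra$; here I would exhibit such an invariant subalgebra directly from the $E_i$-eigenspace data, for instance by locating an $X$-submodule of the nilradical whose bracket closes. The main obstacle I anticipate is not the extension step (which is a finite, machine-assisted computation) but the elimination step at $p=13$: showing rigorously that the trace and eigenvalue invariants genuinely fail to separate $W(10)\oplus W(2)$ from the Table \ref{t:dec} decomposition, so that this case must be carried forward to the parabolic analysis of Section \ref{s:final} rather than being resolved here. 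This is exactly why the theorem is stated as a dichotomy with the explicit leftover case (ii), and why the completion of the $F_4$ argument is deferred.
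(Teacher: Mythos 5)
Your overall Step 1/Step 2 framework matches the paper's, and your treatment of $p\geqs 17$ is essentially right (modulo the fact that for $p\in\{17,19,23\}$ one must also rule out reducible non-projective indecomposable summands via Theorem \ref{t:structure} and the self-duality of $V|_{X}$, not just choose among simple and projective ones). But your account of how case (ii) arises at $p=13$ contains a genuine misunderstanding. At $p=13$ the module $V|_{X}$ is projective (the Jordan form of $x$ is $[J_{13}^4]$), so there is no ``semisimple-compatible decomposition'' in play; the eigenvalues of $s$ on $C_V(x)$ reduce the options to $W(10)\oplus W(2)$, $W(2)^2$ and $W(10)^2$, and the traces of $x_2$ and $x_3$ fail to separate these three (each gives $-4$, resp.\ $-2$, in all three cases). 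The paper eliminates $W(2)^2$ and $W(10)^2$ by comparing the eigenvalue multiplicities of an element of order $7$ in $X$ with those of elements of order $7$ in $G$ (Remark \ref{r:AJL}), a tool absent from your toolkit. The outcome is that $V|_{X}=W(10)\oplus W(2)$ is forced, and this is exactly the decomposition in Table \ref{t:dec} -- so the elimination step does \emph{not} produce the dichotomy you describe.

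The dichotomy between (i) and (ii) is instead produced in the extension step. For $p=13$ the spaces $\ker(x-1)\cap E_2$, $\ker((x-1)^2)\cap E_0$ and $\ker((x-1)^3)\cap E_{-2}$ have dimensions $2$, $2$ and $4$, so the standard basis of the $L_X(2)$ summand $W$ involves eight undetermined coefficients $a_1,\ldots,a_8$. The relations coming from the action of $x$ on $W$ together with (\ref{e:wm}) force either $a_2=a_6=a_8=0$ (when $a_1\ne 0$), in which case $W$ is an $\mathfrak{sl}_2$-subalgebra and Proposition \ref{p:new1} gives conclusion (i), or $a_1=0$, in which case $W$ lies entirely in $\la e_\a\mid\a\in\Phi^+(G)\ra$ and one checks directly that it is a subalgebra there, giving (ii). Your plan, which treats the whole $p=13$, $W(10)\oplus W(2)$ situation as landing in (ii), would both miss the subgroups that genuinely lie in $A_1$-type subgroups (which have the same restriction $V|_{X}$) and fail to identify the actual branch point, namely the vanishing or not of the coefficient of $\sum_i e_i$ in $w_2$, rather than any ambiguity in $V|_{X}$.
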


\begin{proof}
Here $p \geqs 13$ and we set up the standard notation as before. In particular, 
\begin{equation}\label{e:eigf4}
\{\xi^{22},\xi^{14},\xi^{10},\xi^{2}\}
\end{equation} 
are the eigenvalues of $s$ on $C_V(x)$, where $\mathbb{F}_{p}^{\times} = \la \xi \ra$, and  
\begin{equation}\label{e:2}
\left\{\begin{array}{ll}
\mbox{$[J_{23},J_{15},J_{11},J_{3}]$} & p \geqs 23 \\
\mbox{$[J_{19}^2,J_{11},J_{3}]$} & p=19 \\
\mbox{$[J_{17}^2,J_{15},J_{3}]$} & p=17 \\
\mbox{$[J_{13}^4]$} & p=13 
\end{array}\right.
\end{equation}
is the Jordan form of $x$ on $V$ (see \cite[Table 4]{Lawther}). We may assume that $x$ is obtained by exponentiating the regular nilpotent element $e = e_{1} + e_{2} +e_{3} +e_{4}$ in $V$, with respect to a Chevalley basis for $V$ as in (\ref{e:chev}). It will also be useful to note that $V|_{X}$ is self-dual. 

\vs

\noindent \emph{Case 1. $V|_{X}$ is semisimple} 

\vs

If $p \in \{13,17,19\}$ then (\ref{e:2}) implies that  
$$V|_{X} = \left\{\begin{array}{ll}
L_X(18)^2 \oplus L_X(10) \oplus L_X(2) & p = 19 \\
L_X(16)^2 \oplus L_X(14) \oplus L_X(2) & p = 17 \\
L_X(12)^4 & p = 13 
\end{array}\right.$$
but none of these decompositions are compatible with the eigenvalues of $s$ on $C_V(x)$ given in (\ref{e:eigf4}). For example, if $p=19$ then the given decomposition implies that the relevant eigenvalues are $\{\xi^0,\xi^0, \xi^{10},\xi^2\}$, but this contradicts (\ref{e:eigf4}).

Now assume $p \geqs 23$, so 
$$V|_{X} = L_X(22) \oplus L_X(14) \oplus L_X(10) \oplus L_X(2).$$
Let $W$ be the $L_X(2)$ summand and let $\{w_2,w_0,w_{-2}\}$ be a standard basis for $W$ as in Section \ref{ss:me}, so $w_i \in E_i$ (the $\xi^i$-eigenspace of $s$ on $V$) and the action of $x$ and $x'$ on $W$ is given by the matrices in (\ref{e:xmat}) and (\ref{e:xxmat}), respectively, where $\la \bar{s},x'\ra$ is the opposite Borel subgroup of $X$. If $p \geqs 29$ then the spaces in (\ref{e:xe}) are $1$-dimensional and we get
\begin{eqnarray*}
w_2  \!\!\! & = & \!\!\! a_1(e_1 + e_2+e_3+e_4) \\
w_0  \!\!\! & = & \!\!\! a_2(h_1 + 23h_2+4h_3+6h_4) \\
w_{-2} \!\!\! & = & \!\!\! a_3(5f_1+28f_2+20f_3+f_4) 
\end{eqnarray*}
for some $a_i \in K$ (in the expressions for $w_0$ and $w_{-2}$, the specific coefficients depend on the characteristic $p$; the ones given here are for $p=29$). If we set $a_1=1$ then we can use the action of $x$ on $V$ to deduce that $a_2=18$ and $a_3=13$. Moreover, the relations in (\ref{e:br}) are satisfied and it follows that $(w_2',w_0',w_{-2}')$ is an $\mathfrak{sl}_2$-triple, where these elements are defined in (\ref{e:br0}). Now
$$w_{-2}' = -w_{-2} = -13(5f_1+28f_2+20f_3+f_4) = 22f_1+42f_2+30f_3+16f_4$$
working mod $p$ (for $p=29$), so $(w_2',w_0',w_{-2}')$ is an $\mathfrak{sl}_2$-triple over $\Z$ (see the proof of \cite[Proposition 2.4]{T}). By applying Proposition \ref{p:new1}, we conclude that $X$ is contained in an $A_1$-type subgroup of $G$.

Now suppose $p=23$. Here
\begin{eqnarray*}
w_2 \!\!\! & = & \!\!\! a_1(e_1+e_2+e_3+e_4) \\
w_0 \!\!\! & = & \!\!\! a_2(h_1 + 4h_2+16h_3+7h_4)+a_3e_{2342} \\
w_{-2} \!\!\! & = & \!\!\! a_4(10f_1+17f_2+22f_3+f_4) +a_5e_{1342} 
\end{eqnarray*}
for some $a_i \in K$ (we use the notation $e_{2342}$ for 
$e_{\gamma}$ with $\gamma = 2\a_1+3\a_2+4\a_3+2\a_4$, and similarly for $e_{1342}$). By considering the action of $x$ on $W$ we deduce that $a_2=12a_1$, $a_4 = 7a_1$ and $a_5=2a_3$. 
Setting $a_1=1$ we get
\begin{eqnarray*}
w_2 \!\!\! & = & \!\!\! e_1+e_2+e_3+e_4 \\
w_0 \!\!\! & = & \!\!\! 12(h_1 + 4h_2+16h_3+7h_4)+\gamma e_{2342} \\
w_{-2} \!\!\! & = & \!\!\! 7(10f_1+17f_2+22f_3+f_4) +2\gamma e_{1342} 
\end{eqnarray*}
for some $\gamma \in K$, and one can check that the relations in (\ref{e:br}) are satisfied. In particular, $W$ is an $\mathfrak{sl}_2$-subalgebra of $V$. Set 
$$e=w_2, \; h = -2(12(h_1 + 4h_2+16h_3+7h_4)) = 22h_1+15h_2+14h_3+9h_4$$
and
$$f = -(7(10f_1+17f_2+22f_3+f_4)) = 22f_1+19f_2+7f_3+16f_4.$$
Then $(e,h,f)$ is an $\mathfrak{sl}_2$-triple over $\Z$ and by applying Proposition \ref{p:new2} (with $y = e_{2342}$ and $z = e_{1342}$) we deduce that the stabilizer of $W$ in $G$ is an $A_1$-type subgroup.

\vs

\noindent \emph{Case 2. ${\rm rad}(V|_{X}) \neq 0$, $p \geqs 19$} 

\vs

For the remainder we may assume that ${\rm rad}(V|_{X}) \neq 0$. First assume $p \geqs 19$. By arguing as in Case 2 in the proof of Theorem \ref{t:g2}, it is straightforward to reduce to the case $p=19$. For example, suppose $p=23$ and $W$ is a reducible indecomposable summand of $V|_{X}$. The Jordan form of $x$ on $V$ (see (\ref{e:2})) implies that $W$ has at least three composition factors and we can use Lemma \ref{l:jf} to see that $x$ has Jordan form $[J_{23},J_{i}]$ on $W$ with $i \in \{0,3,11,15\}$, so $\dim W \in \{23,26,34,38\}$. Using Theorem \ref{t:structure}, we deduce that $i=0$ is the only option, so 
$$V|_{X} = U \oplus L_X(14) \oplus L_X(10) \oplus L_X(2).$$
But this implies that an involution $x_2 \in X$ has trace $0$ on $V$, which is incompatible with Proposition \ref{p:trace}. 

Now assume $p=19$. Suppose $W$ is a reducible non-projective indecomposable summand of $V|_{X}$. By combining Lemma \ref{l:jf} and Theorem \ref{t:structure} we deduce that $x$ has Jordan form $[J_{19}^2, J_{11}]$ or $[J_{19}^2, J_{3}]$ on $W$, so there is a unique such summand (and the other summand is simple). However, this is incompatible with the self-duality of $V|_{X}$. For example, if $x$ has Jordan form $[J_{19}^2,J_3]$ on $W$, then $V|_{X} = W \oplus L_X(10)$ and Theorem \ref{t:structure} implies that 
$${\rm soc}(W) = L_X(0) \oplus L_X(2) \oplus L_X(4),\;\; W/{\rm soc}(W) = L_X(16) \oplus L_X(14)$$
(up to duality) so $V|_{X}$ is not self-dual. 

Therefore, we may assume that each indecomposable summand is either simple or projective, so the possibilities for $V|_{X}$ are as follows:
$$\left\{\begin{array}{l}
U \oplus L_X(18) \oplus L_X(10) \oplus L_X(2) \\
U^2 \oplus L_X(10) \oplus L_X(2) \\
W(i) \oplus L_X(10) \oplus L_X(2)
\end{array}\right.$$
with $i \in \{2,4,\ldots, 16\}$. As in (\ref{e:eigf4}), the eigenvalues of $s$ on $C_V(x)$ are 
\[
\{\xi^4, \xi^{14},\xi^{10},\xi^2\}.
\] 
Since $s$ has eigenvalues $1$ and $\{\xi^{i},\xi^{-i}\}$ on $C_U(x)$ and $C_{W(i)}(x)$, respectively (see Lemma \ref{l:eiggg}), it follows that 
$$V|_{X} = W(i)\oplus L_X(10) \oplus L_X(2)$$ 
with $i \in \{4,14\}$. The case $i=4$ can be ruled out by considering the trace of $x_2$; hence $i=14$ and $V|_{X}$ is compatible with the containment of $X$ in an $A_1$-type subgroup of $G$ (see Table \ref{t:dec}). We need to show that $X$ is contained in such a subgroup. To do this we can repeat the argument in Case 1 for $p \geqs 29$ (the details are entirely similar).

\vs

\noindent \emph{Case 3. ${\rm rad}(V|_{X}) \neq 0$, $p=17$} 

\vs

Now assume $p=17$. Suppose $W$ is a reducible non-projective indecomposable summand of $V|_{X}$. It is easy to check that the Jordan form of $x$ on $W$ is either $[J_{17}^2,J_3]$ or $[J_{17},J_{15}]$, so there is a unique such summand. If $x$ has Jordan form $[J_{17}^2,J_3]$ on $W$ then Theorem \ref{t:structure} implies that $V|_{X} = W \oplus L_X(14)$ (up to duality) where 
$${\rm soc}(W) = L_X(0) \oplus L_X(2) \oplus L_X(4),\;\; W/{\rm soc}(W) = L_X(16) \oplus L_X(14),$$
but this is incompatible with the self-duality of $V|_{X}$. Similarly, in the other case we have $V|_{X} = W \oplus V_1 \oplus L_X(2)$ and 
$${\rm soc}(W) = L_X(i) \oplus L_X(i+2),\;\; W/{\rm soc}(W) = L_X(14-i) \oplus L_X(12-i)$$
with $i \in \{0,2,4, \ldots, 12\}$ and $V_1 \in \{ L_X(16), U \}$. By self-duality, $i=6$ is the only option. But this implies that $x_2$ has trace $0$ on $V$, which contradicts Proposition \ref{p:trace}.

It follows that each indecomposable summand of $V|_{X}$ is either simple or projective. By arguing as above (the case $p=19$), using the fact that $s$ has eigenvalues $\{\xi^{6},\xi^{14},\xi^{10},\xi^2\}$ on $C_V(x)$, we deduce that 
$$V|_{X} = W(i) \oplus L_X(14) \oplus L_X(2)$$ 
with $i \in \{6,10\}$. If $i=6$ then one can check that an element $x_3 \in X$ of order $3$ has trace $1$ on $V$, so Proposition \ref{p:trace} implies that $i=10$. Therefore, the action of $X$ is compatible with an $A_1$-type subgroup of $G$ (see Table \ref{t:dec}) and it remains to establish the desired containment. 

As before, let $\{w_2, w_0, w_{-2}\}$ be a standard basis of the $L_X(2)$ summand $W$ in the decomposition of $V|_{X}$. In the usual manner we deduce that 
\begin{eqnarray*}
w_2 \!\!\! & = & \!\!\! a_1(e_1 + e_2+e_3+e_4) \\
w_0 \!\!\! & = & \!\!\! a_2(h_1 + 5h_2+6h_3+10h_4) 
\end{eqnarray*}
for some non-zero scalars $a_1,a_2 \in K$. We may assume $a_1=1$. Now $w_{-2}$ is contained in $\ker((x-1)^3) \cap E_{-2}$, which is $2$-dimensional, and we get 
$$w_{-2} = a_3(12f_{1}+9f_{2}+4f_{3}+f_{4}) + a_4(e_{1231} - e_{1222}).$$
Since the action of $x$ on $W$ is given by the matrix in (\ref{e:xmat}) we deduce that $a_2=6$ and $a_3=1$. Finally, the condition in (\ref{e:wm}), which is obtained  by considering the action of $x'$ on $W$, implies that $a_4=0$. It is easy to see that the relations in (\ref{e:br}) are satisfied and we complete the argument in the usual manner, via Proposition \ref{p:new1}.

\vs

\noindent \emph{Case 4. ${\rm rad}(V|_{X}) \neq 0$, $p=13$} 

\vs

Finally, let us assume that $p=13$. Here $V|_{X}$ is projective and thus each indecomposable summand is also projective. Since the eigenvalues of $s$ on $C_V(x)$ are $\{\xi^{10}, \xi^2, \xi^{10}, \xi^2\}$, we quickly deduce that $V|_{X}$ is one of the following:
$$W(2) \oplus W(10), \; W(2)^2, \; W(10)^2.$$ 
Let $y  = \hat{y}Z \in X$ be an element of order $7$, where $\hat{y} \in {\rm SL}_{2}(13)$ is ${\rm SL}_{2}(K)$-conjugate to a diagonal matrix ${\rm diag}(\omega,\omega^{-1})$ and $\omega \in K$ is a non-trivial $7$-th root of unity. For each decomposition we can compute the eigenvalues of $y$ on $V$ and then compare the results with the list of eigenvalue multiplicities of all elements in $G$ of order $7$ (as noted in Remark \ref{r:AJL}, the latter can be computed using Litterick's algorithm in \cite{AJL}). For example, if $V|_{X} = W(2)^2$ then $y \in {\rm GL}_{52}(K)$ is conjugate to the diagonal matrix 
$$[I_8, \omega I_8, \omega^2I_8,\omega^3I_6,\omega^4I_6,\omega^5I_8,\omega^6I_8],$$ 
but one checks that no element in $G$ of order $7$ acts on $V$ with these eigenvalues. In this way, we deduce that $V|_{X} = W(2)\oplus W(10)$ is the only possibility. 
 
Let $W$ be the $L_X(2)$ summand in the socle of $V|_{X}$ and let $\{w_2,w_0,w_{-2}\}$ be a standard basis. The spaces $\ker(x-1) \cap E_2$ and $\ker((x-1)^2) \cap E_0$ are $2$-dimensional and we get
\begin{eqnarray*}
w_2 \!\!\! & = & \!\!\! a_1(e_{1} + e_{2} +e_{3} +e_{4}) + a_2(e_{1231}-e_{1222}) \\
w_0 \!\!\! & = & \!\!\! a_3(h_1+9h_2+12h_3+9h_4) + a_4(e_{1221}+10e_{1122})
\end{eqnarray*}
for some $a_i \in K$. Finally, one checks that $\ker((x-1)^3) \cap E_{-2}$ is $4$-dimensional and we take 
$$w_{-2} = a_5(3f_{1}+f_{2}+10f_{3}+f_{4})+a_6(e_{1220}+3e_{0122}) +a_7(e_{1121}+8e_{0122})+a_8e_{2342}.$$
In the usual manner, by considering the action of $x$ on $W$, we get $a_3 = 2a_1$, $a_4=a_2$, $a_5 = 10a_1$ and $a_7 = 2a_2+a_6$. In addition, the condition in (\ref{e:wm}) yields  the following system of equations:
\begin{eqnarray*}
a_1^2a_2 + 8a_1^2a_6 \!\!\!\! & = & \!\!\! 0 \\
7a_1^2a_2 + 11a_1^2a_6 \!\!\!\! & = & \!\!\! 0 \\
11a_1^2a_2 + 5a_1^2a_6 \!\!\!\! & = & \!\!\! 0 \\ 
4a_1^2a_8 + 8a_1a_2^2 + 7a_1a_2a_6 \!\!\!\! & = & \!\!\! 0.
\end{eqnarray*}
If $a_1 \ne 0$ then these equations imply that $a_2=a_6=a_8=0$, so 
\begin{eqnarray*}
w_2 \!\!\! & = & \!\!\! a_1(e_{1} + e_{2} +e_{3} +e_{4}) \\
w_0 \!\!\! & = & \!\!\! -a_1(h_1+9h_2+12h_3+9h_4) \\
w_{-2} \!\!\! & = & \!\!\! 10a_1(3f_{1}+f_{2}+10f_{3}+f_{4})
\end{eqnarray*}
and by setting $a_1=1$ we can use Proposition \ref{p:new1} to show that $X$ is contained in an $A_1$-type subgroup. On the other hand, if $a_1=0$ then we can set $a_2=1$, so 
\begin{eqnarray*}
w_2 \!\!\! & = & \!\!\! e_{1231}-e_{1222} \\
w_0 \!\!\! & = & \!\!\! e_{1221}+10e_{1122} \\
w_{-2} \!\!\! & = & \!\!\! a_6(e_{1220}+3e_{0122}) +(2+a_6)(e_{1121}+8e_{0122})+a_8e_{2342}.
\end{eqnarray*}
One checks that $[w_2,w_0]=0$, so 
$$0 = x' \cdot [w_2,w_0] = [w_2+2w_0+w_{-2}, w_0+w_{-2}]$$
and we deduce that $a_6=7$, hence
$$w_{-2} = 7e_{1220}+9e_{1121}+2e_{0122}+a_8e_{2342}.$$
It is now easy to check that $W \subseteq \la e_{\a} \mid \a \in \Phi^+(G)\ra$ is a subalgebra, which gives case (ii) in the statement of the theorem.

\vs

This completes the proof of Theorem \ref{t:f4}.
 \end{proof}

\section{A reduction for $G=E_6$}\label{s:e6}

The following result, which we prove in this section, establishes Theorem \ref{t:red} for groups of type $E_6$.

\begin{thm}\label{t:e6}
Let $G$ be a simple adjoint algebraic group of type $E_6$ over an algebraically closed field of characteristic $p >0$. Let $X = {\rm PSL}_{2}(p)$ be a subgroup of $G$ containing a regular unipotent element $x$ of $G$ and set $V = {\rm Lie}(G)$. Then one of the following holds:
\begin{itemize}\addtolength{\itemsep}{0.2\baselineskip}
\item[{\rm (i)}] $X$ is contained in an $A_1$-type subgroup of $G$; 
\item[{\rm (ii)}] $p=13$, $V|_{X}$ is one of 
$$W(10) \oplus W(8) \oplus W(2), \; W(10) \oplus W(4) \oplus W(2), \; W(10)^2 \oplus W(4)$$
and $X$ stabilizes a non-zero subalgebra of $\la e_{\a} \mid \a \in \Phi^{+}(G)\ra$.
\end{itemize}
\end{thm}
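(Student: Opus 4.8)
The plan is to follow the elimination-and-extension strategy already used for $G_2$ and $F_4$ in Theorems~\ref{t:g2} and~\ref{t:f4}, examining the possible decompositions of $V|_X$ in decreasing order of $p$. Since the Coxeter number of $E_6$ is $12$ we have $p\geqs 13$, and I set up the standard notation of Section~\ref{ss:me}: the eigenvalues of $s$ on $C_V(x)$ are $\{\xi^{22},\xi^{16},\xi^{14},\xi^{10},\xi^{8},\xi^{2}\}$ (see Table~\ref{tab:di}), $x={\rm exp}({\rm ad}(e))$ with $e=\sum_{i=1}^{6}e_i$ relative to a Chevalley basis, the Jordan form of $x$ on $V$ is read off from~\cite{Lawther}, and $V|_X$ is self-dual. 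Here $\dim V=78$, so in particular $x$ has Jordan form $[J_{13}^6]$ when $p=13$.

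First I would dispose of the semisimple case. For $p\geqs 23$ this forces the decomposition in Table~\ref{t:dec}; I then take $W=L_X(2)$, compute a standard basis $\{w_2,w_0,w_{-2}\}$ in terms of $\mathcal{B}$ with {\sc Magma}, extract the coefficients from the $x$-action in~(\ref{e:xmat}) together with~(\ref{e:wm}), verify the relations~(\ref{e:br}), and conclude via Proposition~\ref{p:new1} (or Proposition~\ref{p:new2} at the small primes of this range, where the kernel intersections are larger and extra higher-root terms survive in $w_{-2}$, exactly as at $p=23$ for $F_4$). For $p\in\{13,17,19\}$ a semisimple $V|_X$ is impossible, since matching the highest weights of the simple summands against the eigenvalues of $s$ on $C_V(x)$ produces the wrong number of trivial eigenvalues. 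When ${\rm rad}(V|_X)\neq 0$ and $p\in\{17,19\}$, I would rule out reducible non-projective indecomposable summands using Lemma~\ref{l:jf}, Theorem~\ref{t:structure} and self-duality, then pin down the decomposition among the remaining projective/simple possibilities using the eigenvalues of $s$ (Lemma~\ref{l:eiggg}) and the traces of $x_2,x_3$ (Lemmas~\ref{l:2},~\ref{l:3} and Proposition~\ref{p:trace}); this recovers the generic decomposition of Table~\ref{t:dec} and the extension argument proceeds as above.

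The crux is $p=13$, where $V|_X$ is projective and hence a direct sum of projective indecomposables. Since none of the eigenvalues of $s$ on $C_V(x)$ equals $1$, there are no summands $U$ or $L_X(12)$, so $V|_X$ is a sum of three modules $W(a_j)$. Reducing the $\xi^{d_i}$ modulo $p-1=12$, I match $\{\xi^{a_j},\xi^{-a_j}\}$ against the multiset $\{\xi^{2},\xi^{2},\xi^{4},\xi^{8},\xi^{10},\xi^{10}\}$; since the pair $\{\xi^2,\xi^{10}\}$ comes only from $W(2)$ or $W(10)$ and $\{\xi^4,\xi^8\}$ only from $W(4)$ or $W(8)$, this leaves exactly six candidates. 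As one checks that ${\rm tr}(V,x_2)=-2$ and ${\rm tr}(V,x_3)=-3$ for all six (using Lemma~\ref{l:3} and Remark~\ref{r:eeig}), the traces do not separate them, so I would eliminate three of the six by comparing the eigenvalue multiplicities of an element of order $7$ on $V$ with the $E_6$ data produced by Litterick's algorithm (Remark~\ref{r:AJL}), leaving precisely the three decompositions listed in part~(ii).

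For each surviving decomposition I would take the socle summand $W=L_X(2)$ with standard basis $\{w_2,w_0,w_{-2}\}$, where now $\ker(x-1)\cap E_2$, $\ker((x-1)^2)\cap E_0$ and $\ker((x-1)^3)\cap E_{-2}$ are several-dimensional. Writing $w_2,w_0,w_{-2}$ with undetermined coefficients, the $x$-action~(\ref{e:xmat}) and the bracket condition~(\ref{e:wm}) give a system whose solutions bifurcate exactly as in the $F_4$ analysis: one branch has $w_2$ a non-zero multiple of $\sum_i e_i$, so $W$ is an $\mathfrak{sl}_2$-subalgebra and Proposition~\ref{p:new1} (or~\ref{p:new2}, after rescaling to an $\mathfrak{sl}_2$-triple over $\Z$ as in~(\ref{e:br0})) places $X$ in an $A_1$-type subgroup, giving~(i); the other branch forces $w_2,w_0,w_{-2}\in\la e_{\a}\mid \a\in\Phi^{+}(G)\ra$, exhibiting the required $X$-stable subalgebra and giving~(ii). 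The main obstacle I anticipate is precisely this $p=13$ analysis: the order-$7$ eigenvalue comparison needed to cut the list from six candidates to three, and the careful solution of the resulting coefficient systems, which must be organised so that the two branches --- $\mathfrak{sl}_2$-subalgebra versus subalgebra of $\la e_{\a}\ra$ --- are cleanly separated for each of the three decompositions. These three cases are then passed to the parabolic analysis of Section~\ref{s:final}.
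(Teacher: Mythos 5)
Your overall strategy --- elimination via Jordan forms, the eigenvalues of $s$ on $C_V(x)$, traces of $x_2,x_3$ and an order-$7$ element, followed by an extension step --- is the paper's, and your treatment of the semisimple case, of $p\in\{17,19\}$, and of the reduction at $p=13$ to the three decompositions in part (ii) matches the actual proof. (You do omit the case ${\rm rad}(V|_{X})\neq 0$ with $p\geqs 23$; the paper dispatches it by showing that any reducible indecomposable summand forces a configuration with a $U$-summand that is incompatible with the eigenvalues of $s$ on $C_V(x)$. This is minor.)

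The genuine gap is in your extension step at $p=13$. You propose to run the argument on ``the socle summand $W=L_X(2)$'' for each of the three surviving decompositions, but $W(10)^2\oplus W(4)$ has socle $L_X(10)^2\oplus L_X(4)$ and contains no $L_X(2)$ summand, so there is nothing to apply your argument to in that case. The paper instead works with the $5$-dimensional socle summand $L_X(4)$ for both $W(10)\oplus W(4)\oplus W(2)$ and $W(10)^2\oplus W(4)$, using a basis $\{w_4,w_2,w_0,w_{-2},w_{-4}\}$ and explicit $5\times 5$ matrices for $x$ and $x'$; there $\ker(x-1)\cap E_4$ is $1$-dimensional and already lies in $\la e_{\a}\mid \a\in\Phi^{+}(G)\ra$, and the coefficient computation shows the whole of $W$ does, with no bifurcation. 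Relatedly, your expectation that all three cases ``bifurcate exactly as in the $F_4$ analysis'' into branches (i) and (ii) is wrong: by Proposition \ref{p:11} and Table \ref{t:dec}, only $W(10)\oplus W(8)\oplus W(2)$ is compatible with $X$ lying in an $A_1$-type subgroup, so for the other two decompositions the branch leading to conclusion (i) cannot occur and the argument must be organised to produce a subalgebra of $\la e_{\a}\mid \a\in\Phi^{+}(G)\ra$ unconditionally. The two-branch analysis on the $L_X(2)$ summand is carried out in the paper only for $W(10)\oplus W(8)\oplus W(2)$, where both outcomes are genuinely possible.
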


\begin{proof}
Here $p \geqs 13$, $V|_{X}$ is self-dual and 
\begin{equation}\label{e:eige6}
\{\xi^{22},\xi^{16},\xi^{14},\xi^{10},\xi^{8},\xi^2\}
\end{equation} 
are the eigenvalues of $s$ on $C_V(x)$, where  $\mathbb{F}_{p}^{\times} = \la \xi \ra$ (see Section \ref{ss:me}). By inspecting \cite[Table 6]{Lawther}, we see that 
\begin{equation}\label{e:4}
\left\{\begin{array}{ll}
\mbox{$[J_{23},J_{17},J_{15},J_{11},J_9,J_3]$} & p \geqs 23 \\
\mbox{$[J_{19}^2,J_{17},J_{11},J_9,J_3]$} & p=19 \\
\mbox{$[J_{17}^3,J_{15},J_9,J_{3}]$} & p=17 \\
\mbox{$[J_{13}^6]$} & p=13 
\end{array}\right.
\end{equation}
is the Jordan form of $x$ on $V$. We adopt the notation introduced in Section \ref{s:prel}.

\vs

\noindent \emph{Case 1. $V|_{X}$ is semisimple} 

\vs

If $p \in \{13,17,19\}$ then 
$$V|_{X} = \left\{\begin{array}{ll}
L_X(18)^2 \oplus L_X(16) \oplus L_X(10) \oplus L_X(8) \oplus L_X(2) & p = 19 \\
L_X(16)^3 \oplus L_X(14) \oplus L_X(8) \oplus L_X(2) & p =17  \\
L_X(12)^6 & p = 13
\end{array}\right.$$
but not one of these decompositions is compatible with the eigenvalues of $s$ on $C_V(x)$ (see (\ref{e:eige6})) so we may assume $p \geqs 23$ and
$$V_X = L_X(22) \oplus L_X(16) \oplus L_X(14) \oplus L_X(10) \oplus L_X(8) \oplus L_X(2).$$
Let $W$ be the $L_X(2)$ summand and let $\{w_2,w_0,w_{-2}\}$ be a standard basis for $W$. If $p \geqs 29$ then one checks that each of the spaces in (\ref{e:xe}) are $1$-dimensional and the result quickly follows via 
Proposition \ref{p:new1}. For example, if $p=29$ then  
\begin{eqnarray*}
w_2 \!\!\! & = & \!\!\! a_1(e_1 + e_2+e_3+e_4+e_5+e_6) \\
w_0 \!\!\! & = & \!\!\! a_2(h_1+5h_2+20h_3+28h_4+20h_5+h_6) \\
w_{-2} \!\!\! & = & \!\!\! a_3(f_1+5f_2+20f_3+28f_4+20f_5+f_6)
\end{eqnarray*}
and by setting $a_1=1$ and considering the action of $x$ on $W$ (see (\ref{e:xmat})), we deduce that $a_2=21$ and $a_3=13$. One now checks that $(w_{2},-2w_0,-w_{-2})$ is an $\mathfrak{sl}_2$-triple over $\Z$ (see the proof of \cite[Proposition 2.4]{T}) and by applying Proposition \ref{p:new1} we deduce that $X$ is contained in an $A_1$-type subgroup of $G$. 

Now assume $p=23$. Here $\ker((x-1)^2) \cap E_0$ and $\ker((x-1)^3) \cap E_{-2}$ are $2$-dimensional and we get
\begin{eqnarray*}
w_2 \!\!\! & = & \!\!\! a_1(e_1 + e_2+e_3+e_4+e_5+e_6) \\
w_0 \!\!\! & = & \!\!\! a_2(h_1 + 10h_2+22h_3+17h_4+22h_5+h_6)+a_3e_{122321} \\
w_{-2} \!\!\! & = & \!\!\! a_4(f_1+10f_2+22f_3+17f_4+22f_5+f_6) +a_5e_{112321} 
\end{eqnarray*}
where $a_1a_2a_4 \ne 0$. Set $a_1=1$. From the action of $x$ on $W$ we deduce that $a_2=15$, $a_4 = 7$ and $a_5=2a_3$, so 
\begin{eqnarray*}
w_2 \!\!\! & = & \!\!\! e_1 + e_2+e_3+e_4+e_5+e_6 \\
w_0 \!\!\! & = & \!\!\! 15(h_1 + 10h_2+22h_3+17h_4+22h_5+h_6)+\gamma e_{122321} \\
w_{-2} \!\!\! & = & \!\!\! 7(f_1+10f_2+22f_3+17f_4+22f_5+f_6) +2\gamma e_{112321} 
\end{eqnarray*}
for some $\gamma \in K$. If we take
$$e = w_2, \; h = -2(15(h_1 + 10h_2+22h_3+17h_4+22h_5+h_6))$$
and
\begin{eqnarray*}
f \!\!\! & = & \!\!\! -7(f_1+10f_2+22f_3+17f_4+22f_5+f_6) \\
& = & \!\!\! 16f_1+22f_2+7f_3+19f_4+7f_5+16f_6
\end{eqnarray*}
then $(e,h,f)$ is an $\mathfrak{sl}_2$-triple over $\Z$ and using Proposition \ref{p:new2} we conclude that $X$ is contained in an $A_1$-type subgroup of $G$.

\vs

\noindent \emph{Case 2. ${\rm rad}(V|_{X}) \neq 0$, $p \geqs 19$} 

\vs

If $p \geqs 23$ then we can essentially repeat the argument in the proof of Theorem \ref{t:f4} (see the first paragraph in Case 2). Indeed, it is easy to reduce to the case where $p=23$ and 
$$V|_{X} = U \oplus L_X(16) \oplus L_X(14) \oplus L_X(10) \oplus L_X(8) \oplus L_X(2),$$
but this is not compatible with (\ref{e:eige6}). 

Now assume $p=19$.  Suppose $V|_{X}$ has a reducible non-projective indecomposable summand $W$. By applying Lemma \ref{l:jf} and Theorem \ref{t:structure}, we deduce that the Jordan form of $x$ on $W$ is one of the following:
$$[J_{19}^{2},J_{11}], \; [J_{19}^{2},J_{9}],  \; [J_{19}^{2},J_{3}], \;[J_{19},J_{17}].$$ 
In particular, $V|_{X}$ has a unique such summand. The structure of $W$ is described in Theorem \ref{t:structure} and it is easy to see that the existence of such a summand contradicts the self-duality of $V|_{X}$. For instance, suppose $x$ has Jordan form $[J_{19}^2,J_9]$ on $W$. Then up to duality we have 
$${\rm soc}(W) = L_X(6) \oplus L_X(8) \oplus L_X(10),\;\; W/{\rm soc}(W) = L_X(10) \oplus L_X(8)$$
and thus $V|_{X} = W \oplus L_X(16) \oplus L_X(10) \oplus L_X(2)$ is not self-dual. The other cases are very similar.

Therefore, we may assume that each indecomposable summand of $V|_{X}$ is either simple or projective. By considering the eigenvalues of $s$ in (\ref{e:eige6}), we deduce that 
$$V|_{X} = W(i) \oplus L_X(16) \oplus L_X(10) \oplus L_X(8) \oplus L_X(2)$$
with $i \in \{4,14\}$. If $i=4$ then we find that $x_2$ has trace $2$ on $V$, which contradicts Proposition \ref{p:trace}, hence $i=14$ is the only possibility. In the usual manner, we now  construct a basis 
\begin{eqnarray*}
w_2 \!\!\! & = & \!\!\! a_1(e_1+e_2+e_3+e_4+e_5+e_6) \\
w_0 \!\!\! & = & \!\!\! a_2(h_1+18h_2+9h_3+5h_4+9h_5+h_6) \\
w_{-2} \!\!\! & = & \!\!\! a_3(f_1+18f_2+9f_3+5f_4+9f_5+f_6) + a_4(e_{112211} - e_{111221})
\end{eqnarray*}
(with $a_1a_2a_3 \ne 0$) of the summand $W = L_X(2)$ of $V|_{X}$. If we set $a_1=1$ and consider the action of $x$ on $W$ (see (\ref{e:xmat})) we deduce that $a_2=11$ and $a_3 = 3$, and one checks that the condition in (\ref{e:wm}) gives $a_4=0$. The result now follows in the usual manner via Proposition \ref{p:new1}.

\vs

\noindent \emph{Case 4. ${\rm rad}(V|_{X}) \neq 0$, $p=17$} 

\vs

First assume that $V|_{X}$ has a reducible indecomposable non-projective summand $W$. In the usual way, by combining Lemma \ref{l:jf} and Theorem \ref{t:structure}, we deduce that the Jordan form of $x$ on $W$ is one of the following:
$$[J_{17}^{3},J_3], \; [J_{17}^{2},J_{9}], \; [J_{17}^{2},J_3], \; [J_{17},J_{15}].$$

Suppose that $x$ has Jordan form $[J_{17}^{3},J_3]$ on $W$, so $V|_{X} = W \oplus L_X(14) \oplus L_X(8)$. By applying Theorem \ref{t:structure}, using the self-duality of $V|_{X}$, we deduce that 
$${\rm soc}(W) = W/{\rm soc}(W) = L_X(10) \oplus L_X(8) \oplus L_X(6)$$
is the only possibility, but this is incompatible with the eigenvalues of $s$ on $C_V(x)$. We can rule out $[J_{17}^{2},J_{9}]$ and $[J_{17}^{2},J_3]$ by the self-duality of $V|_{X}$, so let us assume $x$ has Jordan form $[J_{17},J_{15}]$ on $W$. By self-duality it follows that
$${\rm soc}(W) = W/{\rm soc}(W) = L_X(8) \oplus L_X(6)$$
and thus $V|_{X}$ is one of the following:
$$\left\{\begin{array}{l}
W \oplus M_1 \oplus M_2 \oplus L_X(8) \oplus L_X(2) \\
W \oplus W(i) \oplus L_X(8) \oplus L_X(2) 
\end{array}\right.$$
where $M_j \in \{L_X(16),U\}$ and $i \in \{2,4, \ldots, 14\}$. However, it is clear that none of these decompositions are compatible with (\ref{e:eige6}). 

For the remainder, we may assume that each indecomposable summand of $V|_{X}$ is either simple or projective. By considering the eigenvalues of $s$, we deduce that 
$$V|_{X} = W(i) \oplus M_1 \oplus L_X(14) \oplus L_X(8) \oplus L_X(2)$$
with $i \in \{6,10\}$ and $M_1 \in \{L_X(16),U\}$. By computing the trace of $x_3$ and appealing to Proposition \ref{p:trace} (and also Remark \ref{r:eeig}), it follows that $i=10$ and $M_1=L_X(16)$ is the only possibility. In particular, we have now reduced to the case where the decomposition of $V|_{X}$ is compatible with containment in an $A_1$-type subgroup of $G$ (see Table \ref{t:dec}). 

As before, let $W$ be the $L_X(2)$ summand of $V|_{X}$ and let $\{w_2,w_0,w_{-2}\}$ be a standard basis of $W$. The reader can check that
\begin{eqnarray*}
w_2  \!\!\! & = & \!\!\! a_1(e_1+e_2+e_3+e_4+e_5+e_6) \\
w_0  \!\!\! & = & \!\!\! a_2(h_1+12h_2+4h_3+9h_4+4h_5+h_6) + a_3(e_{112211} - e_{111221}) \\
w_{-2} \!\!\! & =  & \!\!\! a_4(f_1+12f_2+4f_3+9f_4+4f_5+f_6) + a_5(e_{112210} + e_{011221}) \\
&  & \!\!\! + a_6(e_{111211} + 15e_{011221})
\end{eqnarray*}
with $a_1a_2a_4 \ne 0$. Set $a_1=1$. By considering the action of $x$ on $W$ we deduce that $a_2 = 9$, $a_4=1$ and $a_6 = 2a_3+a_5$. The condition in (\ref{e:wm}) yields $a_5 = 15a_3$, so $a_6=0$ and thus
\begin{eqnarray*}
w_2  \!\!\! & = & \!\!\! e_1+e_2+e_3+e_4+e_5+e_6 \\
w_0  \!\!\! & = & \!\!\! 9(h_1+12h_2+4h_3+9h_4+4h_5+h_6) + \gamma(e_{112211} - e_{111221}) \\
w_{-2} \!\!\! & = & \!\!\! f_1+12f_2+4f_3+9f_4+4f_5+f_6 + 15\gamma(e_{112210} + e_{011221})
\end{eqnarray*}
for some $\gamma \in K$. In addition, the relations in (\ref{e:br}) are satisfied and $W$ is an $\mathfrak{sl}_2$-subalgebra of $V$. Set
$$e = w_2, \; h = -2(9(h_1+12h_2+4h_3+9h_4+4h_5+h_6))$$
and 
\begin{eqnarray*}
f \!\!\! & = & \!\!\! -(f_1+12f_2+4f_3+9f_4+4f_5+f_6) \\
& = & \!\!\! 16f_1+5f_2+13f_3+8f_4+13f_5+16f_6.
\end{eqnarray*}
Then $(e,h,f)$ is an $\mathfrak{sl}_2$-triple over $\Z$ and by applying Proposition \ref{p:new2} (with $y = e_{112211} - e_{111221}$ and $z=-(e_{112210} + e_{011221})$) we conclude that $X$ is contained in an $A_1$-type subgroup of $G$.

\vs

\noindent \emph{Case 5. ${\rm rad}(V|_{X}) \neq 0$, $p=13$} 

\vs

Here (\ref{e:4}) implies that $V|_{X}$ is projective, so each indecomposable summand is also  projective. In view of (\ref{e:eige6}), we must have $V|_{X} = W(i) \oplus W(j) \oplus W(k)$ with $i,j \in \{2,10\}$ and $k \in \{4,8\}$. In each case, the traces of $x_2$ and $x_3$ are $-2$ and $-3$, respectively, so we need to work harder to eliminate some of these decompositions. Let $y = \hat{y}Z \in X$ be an element of order $7$, where $\hat{y} \in {\rm SL}_{2}(13)$ is ${\rm SL}_{2}(K)$-conjugate to a diagonal matrix ${\rm diag}(\omega,\omega^{-1})$ and $\omega \in K$ is a non-trivial $7$-th root of unity. We can compute the eigenvalues of $y$ on $V$ and then compare with the eigenvalue multiplicities of all elements in $G$ of order $7$, which we obtain using the algorithm in \cite{AJL}. In this way, we deduce that $V|_{X}$ is one of the following:
$$W(10)\oplus W(8) \oplus W(2),\; W(10)\oplus W(4) \oplus W(2),\; W(10)^2 \oplus W(4).$$

\vs

\noindent \emph{Case 5(a). $p=13$, $V|_{X} = W(10)\oplus W(8) \oplus W(2)$} 

\vs

Here $V|_{X}$ is compatible with the containment of $X$ in an $A_1$-type subgroup of $G$ (see Table \ref{t:dec}). Let $W$ be the $L_X(2)$ summand in the socle of $V|_{X}$ and let $\{w_2,w_0,w_{-2}\}$ be a standard basis. In the usual manner, we deduce that
\begin{eqnarray*}
w_2  \!\!\! & = & \!\!\! a_1(e_1+e_2+e_3+e_4+e_5+e_6) + a_2(e_{112210}+e_{111211}-e_{011221}) \\
w_0  \!\!\! & = & \!\!\! a_3(h_1+3h_2+10h_3+h_4+10h_5+h_6) + a_4(e_{111210}+3e_{111111}-e_{011211}) \\
w_{-2}  \!\!\! & = & \!\!\! a_5(f_1+3f_2+10f_3+f_4+10f_5+f_6) + a_6(e_{111110}+7e_{011210}-e_{011111}) \\
& & \!\!\! + a_7(e_{101111}+9e_{011210}) + a_8(e_{122321})
\end{eqnarray*}
for some scalars $a_i \in K$. By considering the action of $x$ on $W$, together with the condition in (\ref{e:wm}), we see that
$$a_3=5a_1, \;\; a_4 = a_2, \;\; a_5 = 10a_1,\;\; a_7 = 6a_2+2a_6$$
and either $a_1 = 0$ or $a_2=a_6=a_8=0$. In the latter situation, we set $a_1=1$ and then check that the relations in (\ref{e:br}) are satisfied -- this allows us to apply Proposition \ref{p:new1} to conclude that $X$ is contained in an $A_1$-type subgroup of $G$. Now assume $a_1=0$ and set $a_2=1$. Here one checks that $[w_{-2},w_2]=0$, so $[w_{-2},w_2+2w_0+w_{-2}]=0$ since $x'$ preserves the Lie bracket on $V$. This yields $a_6=9$, so
\begin{eqnarray}
w_2  \!\!\! & = & \!\!\! e_{112210}+e_{111211}-e_{011221}  \nonumber \\
w_0 \!\!\! & = & \!\!\! e_{111210}+3e_{111111}-e_{011211} \label{e:w0} \\
w_{-2} \!\!\! & = & \!\!\! 9e_{111110}+11e_{101111}+6e_{011210}+4e_{011111}+a_8(e_{122321}). \nonumber
\end{eqnarray}
We conclude that $W$ is an $X$-invariant subalgebra of $\la e_{\a} \mid \a \in \Phi^+(G) \ra$, as in part (ii) of the theorem.

\vs

\noindent \emph{Case 5(b). $p=13$, $V|_{X} = W(10)\oplus W(4) \oplus W(2)$ or $W(10)^2 \oplus W(4)$} 

\vs

Let $W$ be the $L_X(4)$ summand in the socle of $V|_{X}$ and let $\{w_4,w_2,w_0,w_{-2},w_{-4}\}$ be a basis of $W$ with $w_i \in E_i$. We may assume that the actions of $x$ and $x'$ on $W$ are given by the matrices
\begin{equation}\label{e:lx4}
\left(\begin{array}{ccccc}
1 & 1 & 1 & 1 & 1 \\
0 & 1 & 2 & 3 & 4 \\
0 & 0 & 1 & 3 & 6 \\
0 & 0 & 0 & 1 & 4 \\
0 & 0 & 0 & 0 & 1
\end{array}\right),\;\;\; 
\left(\begin{array}{ccccc}
1 & 0 & 0 & 0 & 0 \\
4 & 1 & 0 & 0 & 0 \\
6 & 3 & 1 & 0 & 0 \\
4 & 3 & 2 & 1 & 0 \\
1 & 1 & 1 & 1 & 1
\end{array}\right)
\end{equation}
respectively (in terms of this basis). One checks that $\ker(x-1) \cap E_4$ is $1$-dimensional, whereas the spaces
$$\ker((x-1)^2) \cap E_2,\; \ker((x-1)^3) \cap E_0$$
are $3$-dimensional, and  
$$\ker((x-1)^4) \cap E_{-2},\; \ker((x-1)^5) \cap E_{-4}$$
have dimension $5$ and $6$, respectively, and we get 
\begin{eqnarray*}
w_4 \!\!\! & = & \!\!\! a_1(e_{112211}-e_{111221}) \\
w_2 \!\!\! & = & \!\!\! a_2(e_{112210}+e_{011221}) + a_3(e_{111211}+11e_{011221}) \\
& & \!\!\! + a_4(e_1+e_2+e_3+e_4+e_5+e_6) \\
w_0 \!\!\! & = & \!\!\! a_5(e_{111210}+e_{011211}) + a_6(e_{111111}+8e_{011211}) \\ 
& & \!\!\! +a_7(h_1+3h_2+10h_3+h_4+10h_5+h_6) \\
w_{-2} \!\!\! & = & \!\!\! a_8(e_{111110}+e_{011111}) + a_9(e_{101111}+10e_{011111}) + a_{10}(e_{011210}+9e_{011111}) \\
& & \!\!\! + a_{11}(f_1+3f_2+10f_3+f_4+10f_5+f_6) + a_{12}(e_{122321}) \\
w_{-4} \!\!\! & = & \!\!\! a_{13}(e_{111100}+3e_{001111}) + a_{14}(e_{101110}+e_{001111}) + a_{15}(e_{011110}+6e_{001111}) \\
& & \!\!\! + a_{16}(e_{010111}+3e_{001111})+ a_{17}(12f_{101000}+10f_{010100}+2f_{001100} \\
& & \!\!\! +11f_{000110} +f_{000011}) + a_{18}(e_{112321})
\end{eqnarray*}
for some $a_i \in K$.

Set $a_1=1$ and consider the relations among the $a_i$ obtained from the action of $x$ on this basis. It is also helpful to note that $x'$ is a regular unipotent element, so $C_V(x')$ is abelian and we see that $[w_{-4},[w_{-4},w_{-2}]] = 0$ since $[w_{-4},w_{-2}]\in C_V(x')$. In this way, we deduce that 
\begin{eqnarray*}
w_4 \!\!\! & = & \!\!\! e_{112211}-e_{111221} \\
w_2 \!\!\! & = & \!\!\! a_2(e_{112210}+e_{011221}) + (1+a_2)(e_{111211}+11e_{011221}) \\
w_0  \!\!\! & = & \!\!\! 2a_2(e_{111210}+e_{011211}) + (6+6a_2)(e_{111111}+8e_{011211}) \\
w_{-2}  \!\!\! & = & \!\!\! a_8(e_{111110}+e_{011111}) + (4+5a_2+2a_8)(e_{101111}+10e_{011111}) \\
& & \!\!\! + (6a_2+12a_8)(e_{011210}+9e_{011111})+ a_{12}(e_{122321}) \\
w_{-4} \!\!\! & = & \!\!\! a_{13}(e_{111100}+3e_{001111}) + (2a_2+8a_8+a_{13})(e_{101110}+e_{001111}) \\
& & \!\!\! + (11a_2+9a_8)(e_{011110}+6e_{001111}) \\
& & \!\!\! + (1+2a_2+2a_8+12a_{13})(e_{010111}+3e_{001111}) + 4a_{12}(e_{112321}).
\end{eqnarray*}

Next one checks that $[w_2, w_{-2}]=0$, so $x'\cdot [w_2,w_{-2}]=0$ and thus 
$$[w_2+3w_0+3w_{-2}+w_{-4}, w_{-2}+w_{-4}] = 0$$
since $x'$ preserves the Lie bracket. This yields $a_{13} = 12a_2+2a_{2}^2+12a_{2}a_{8}$. Similarly, $[w_4, w_{2}]=0$ and thus 
$$[w_4+4w_2+6w_0+4w_{-2}+w_{-4}, w_{2}+3w_{0}+3w_{-2}+w_{-4}] = 0.$$
This relation implies that $a_2^2+12a_2a_8+9a_2+12a_8 = 0$ and it is now straightforward to check that $W \subseteq \la e_{\a} \mid \a \in \Phi^+(G)\ra$ is a subalgebra. 

\vs

This completes the proof of Theorem \ref{t:e6}.
 \end{proof}

\section{A reduction for $G=E_7$}\label{s:e7}

In this section we establish the following result, which proves Theorem \ref{t:red} for groups of type $E_7$.

\begin{thm}\label{t:e7}
Let $G$ be a simple adjoint algebraic group of type $E_7$ over an algebraically closed field of characteristic $p >0$. Let $X = {\rm PSL}_{2}(p)$ be a subgroup of $G$ containing a regular unipotent element $x$ of $G$ and set $V = {\rm Lie}(G)$. Then one of the following holds:
\begin{itemize}\addtolength{\itemsep}{0.2\baselineskip}
\item[{\rm (i)}] $X$ is contained in an $A_1$-type subgroup of $G$; 
\item[{\rm (ii)}] $p=19$, $V|_{X}$ is one of 
$$W(8) \oplus W(4) \oplus W(2) \oplus U, \;\; W(16) \oplus W(10) \oplus W(4) \oplus U,$$
$$W(16) \oplus W(14) \oplus W(8) \oplus U$$
and $X$ stabilizes a non-zero subalgebra of $\la e_{\a} \mid \a \in \Phi^{+}(G)\ra$.
\end{itemize}
\end{thm}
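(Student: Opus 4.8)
The plan is to follow the same three-part strategy (elimination, then extension) used in Theorems \ref{t:g2}, \ref{t:f4} and \ref{t:e6}. Since the Coxeter number of $E_7$ is $18$ we have $p \geqs 19$, and I set up the standard notation from Section \ref{ss:me}: in particular $V|_X$ is self-dual, the eigenvalues of $s$ on $C_V(x)$ are $\{\xi^{34}, \xi^{26}, \xi^{22}, \xi^{18}, \xi^{14}, \xi^{10}, \xi^2\}$ (read off from Table \ref{tab:di}), and the Jordan form of $x$ on $V$ for each $p$ is obtained from \cite[Table 7]{Lawther}. We may assume $x = \exp(\mathrm{ad}(e))$ with $e = \sum_{i=1}^7 e_i$ for a fixed Chevalley basis as in (\ref{e:chev}). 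The argument splits according to whether $V|_X$ is semisimple, and then by the value of $p$ once $\mathrm{rad}(V|_X) \neq 0$.

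First I would treat the semisimple case. For $p \geqs 37$ the Jordan form is $[J_{35},J_{27},J_{23},J_{19},J_{15},J_{11},J_3]$, which together with semisimplicity forces the generic decomposition $V|_X = L_X(34) \oplus L_X(26) \oplus L_X(22) \oplus L_X(18) \oplus L_X(14) \oplus L_X(10) \oplus L_X(2)$ of Table \ref{t:dec}. Taking $W = L_X(2)$ with a standard basis $\{w_2, w_0, w_{-2}\}$, I compute these vectors in terms of the Chevalley basis using {\sc Magma} and the relations from the action of $x$ on $W$, and show $W$ is an $\mathfrak{sl}_2$-subalgebra. For $p \geqs 41$ the relevant kernel--eigenspace intersections are $1$-dimensional and Proposition \ref{p:new1} applies directly; at the boundary prime $p = 37$ the highest root vector $e_{\a_0}$ lies in $E_{-2}$ (since $2\,\mathrm{ht}(\a_0) = 34 \equiv -2 \pmod{p-1}$), so $\ker((x-1)^3) \cap E_{-2}$ is $2$-dimensional and I would instead invoke Proposition \ref{p:new2}, exactly as in Case 1 of Theorem \ref{t:e6}. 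For each $p \in \{19, 23, 29, 31\}$, the semisimple decomposition forced by the Jordan form is a sum of $L_X(i)$'s whose $s$-eigenvalues on $C_V(x)$ do not match $\{\xi^{34}, \ldots, \xi^2\}$, a contradiction.

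Next I would handle $\mathrm{rad}(V|_X) \neq 0$ for $p \in \{23, 29, 31\}$. Using Lemma \ref{l:jf} with Theorem \ref{t:structure} and the self-duality of $V|_X$, I rule out any reducible non-projective indecomposable summand: such a summand would be unique, and its socle structure would break self-duality or be incompatible with the eigenvalues of $s$. This reduces us to the case where every summand is simple or projective; the eigenvalues of $s$ on $C_V(x)$ (via Lemma \ref{l:eiggg}) together with the traces of $x_2$ and $x_3$ (Lemmas \ref{l:2}, \ref{l:3} and Proposition \ref{p:trace}) then pin down $V|_X$ to the decomposition in Table \ref{t:dec}. Finally the standard-basis computation identifies the $L_X(2)$ summand as an $\mathfrak{sl}_2$-subalgebra and Proposition \ref{p:new1} (or \ref{p:new2}) yields containment in an $A_1$-type subgroup.

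The crux is $p = 19$. Here $\dim G = 133 = 7p$ and $x$ has Jordan form $[J_{19}^7]$, so $V|_X$ is projective and every indecomposable summand is projective, hence of the form $U$, $L_X(18)$ (dimension $p$) or $W(i)$ (dimension $2p$). Since $\xi$ has order $18$, the eigenvalue multiset reduces to $\{\xi^{16}, \xi^8, \xi^4, 1, \xi^{14}, \xi^{10}, \xi^2\}$, which contains a single $\xi^0$; as $U$ and $L_X(18)$ each contribute exactly one $\xi^0$ to $C_V(x)$ while each $W(i)$ contributes $\{\xi^i, \xi^{-i}\}$, this forces $V|_X = W(a) \oplus W(b) \oplus W(c) \oplus M$ with $M \in \{U, L_X(18)\}$ and $\{a,b,c\}$ realising the three eigenvalue pairs, subject to a $W(i) \leftrightarrow W(p-1-i)$ ambiguity in each pair. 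Computing the trace of an involution (which must lie in $\{-7, 5, 25\}$ by Proposition \ref{p:trace}) via Lemmas \ref{l:2} and \ref{l:3} then eliminates all but four possibilities: the generic $W(2) \oplus W(10) \oplus W(14) \oplus L_X(18)$, for which the standard-basis argument with Proposition \ref{p:new1}/\ref{p:new2} gives case (i), and the three decompositions $W(8) \oplus W(4) \oplus W(2) \oplus U$, $W(16) \oplus W(10) \oplus W(4) \oplus U$ and $W(16) \oplus W(14) \oplus W(8) \oplus U$. For these last three I would run the standard-basis computation on the socle summand and show that the resulting system of equations admits no solution making $W$ an $\mathfrak{sl}_2$-triple, but instead forces $W$ into $\la e_{\a} \mid \a \in \Phi^+(G) \ra$, yielding the nonzero $X$-stable subalgebra of part (ii). I expect this final step --- the explicit coefficient analysis in the three exceptional $U$-cases and the verification that the stabilized subalgebra lies in $\la e_{\a} \mid \a \in \Phi^+(G)\ra$ --- to be the main obstacle, just as in Case 5 of Theorem \ref{t:e6}.
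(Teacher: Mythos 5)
Your overall architecture matches the paper's proof (elimination via Jordan forms, $s$-eigenvalues on $C_V(x)$ and traces, then the standard-basis $\mathfrak{sl}_2$ computation), but there is a genuine gap in the crucial $p=19$ elimination. After the projectivity and eigenvalue constraints one has $V|_X = W(i)\oplus W(j)\oplus W(k)\oplus M_1$ with $i\in\{2,16\}$, $j\in\{4,14\}$, $k\in\{8,10\}$, $M_1\in\{L_X(18),U\}$, and the involution trace (which must lie in $\{-7,5,25\}$) leaves \emph{five} survivors, not four: besides the generic $(2,14,10,L_X(18))$ and the three $U$-cases of part (ii), the decomposition $W(16)\oplus W(4)\oplus W(8)\oplus L_X(18)$ also has involution trace $2+2+2-1=5$, and one checks that $x_3$ has trace $2$ on all five, so elements of order $2$ and $3$ cannot remove it. The paper eliminates this fifth case by computing the eigenvalue multiplicities of an element of order $5$ (it would act on $V$ as $[I_{25},\omega I_{27},\omega^2 I_{27},\omega^3 I_{27},\omega^4 I_{27}]$, which no order-$5$ element of $E_7$ realises; see Remark \ref{r:AJL} and \cite[Table 6]{CG}). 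Without this extra tool your elimination step does not close.

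Two further, more minor divergences. First, you never treat ${\rm rad}(V|_X)\neq 0$ with $p\geqs 37$; this is easy (Lemma \ref{l:jf} and Corollary \ref{c:bound} force a Jordan block of size at least $36$, contradicting the Jordan form) but it must be said. Second, for the three $U$-cases your plan of running the standard-basis coefficient analysis is unnecessarily heavy and awkward to set up uniformly (e.g.\ $W(16)\oplus W(14)\oplus W(8)\oplus U$ has no $L_X(2)$ in its socle); the paper instead observes that $X$ stabilizes ${\rm soc}(U)=L_X(0)$, which is spanned by the unique (up to scalar) vector of $C_V(x)\cap E_0$, namely $e_{1122111}-e_{1112211}+e_{0112221}\in\la e_{\a}\mid\a\in\Phi^+(G)\ra$, giving the required non-zero stable subalgebra immediately. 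Finally, in the generic $p=19$ case the vector $w_{-2}$ acquires a term $\gamma^2 e_{2234321}$ of $T$-weight $2p-4$, so Proposition \ref{p:new2} does not apply verbatim and a modified version of its proof is needed; your sketch glosses over this.
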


\begin{proof}
Here we have $p \geqs 19$ and  
\begin{equation}\label{e:eige7}
\{\xi^{34},\xi^{26},\xi^{22},\xi^{18},\xi^{14},\xi^{10},\xi^2\}
\end{equation} 
is the collection of eigenvalues of $s$ on $C_V(x)$, where $\mathbb{F}_{p}^{\times} = \la \xi \ra$. By \cite[Table 8]{Lawther}, the Jordan form of $x$ on $V$ is as follows:
\begin{equation}\label{e:5}
\left\{\begin{array}{ll}
\mbox{$[J_{35},J_{27},J_{23},J_{19},J_{15},J_{11},J_3]$} & p \geqs 37 \\
\mbox{$[J_{31}^2,J_{23},J_{19},J_{15},J_{11},J_3]$} & p = 31 \\
\mbox{$[J_{29}^2,J_{27},J_{19},J_{15},J_{11},J_{3}]$} & p=29 \\
\mbox{$[J_{23}^5,J_{15},J_{3}]$} & p=23 \\
\mbox{$[J_{19}^7]$} & p=19 
\end{array}\right.
\end{equation}
Note that $V|_{X}$ is self-dual.

\vs

\noindent \emph{Case 1. $V|_{X}$ is semisimple} 

\vs

If $p<37$ then the eigenvalues of $s$ on $C_V(x)$ are incompatible with (\ref{e:eige7}), so we may assume $p \geqs 37$ and thus 
$$V|_{X} = L_X(34) \oplus L_X(26) \oplus L_X(22) \oplus L_X(18) \oplus L_X(14) \oplus L_X(10) \oplus L_X(2)$$
in view of (\ref{e:5}). Let $W$ be the $L_X(2)$ summand and let $\{w_2,w_0,w_{-2}\}$ be a standard basis for $W$. In the usual manner, it is straightforward to show that $W$ is an appropriate $\mathfrak{sl}_2$-subalgebra and we can use Proposition \ref{p:new1} to show that (i) holds in the statement of the theorem. For example, if $p=37$ we get
\begin{eqnarray*}
w_2  \!\!\! & = & \!\!\!  a_1(e_1+e_2+e_3+e_4+e_5+e_6+e_7) \\
w_0 \!\!\! & = & \!\!\!  a_2(h_1+33h_2+15h_3+5h_4+12h_5+32h_6+28h_7)  \\
w_{-2} \!\!\! & = & \!\!\! a_3(4f_1+21f_2+23f_3+20f_4+11f_5+17f_6+f_7) + a_4e_{2 2 3 4 3 2 1}. 
\end{eqnarray*}
If we set $a_1=1$, then by considering the action of $x$ on $W$, we deduce that $a_2=20$ and $a_3=10$. Furthermore, the relation in (\ref{e:wm}) implies that $a_4=0$ and we deduce that $(w_2, -2w_0, -w_{-2})$ is an $\mathfrak{sl}_2$-triple over $\Z$ (see the proof of \cite[Proposition 2.4]{T}). Now apply Proposition \ref{p:new1}.

\vs

\noindent \emph{Case 2. ${\rm rad}(V|_{X}) \ne 0$, $p \geqs 29$} 

\vs

If $p \geqs 37$ then a combination of Lemma \ref{l:jf} and Corollary \ref{c:bound} implies that $x$ has a Jordan block of size $n \geqs 36$ on $V$, but this contradicts (\ref{e:5}). 

Next assume $p=31$. In the usual way, by applying Lemma \ref{l:jf} and Theorem \ref{t:structure}, and by appealing to the self-duality of $V|_{X}$, we can reduce to the case where each indecomposable summand of $V|_{X}$ is either simple or projective. By considering the eigenvalues in (\ref{e:eige7}), it follows that 
$$V|_{X} = W(i) \oplus L_X(22) \oplus L_X(18) \oplus L_X(14) \oplus L_X(10) \oplus L_X(2)$$
with $i \in \{4,26\}$. If $i=4$ then an involution $x_2 \in X$ has trace $-3$ on $V$, which contradicts Proposition \ref{p:trace}. Therefore $i=26$ and it is entirely straightforward to show that the $L_X(2)$ summand of $V|_{X}$ is an appropriate 
$\mathfrak{sl}_2$-subalgebra. The result follows via Proposition \ref{p:new1} in the usual fashion.

A similar argument applies when $p=29$. If $V|_{X}$ has a reducible non-projective summand then the self-duality of $V|_{X}$ implies that
$$V|_{X} = M_1 \oplus M_2 \oplus L_X(18) \oplus L_X(14) \oplus L_X(10) \oplus L_X(2)$$
is the only possibility, where 
$${\rm soc}(M_1) \cong M_1/{\rm soc}(M_1) = L_X(12) \oplus L_X(14)$$
and $M_2 \in \{L_X(28),U\}$. However, this implies that $x_2$ has trace $-3$ on $V$, which is a contradiction. Therefore, the indecomposable summands of $V|_{X}$ are simple or projective, and by considering the eigenvalues in (\ref{e:eige7}) we deduce that
$$V|_{X} = W(i) \oplus L_X(26) \oplus L_X(18) \oplus L_X(14) \oplus L_X(10) \oplus L_X(2)$$
with $i \in \{6,22\}$. We can rule out $i=6$ by computing the trace of $x_3$, so $i=22$ and we complete the argument as in the previous case. 

\vs

\noindent \emph{Case 3. ${\rm rad}(V|_{X}) \ne 0$, $p=23$} 

\vs

As before, it is not difficult to reduce to the case where each indecomposable summand of $V|_{X}$ is either simple or projective. By considering the eigenvalues in (\ref{e:eige7}) we deduce that 
$$V|_{X} = W(i) \oplus W(j) \oplus M_1 \oplus L_X(14) \oplus L_X(2)$$
where $i \in \{4,18\}$, $j \in \{10,12\}$ and $M_1 \in \{L_X(22), U\}$. By computing the trace of $x_2$ we see that $(i,j) = (4,12)$ or $(18,10)$, and we can rule out the first possibility by considering the trace of $x_3$. This calculation with $x_3$ also implies that $M_1 = L_X(22)$, so
$$V|_{X} = W(18) \oplus W(10) \oplus L_X(22) \oplus L_X(14) \oplus L_X(2).$$ 

Let $W$ be the $L_X(2)$ summand and fix a standard basis $\{w_2,w_0,w_{-2}\}$. By considering the spaces 
$$\ker(x-1) \cap E_2, \; \ker((x-1)^2) \cap E_0,\; \ker((x-1)^3) \cap E_{-2},$$
we deduce that
\begin{eqnarray*}
w_2  \!\!\! & = & \!\!\! a_1(e_1+e_2+e_3+e_4+e_5+e_6+e_7) \\
w_0 \!\!\! & = & \!\!\! a_2(h_1+17h_2+6h_3+15h_4+11h_5+11h_6+15h_7)  \\
& & \!\!\! + a_3(e_{1223210}+e_{1123211}-e_{1122221}) \\
w_{-2} \!\!\! & = & \!\!\! a_4(20f_1+18f_2+5f_3+f_4+13f_5+13f_6+f_7) \\
& & \!\!\! + a_5(e_{1123210}+2e_{1122211}+20e_{1112221}).
\end{eqnarray*}
Setting $a_1=1$ and using the action of $x$ on $W$, we deduce that $a_2 = 6$, $a_4=19$ and $a_5=2a_3$, so we have
\begin{eqnarray*}
w_2  \!\!\! & = & \!\!\! e_1+e_2+e_3+e_4+e_5+e_6+e_7 \\
w_0 \!\!\! & = & \!\!\! 6(h_1+17h_2+6h_3+15h_4+11h_5+11h_6+15h_7)  \\
& & \!\!\! + \gamma(e_{1223210}+e_{1123211}-e_{1122221}) \\
w_{-2} \!\!\! & = & \!\!\! 19(20f_1+18f_2+5f_3+f_4+13f_5+13f_6+f_7) \\
& & \!\!\! + 2\gamma(e_{1123210}+2e_{1122211}+20e_{1112221})
\end{eqnarray*}
for some $\gamma \in K$. One can check that the relations in (\ref{e:br}) are satisfied, so $W$ is an $\mathfrak{sl}_2$-subalgebra of $V$. Set
$$e = w_2, \; h = -2(6(h_1+17h_2+6h_3+15h_4+11h_5+11h_6+15h_7)),$$
\begin{eqnarray*}
f \!\!\! & = & \!\!\! -19(20f_1+18f_2+5f_3+f_4+13f_5+13f_6+f_7) \\
& = & \!\!\! 11f_1+3f_2+20f_3+4f_4+6f_5+6f_6+4f_7
\end{eqnarray*}
and
$$y = e_{1223210}+e_{1123211}-e_{1122221},\;\; z = e_{1123210}+2e_{1122211}+20e_{1112221}.$$
Then $(e,h,f)$ is an $\mathfrak{sl}_2$-triple over $\Z$ and we can use Proposition \ref{p:new2} to deduce that $X$ is contained in an $A_1$-type subgroup of $G$.

\vs

\noindent \emph{Case 4. ${\rm rad}(V|_{X}) \ne 0$, $p=19$} 

\vs

Finally, let us assume $p=19$ so $V|_{X}$ is projective and each indecomposable summand is also projective. By considering the eigenvalues in (\ref{e:eige7}), it follows that 
$$V|_{X} = W(i) \oplus W(j) \oplus W(k) \oplus M_1$$
where $i \in \{2,16\}$, $j \in \{4,14\}$, $k \in \{8,10\}$ and $M_1 \in \{L_X(18),U\}$. By computing the trace of $x_2$ we see that $(i,j,k,M_1)$ is one of the following:
$$(2,14,10,L_X(18)), (16,4,8,L_X(18)), (16,4,10,U), (16,14,8,U), (2,4,8,U).$$
In all of these cases, $x_3$ has trace $2$ on $V$, which is compatible with Proposition \ref{p:trace}. If $V|_{X} = W(16) \oplus W(4) \oplus W(8) \oplus L_X(18)$ then there is an element $y \in X$ of order $5$ with eigenvalues $[I_{25},\omega I_{27}, 
\omega^2 I_{27},\omega^3 I_{27},\omega^4 I_{27}]$ on $V$, but one checks that there are no elements in $G$ that act on $V$ in this way (for example, see \cite[Table 6]{CG}), so this possibility is ruled out. 

If $V|_{X}$ is one of 
$$W(8) \oplus W(4) \oplus W(2) \oplus U,\;\; W(16) \oplus W(10) \oplus W(4) \oplus U,$$
$$W(16) \oplus W(14) \oplus W(8) \oplus U,$$
then $X$ stabilizes the $1$-dimensional subalgebra of $V$ spanned by the vector 
$$w=e_{1122111}-e_{1112211}+e_{0112221}.$$ 
Indeed, $X$ stabilizes ${\rm soc}(U) = L_X(0)$, which is spanned by a vector in $C_V(x) \cap E_0$. But one checks that $C_V(x) \cap E_0 = \la w \ra$ so we are in case (ii) in the statement of the theorem.

Finally, suppose $V|_{X} = W(2) \oplus W(14) \oplus W(10) \oplus L_X(18)$, which is 
compatible with the containment of $X$ in an $A_1$-type subgroup of $G$ (see Table \ref{t:dec}). Let $W$ be the $L_X(2)$ summand in the socle of $V|_{X}$ and let $\{w_2,w_0,w_{-2}\}$ be a standard basis. In the usual way we obtain 
\begin{eqnarray*}
w_2 \!\!\! & = & \!\!\! a_1(e_1+e_2+e_3+e_4+e_5+e_6+e_7) \\
w_0  \!\!\! & = & \!\!\! a_2(h_1+2h_2+12h_3+14h_4+5h_5+6h_6+17h_7) \\
& & \!\!\! + a_3(e_{1122111}-e_{1112211}+e_{0112221}) \\
w_{-2}  \!\!\! & = & \!\!\! a_4(9f_1+18f_2+13f_3+12f_4+7f_5+16f_6+f_7) \\
& & \!\!\! + a_5(e_{1122110}-e_{1112210}+13e_{1112111}+12e_{0112211}) +a_6(e_{2234321})
\end{eqnarray*}
and we may assume $a_1=1$. By considering the action of $x$ on $W$, together with the condition in (\ref{e:wm}), we deduce that $a_2=2$, $a_4=11$, $a_5=16a_3$ and $a_6 = 13a_3^2$, so we have 
\begin{eqnarray*}
w_2  \!\!\! & = & \!\!\! e_1+e_2+e_3+e_4+e_5+e_6+e_7 \\
w_0 \!\!\! & = & \!\!\! 2(h_1+2h_2+12h_3+14h_4+5h_5+6h_6+17h_7) \\
 & & \!\!\! + \gamma(e_{1122111}-e_{1112211}+e_{0112221}) \\
w_{-2}  \!\!\! & = & \!\!\! 11(9f_1+18f_2+13f_3+12f_4+7f_5+16f_6+f_7) \\
& & \!\!\! + 16\gamma(e_{1122110}-e_{1112210}+13e_{1112111}+12e_{0112211}) +13\gamma^2(e_{2234321})
\end{eqnarray*}
for some $\gamma \in K$. Set
$$e = w_2, \; h = -2(2(h_1+2h_2+12h_3+14h_4+5h_5+6h_6+17h_7)),$$
\begin{eqnarray*}
f \!\!\! & = & \!\!\! -11(9f_1+18f_2+13f_3+12f_4+7f_5+16f_6+f_7) \\
& = & \!\!\! 15f_1+11f_2+9f_3+f_4+18f_5+14f_6+8f_7
\end{eqnarray*}
and
\begin{eqnarray*}
y \!\!\! & = & \!\!\! e_{1122111}-e_{1112211}+e_{0112221} \\
z_1 \!\!\! & = & \!\!\! 8(e_{1122110}-e_{1112210}+13e_{1112111}+12e_{0112211}) \\
z_2 \!\!\! & = & \!\!\! 11e_{2234321}
\end{eqnarray*}
Then $(e,h,f)$ is an $\mathfrak{sl}_2$-triple over $\Z$, but we cannot directly apply Proposition \ref{p:new2}. However, a minor modification of the argument in the 
proof of that proposition will work. 

First observe that $y\in ({\mathcal L}_\Z)_{p-1}\cap C_{{\mathcal L}_\Z}(e)$,
$z_1 \in ({\mathcal L}_\Z)_{p-3}$ and $z_2\in ({\mathcal L}_\Z)_{2p-4}$ (in terms of the notation used in the proof of Proposition \ref{p:new2}). Setting $\delta = -2\gamma$, we see that
$$(w_2, -2w_0,-w_{-2}) = ({\bar e}, {\bar h} + \delta {\bar y}, {\bar f}+\delta {\bar z}_1 + \delta^2 {\bar z}_2)$$
is an $\mathfrak{sl}_2$-triple in $\mathcal{L}_K$ for all choices of $\gamma \in K$.
Put $g ={\rm exp}({\rm ad}(\delta y)) \in G$ and note that 
$$g \cdot {\bar e} = \bar{e}, \;\; g \cdot \bar{h} = \bar{h}+\delta [{\bar y},{\bar h}] =\bar {h}+\delta \bar{y},\;\; 
g\cdot \bar{f} = \bar{f}+\delta [{\bar y},{\bar f}] + \frac{1}{2}\delta^2[{\bar y},[{\bar y},{\bar f}]]$$ 
(for the final equality, note that all higher degree terms are zero since the maximum $T$-weight on $\mathcal{L}_{\Z}$ is $2{\rm ht}(\alpha_0) \leqs 2(p-1)$).  
Now calculating (in ${\mathcal L}_\Z$), we have 
$$[h+y,f+z_1+z_2] = -2f+(p-3)z_1+(2p-4)z_2+[y,f]+[y,z_1]$$ 
and passing to ${\mathcal L}_K$, setting $\gamma = -\frac{1}{2}$, we deduce that  
$$-2\bar{f}-3\bar{z}_1-4\bar{z}_2+[{\bar y},{\bar f}]+[{\bar y},{\bar z}_1] = -2({\bar f}+{\bar z}_1+{\bar z}_2).$$
Therefore $[{\bar y},{\bar f}]+[{\bar y},{\bar z}_1] = {\bar z}_1+2{\bar z}_2$ and by comparing $T$-weights we deduce that
$[{\bar y},{\bar f}] = {\bar z}_1$ and $[{\bar y},{\bar z}_1] = 2{\bar z}_2$.
Finally, this implies that 
$$g \cdot \bar{f} = \bar{f}+\delta {\bar z}_1+\frac{1}{2}\delta^2[{\bar y},{\bar z}_1] = \bar{f}+\delta \bar{z}_1+\delta^2 \bar{z}_2$$
and we can now conclude as in the proof of Proposition \ref{p:new2}. In particular, $X$ is contained in an $A_1$-type subgroup of $G$.  

\vs

This completes the proof of Theorem \ref{t:e7}.
 \end{proof}

\section{A reduction for $G=E_8$}\label{s:e8}

In this section we complete the proof of the Reduction Theorem (see Theorem \ref{t:red}). Our main result is the following:

\begin{thm}\label{t:e8}
Let $G$ be a simple algebraic group of type $E_8$ over an algebraically closed field of characteristic $p >0$. Let $X = {\rm PSL}_{2}(p)$ be a subgroup of $G$ containing a regular unipotent element $x$ of $G$ and set $V = {\rm Lie}(G)$. Then one of the following holds:
\begin{itemize}\addtolength{\itemsep}{0.2\baselineskip}
\item[{\rm (i)}] $X$ is contained in an $A_1$-type subgroup of $G$; 
\item[{\rm (ii)}] $p=37$, $V|_{X} = W(34) \oplus W(26) \oplus W(14) \oplus L_X(22) \oplus L_X(2)$ and $X$ stabilizes a non-zero subalgebra of $\la e_{\a} \mid \a \in \Phi^{+}(G)\ra$.
\end{itemize}
\end{thm}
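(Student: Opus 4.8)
The plan is to follow the same elimination-and-extension strategy used in the proofs of Theorems \ref{t:g2}--\ref{t:e7}. Since the Coxeter number of $E_8$ is $30$ and $p$ is prime, the hypothesis $p \geqs h$ forces $p \geqs 31$, and by Table \ref{tab:di} the eigenvalues of $s$ on $C_V(x)$ are
$$\{\xi^{58},\xi^{46},\xi^{38},\xi^{34},\xi^{26},\xi^{22},\xi^{14},\xi^2\}.$$
First I would record the Jordan form of $x$ on $V$ for each relevant prime from \cite[Table 9]{Lawther}; for $p \geqs 59$ this is a sum of eight odd-size blocks, while as $p$ decreases the largest blocks merge into pairs of $J_p$'s. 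The case $p=31$ is already settled in Examples \ref{e:e8} and \ref{e:e9}, so the remaining work concerns $p \in \{37,41,43,47,53\}$ and $p \geqs 59$.

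In the semisimple case, the incompatibility of the eigenvalues above with the decomposition forced by the Jordan form rules out every prime below $59$, exactly as in the earlier sections. For $p \geqs 59$ one obtains the decomposition in Table \ref{t:dec}, and I would extract a standard basis $\{w_2,w_0,w_{-2}\}$ of the $L_X(2)$ summand $W$, using {\sc Magma} to compute bases for $\ker(x-1)\cap E_2$, $\ker((x-1)^2)\cap E_0$ and $\ker((x-1)^3)\cap E_{-2}$ with respect to the Chevalley basis. Imposing the action of $x$ on $W$ in (\ref{e:xmat}) together with the relation (\ref{e:wm}) pins down the coefficients, and one checks that $W$ is an $\mathfrak{sl}_2$-subalgebra; Proposition \ref{p:new1} (or Proposition \ref{p:new2} when stray positive-root terms survive) then yields containment in an $A_1$-type subgroup.

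For the reducible non-semisimple primes I would combine Lemma \ref{l:jf} and Theorem \ref{t:structure} with the self-duality of $V|_X$ to show that every indecomposable summand is either simple or projective, and then use the eigenvalue list, the traces of elements of order $2$ and $3$ (Lemma \ref{l:3} and Proposition \ref{p:trace}), and --- where these do not suffice --- the eigenvalue multiplicities of elements of order $19$ from Remark \ref{r:AJL} to cut the surviving possibilities down to those compatible with Table \ref{t:dec}. For $p\in\{41,43,47,53\}$ the resulting decomposition agrees with Table \ref{t:dec} and the extension step succeeds as above. The genuinely delicate case is $p=37$, where $V|_X = W(34)\oplus W(26)\oplus W(14)\oplus L_X(22)\oplus L_X(2)$: here the coefficient analysis of the $L_X(2)$ summand bifurcates. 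One branch produces a bona fide $\mathfrak{sl}_2$-triple, placing $X$ in an $A_1$-type subgroup (case (i)); the other branch forces the leading coefficient to vanish, leaving a non-zero $X$-invariant subalgebra inside $\la e_{\a} \mid \a \in \Phi^+(G)\ra$, which is exactly case (ii).

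The main obstacle is the sheer size of the computations in rank $8$: the relevant kernels are several-dimensional, so the systems of equations in the coefficients $a_i$ are large, and identifying the $p=37$ bifurcation cleanly --- isolating precisely which solutions give an $\mathfrak{sl}_2$-subalgebra and which give a subalgebra of $\la e_{\a}\ra$ --- requires care to ensure that no compatible decomposition has been overlooked before the parabolic analysis of Section \ref{s:final} takes over.
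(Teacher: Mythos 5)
Your proposal follows essentially the same route as the paper's proof: eliminate decompositions using the Jordan form, the eigenvalues of $s$ on $C_V(x)$, self-duality, traces of elements of order $2$ and $3$, and (for $p=37$) the eigenvalue multiplicities of order-$19$ elements, then carry out the extension step via a standard basis of the $L_X(2)$ summand and Propositions \ref{p:new1}/\ref{p:new2}, with the $p=37$ coefficient analysis bifurcating into cases (i) and (ii) exactly as in the paper. The plan is correct and correctly identifies where the genuine difficulty lies.
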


\begin{proof}
First note that $p \geqs 31$. In fact, we may assume $p \geqs 37$ since the case $p=31$ was handled in Section \ref{s:prel} (see Examples \ref{e:e8} and \ref{e:e9}). Recall that 
\begin{equation}\label{e:eige8}
\{\xi^{58},\xi^{46},\xi^{38},\xi^{34},\xi^{26},\xi^{22},\xi^{14},\xi^2\}
\end{equation} 
is the collection of eigenvalues of $s$ on $C_V(x)$ and note that $V|_{X}$ is self-dual.
The Jordan form of $x$ on $V$ is as follows:
\begin{equation}\label{e:6}
\left\{\begin{array}{ll}
\mbox{$[J_{59},J_{47},J_{39},J_{35},J_{27},J_{23},J_{15},J_3]$} & p \geqs 59 \\
\mbox{$[J_{53}^2,J_{39},J_{35},J_{27},J_{23},J_{15},J_3]$} & p = 53 \\
\mbox{$[J_{47}^3,J_{39},J_{27},J_{23},J_{15},J_3]$} & p = 47 \\
\mbox{$[J_{43}^4,J_{35},J_{23},J_{15},J_{3}]$} & p=43 \\
\mbox{$[J_{41}^4,J_{39},J_{27},J_{15},J_{3}]$} & p=41 \\
\mbox{$[J_{37}^6,J_{23},J_{3}]$} & p=37 \\
\mbox{$[J_{31}^8]$} & p=31 
\end{array}\right.
\end{equation}
(see \cite[Table 9]{Lawther}). 

\vs

\noindent \emph{Case 1. $V|_{X}$ is semisimple} 

\vs

By considering the eigenvalues in (\ref{e:eige8}) we deduce that $p \geqs 59$ and 
$$V|_{X} = L_X(58) \oplus L_X(46) \oplus L_X(38) \oplus L_X(34) \oplus L_X(26) \oplus L_X(22) \oplus L_X(14) \oplus L_X(2).$$
Let $W$ be the $L_X(2)$ summand and let $\{w_2,w_0,w_{-2}\}$ be a standard basis. If $p \geqs 61$ then it is straightforward to show that $W$ is an appropriate 
$\mathfrak{sl}_2$-subalgebra and the result follows by applying Proposition \ref{p:new1} (note that if $p=61$ then $\ker((x-1)^3) \cap E_{-2}$ is $2$-dimensional, but this does not cause any special difficulties). Now assume $p=59$. Here $\ker((x-1)^2) \cap E_{0}$ and $\ker((x-1)^3) \cap E_{-2}$ are both $2$-dimensional and we get
\begin{eqnarray*}
w_2 \!\!\! & = & \!\!\! a_1(e_1+e_2+e_3+e_4+e_5+e_6+e_7+e_8) \\
w_0 \!\!\! & = & \!\!\! a_2(h_1+22h_2+52h_3+35h_4+46h_5+48h_6+41h_7+25h_8) \\
& & \!\!\! + a_3(e_{23465432}) \\
w_{-2} \!\!\! & = & \!\!\! a_4(26f_1+41f_2+54f_3+25f_4+16f_5+9f_6+4f_7+f_8) + a_5(e_{23465431}).
\end{eqnarray*}
Set $a_1=1$ and consider the action of $x$ on $W$ (see (\ref{e:xmat})). We  deduce that $a_2 = 13$, $a_4=1$ and $a_5 = 2a_3$, so 
\begin{eqnarray*}
w_2 \!\!\! & = & \!\!\! e_1+e_2+e_3+e_4+e_5+e_6+e_7+e_8 \\
w_0 \!\!\! & = & \!\!\! 13(h_1+22h_2+52h_3+35h_4+46h_5+48h_6+41h_7+25h_8) \\
& & \!\!\! + \gamma(e_{23465432}) \\
w_{-2} \!\!\! & = & \!\!\! 26f_1+41f_2+54f_3+25f_4+16f_5+9f_6+4f_7+f_8 + 2\gamma(e_{23465431})
\end{eqnarray*}
for some $\gamma \in K$. Set 
$$e = w_2, \; h = -2(13(h_1+22h_2+52h_3+35h_4+46h_5+48h_6+41h_7+25h_8))$$
and 
\begin{eqnarray*}
f \!\!\! & = & \!\!\! -(26f_1+41f_2+54f_3+25f_4+16f_5+9f_6+4f_7+f_8) \\
& = & \!\!\! 33f_1+18f_2+5f_3+34f_4+43f_5+50f_6+55f_7+58f_8.
\end{eqnarray*}
Then $(e,h,f)$ is an $\mathfrak{sl}_2$-triple over $\Z$ (see the proof of \cite[Proposition 2.4]{T}) and by applying Proposition \ref{p:new2} (with $y = e_{23465432}$ and $z=e_{23465431}$) we deduce that $X$ is contained in an $A_1$-type subgroup of $G$.

\vs

\noindent \emph{Case 2. ${\rm rad}(V|_{X}) \ne 0$, $p \geqs 53$} 

\vs

If $p \geqs 61$ then the dimension of each indecomposable summand of $V|_{X}$ is at least $60$, which implies that the Jordan form of $x$ has a block of size $n \geqs 60$. This is a contradiction. 

Next assume $p=59$. Suppose $W$ is a reducible indecomposable non-projective summand of $V|_{X}$, so $\dim W \geqs 58$ (see Corollary \ref{c:bound}). In view of (\ref{e:6}) and Lemma \ref{l:jf}, we deduce that $x$ has Jordan form $[J_{59},J_i]$ on $W$ for some odd integer $i$ between $3$ and $47$. But this implies that $\dim W$  is even, so Corollary \ref{c:2step} implies that $W$ has at least four composition factors and thus $i \geqs 57$ (again, by Corollary \ref{c:bound}). This is a contradiction. Therefore, we may assume that each indecomposable summand of $V|_{X}$ is either simple or projective. Clearly,
$$V|_{X} = U \oplus L_X(46) \oplus L_X(38) \oplus L_X(34) \oplus L_X(26) \oplus L_X(22) \oplus L_X(14) \oplus L_X(2)$$
is the only possibility. However, this implies that $x_2$ has trace $-4$ on $V$, which contradicts Proposition \ref{p:trace}.

Now assume $p=53$. As in the previous case, by applying Lemma \ref{l:jf} and Theorem \ref{t:structure}, and by appealing to the self-duality of $V|_{X}$, it is straightforward to reduce to the case where the indecomposable summands of $V|_{X}$ are either simple or projective. Moreover, by considering the eigenvalues in (\ref{e:eige8}), we deduce that 
$$V|_{X} = W(i) \oplus L_X(38) \oplus L_X(34) \oplus L_X(26) \oplus L_X(22) \oplus L_X(14) \oplus L_X(2)$$
with $i \in \{6, 46\}$. By computing the trace of $x_3 \in X$, it follows that $i=46$. It is now entirely straightforward to show that the $L_X(2)$ summand of $V|_{X}$ is an appropriate $\mathfrak{sl}_2$-subalgebra and the result follows via Proposition \ref{p:new1}.

\vs

\noindent \emph{Case 3. ${\rm rad}(V|_{X}) \ne 0$, $p = 47$} 

\vs

As in the previous case, we can quickly reduce to the situation where each indecomposable summand of $V|_{X}$ is simple or projective, in which case 
$$V|_{X} = W(i) \oplus M_2 \oplus L_X(38) \oplus L_X(26) \oplus L_X(22) \oplus L_X(14) \oplus L_X(2)$$
with $i \in \{12,34\}$ and $M_1 \in \{L_X(46),U\}$. By computing the trace of $x_2$ we deduce that $i=34$ and $M_1 = L_X(46)$, in which case $V|_{X}$ is compatible with the containment of $X$ in an $A_1$-type subgroup of $G$ (see Table \ref{t:dec}). 

As usual, let $W$ be the $L_X(2)$ summand of $V|_{X}$ and let $\{w_2,w_0,w_{-2}\}$ be a standard basis for $W$. We get
\begin{eqnarray*}
w_2  \!\!\! & = & \!\!\! a_1(e_1+e_2+e_3+e_4+e_5+e_6+e_7+e_8) \\
w_0 \!\!\! &  = & \!\!\! a_2(h_1+26h_2+3h_3+6h_4+31h_5+10h_6+37h_7+18h_8) \\
& & \!\!\! + a_3(e_{23354321}- e_{22454321}) \\
w_{-2} \!\!\! & = & \!\!\! a_4(34f_1+38f_2+8f_3+16f_4+20f_5+11f_6+36f_7+f_8) \\
& & \!\!\! + a_5(e_{22354321} - 2e_{13354321}).
\end{eqnarray*}
We may set $a_1=1$. By considering the action of $x$ on this basis we deduce that $a_2 = 1$, $a_4=36$ and $a_5=45a_3$, so 
\begin{eqnarray*}
w_2  \!\!\! & = & \!\!\! e_1+e_2+e_3+e_4+e_5+e_6+e_7+e_8 \\
w_0 \!\!\! & = & \!\!\! h_1+26h_2+3h_3+6h_4+31h_5+10h_6+37h_7+18h_8 \\
& & \!\!\! + \gamma(e_{23354321}- e_{22454321}) \\
w_{-2} \!\!\! & = & \!\!\! 36(34f_1+38f_2+8f_3+16f_4+20f_5+11f_6+36f_7+f_8) \\
& & \!\!\! +  45\gamma(e_{22354321} - 2e_{13354321})
\end{eqnarray*}
for some $\gamma \in K$. One now checks that the relations in (\ref{e:br}) are satisfied and thus $W$ is an $\mathfrak{sl}_2$-subalgebra of $V$. Set
$$e = w_2, \; h = -2(h_1+26h_2+3h_3+6h_4+31h_5+10h_6+37h_7+18h_8)$$
and
\begin{eqnarray*}
f \!\!\! & = & \!\!\! - 36(34f_1+38f_2+8f_3+16f_4+20f_5+11f_6+36f_7+f_8) \\
& = & \!\!\! 45f_1+42f_2+41f_3+35f_4+32f_5+27f_6+20f_7+11f_8
\end{eqnarray*}
Then $(e,h,f)$ is an $\mathfrak{sl}_2$-triple over $\Z$ (see the proof of \cite[Proposition 2.4]{T}) and we can use  Proposition \ref{p:new2} to conclude that $X$ is contained in an $A_1$-type subgroup of $G$.

\vs

\noindent \emph{Case 4. ${\rm rad}(V|_{X}) \ne 0$, $p = 43$} 

\vs

By arguing in the usual manner, it is straightforward to reduce to the case where each indecomposable summand of $V|_{X}$ is either simple or projective. By considering the eigenvalues in (\ref{e:eige8}), we deduce that 
$$V|_{X} = W(i) \oplus W(j) \oplus L_X(34) \oplus L_X(22) \oplus L_X(14) \oplus L_X(2)$$
with $i \in \{4,38\}$ and $j \in \{16,26\}$. By computing the trace of $x_2$, we see that $(i,j) = (38,26)$ is the only option, in which case $V|_{X}$ is compatible with the desired containment of $X$ in an $A_1$-type subgroup of $G$. As usual, we now construct the summand $W=L_X(2)$ of $V|_{X}$ in terms of a standard basis $\{w_2,w_0,w_{-2}\}$; it is easy to show that $W$ is an appropriate $\mathfrak{sl}_2$-subalgebra and we can conclude by applying Proposition \ref{p:new1}.

\vs

\noindent \emph{Case 5. ${\rm rad}(V|_{X}) \ne 0$, $p = 41$} 

\vs

First assume that $V|_{X}$ has a reducible indecomposable non-projective summand $W$. In the usual way, we deduce that the Jordan form of $x$ on $W$ is one of the following:
$$\left\{\begin{array}{l}
\mbox{$[J_{41}^4,J_{27}]$}, [J_{41}^4,J_{15}] \\
\mbox{$[J_{41}^3,J_{3}]$} \\
\mbox{$[J_{41}^2,J_{27}]$}, [J_{41}^2,J_{15}], [J_{41}^2,J_{3}] \\
\mbox{$[J_{41},J_{39}]$},
\end{array}\right.$$
If the Jordan form is either $[J_{41}^4,J_{27}]$ or $[J_{41}^4,J_{15}]$ then there is a unique such summand. Moreover, $W$ has an odd number of composition factors and it is easy to see that this is incompatible with the self-duality of $V|_{X}$. Similar reasoning rules out the cases where $x$ has Jordan form $[J_{41}^2,J_i]$. Finally, suppose $x$ has Jordan form $[J_{41}^3, J_3]$ or $[J_{41},J_{39}]$. Here the self-duality of $V|_{X}$ implies that 
$${\rm soc}(W) \cong W/{\rm soc}(W) =L_X(22) \oplus L_X(20) \oplus L_X(18)$$
or
$${\rm soc}(W) \cong W/{\rm soc}(W) =L_X(20) \oplus L_X(18),$$
respectively. However, the existence of such a summand would mean that $\xi^{20}$ is an eigenvalue of $s$ on $C_V(x)$, which is not the case (see (\ref{e:eige8})). Therefore, we conclude that every indecomposable summand of $V|_{X}$ is either simple or projective. More precisely, in view of (\ref{e:eige8}), it follows that
$$V|_{X} = W(i) \oplus W(j) \oplus L_X(38) \oplus L_X(26) \oplus L_X(14) \oplus L_X(2)$$
with $i \in \{6,34\}$ and $j \in \{18,22\}$. 
By computing the trace of $x_3$ we deduce that $(i,j) = (34,22)$, in which case $V|_{X}$ is compatible with the containment of $X$ in an $A_1$-type subgroup. 

Let $W$ be the $L_X(2)$ summand of $V|_{X}$ and let $\{w_2,w_0,w_{-2}\}$ be a standard basis. In the usual manner we deduce that
\begin{eqnarray*}
w_2  \!\!\! & = & \!\!\! a_1(e_1+e_2+e_3+e_4+e_5+e_6+e_7+e_8) \\
w_0 \!\!\! & = & \!\!\! a_2(h_1+30h_2+10h_3+27h_4+22h_5+25h_6+36h_7+14h_8) \\
w_{-2} \!\!\! & = & \!\!\! a_3(3f_1+8f_2+30f_3+40f_4+25f_5+34f_6+26f_7+f_8) \\
& & \!\!\! + a_4(e_{22343221} - e_{12343321} + e_{12244321})
\end{eqnarray*}
Set $a_1=1$. By considering the action of $x$ on this basis we get $a_2 = 36$ and $a_3 = 24$. Finally, one can check that the condition in (\ref{e:wm}) implies that $a_4=0$ and now the desired result follows from Proposition \ref{p:new1}.

\vs

To complete the proof of the theorem, we may assume that $p=37$ (recall that the case $p=31$ was handled earlier in Examples \ref{e:e8} and \ref{e:e9}).

\vs

\noindent \emph{Case 6. ${\rm rad}(V|_{X}) \ne 0$, $p = 37$} 

\vs

As usual, let us first assume that $V|_{X}$ has a reducible indecomposable non-projective summand $W$. By applying Lemma \ref{l:jf} and Theorem \ref{t:structure}, we deduce that the Jordan form of $x$ on $W$ is one of the following:
$$[J_{37}^6,J_{23}], \; [J_{37}^4,J_{23}], \; [J_{37}^3,J_{3}], \; [J_{37}^2,J_{23}], \;  [J_{37}^2,J_{3}].$$
In fact, the self-duality of $V|_{X}$ implies that $[J_{37}^3,J_{3}]$ is the only possibility, with
$${\rm soc}(W) \cong W/{\rm soc}(W) = L_X(16) \oplus L_X(18) \oplus L_X(20).$$
But if this is a summand of $V|_{X}$ then $\xi^{20}$ is an eigenvalue of $s$ on $C_V(x)$, contradicting (\ref{e:eige8}). Therefore, we have reduced to the case where each indecomposable summand of $V|_{X}$ is simple or projective. Again, by considering (\ref{e:eige8}) we deduce that 
\begin{equation}\label{e:decf}
V|_{X} = W(i) \oplus W(j) \oplus W(k) \oplus L_X(22) \oplus L_X(2)
\end{equation}
with $i \in \{2,34\}$, $j \in \{10,26\}$ and $k \in \{14,22\}$. 

We claim that $(i,j,k) = (34,26,14)$, in which case the decomposition of $V|_{X}$ is compatible with the containment of $X$ in an $A_1$ subgroup of $G$. One can check that all of the eight  decompositions above are compatible with the trace of $x_2$ and $x_3$, so we consider the traces of elements of larger order. Let $y \in X$ be an element of order $19$. In each case it is straightforward to compute the eigenvalues of $y$ on $V$. Using Litterick's algorithm in \cite{AJL}, we can compute the eigenvalues on $V$ of every element in $G$ of order $19$ and in this way we deduce that $(i,j,k) = (34,26,14)$ as claimed.

Let $W$ be the $L_X(2)$ summand of $V|_{X}$ with standard basis $\{w_2,w_0,w_{-2}\}$. The spaces 
$$\ker(x-1) \cap E_2,\;\; \ker((x-1)^2) \cap E_0,\;\; \ker((x-1)^3) \cap E_{-2}$$
have respective dimensions $2$, $2$ and $3$, which gives
\begin{eqnarray*}
w_2  \!\!\! & = & \!\!\! a_1(e_1+e_2+e_3+e_4+e_5+e_6+e_7+e_8)  \\
& & \!\!\! + a_2(e_{22343221} - e_{12343321} + e_{12244321}) \\
w_0 \!\!\! & = & \!\!\! a_3(h_1+24h_2+6h_3+15h_4+4h_5+34h_6+31h_7+32h_8)  \\
& & \!\!\! +a_4(e_{22343211}+24e_{12343221}+25e_{12243321}) \\
w_{-2} \!\!\! & = & \!\!\! a_5(22f_1+10f_2+21f_3+34f_4+14f_5+8f_6+16f_7+f_8) \\
& & \!\!\! + a_6(e_{22343210} +13e_{12243221} - e_{12233321}) \\
& & \!\!\! + a_7(e_{12343211} + 23e_{12243221} + 2e_{12233321})
\end{eqnarray*}
By considering the action of $x$ on this basis, we deduce that $a_3 = 28a_1$, $a_4 =  3a_2$, $a_5=16a_1$ and $a_7 = 6a_2+a_6$. The condition in (\ref{e:wm}) yields the equations
\begin{eqnarray*}
16a_1^2a_2 + 3a_1^2a_6 \!\!\! & = & \!\!\! 0 \\
19a_1^2a_2 + 6a_1^2a_6 \!\!\! & = & \!\!\! 0
\end{eqnarray*}
If $a_1 \ne 0$ then these equations imply that $a_2 = a_6 = 0$, so we can set $a_1=1$ and then apply Proposition \ref{p:new1} to show that $X$ is contained in an $A_1$-type subgroup. On the other hand, if $a_1=0$ then we may assume $a_2=1$, so 
\begin{eqnarray*}
w_2  \!\!\! & = & \!\!\! e_{22343221} - e_{12343321} + e_{12244321} \\
w_0 \!\!\! & = & \!\!\! 3(e_{22343211}+24e_{12343221}+25e_{12243321}) \\
w_{-2} \!\!\! & = & \!\!\! a_6(e_{22343210} +13e_{12243221} - e_{12233321}) \\
& & \!\!\! + (6+a_6)(e_{12343211} + 23e_{12243221} + 2e_{12233321})
\end{eqnarray*}
It is straightforward to check that $W \subseteq \la e_{\a} \mid \a \in \Phi^+(G)\ra$ is a subalgebra and this puts us in case (ii) of the theorem.

\vs

This completes the proof of Theorem \ref{t:e8}.
 \end{proof}

\section{Proof of Theorem \ref{t:main}}\label{s:final}

In this final section we complete the proof of Theorem \ref{t:main}. In view of Theorem \ref{t:g2}, we may assume that $G$ is of type $F_4$, $E_6$, $E_7$ or $E_8$. Moreover, by our work in Sections \ref{s:f4}--\ref{s:e8}, it remains to handle the cases appearing in Table \ref{tab:red}. In each of these cases, $X$ stabilizes a non-zero subalgebra $W \subseteq \la e_{\a} \mid \a \in \Phi^{+}(G)\ra$ of $V = {\rm Lie}(G)$ and by applying Proposition \ref{l:bt} we can assume that $X$ is contained in a proper parabolic subgroup $P=QL$ of $G$ with unipotent radical $Q$ and Levi factor $L$. The following result, when combined with Theorem \ref{t:main0}, completes the proof of Theorem \ref{t:main}. (Recall that Craven \cite{Craven} has constructed a subgroup 
$X$ satisfying the conditions in parts (ii) and (iii) of Theorem \ref{t:main}, and he has established its uniqueness up to conjugacy; see Remark \ref{r:main}(b).)

\begin{thm}\label{t:par}
Let $G$ be a simple exceptional algebraic group of adjoint type over an algebraically closed field of characteristic $p>0$. Let $X = {\rm PSL}_{2}(p)$ be a subgroup of $G$ containing a regular unipotent element of $G$ and let $V = {\rm Lie}(G)$ be the adjoint module. If $X$ is contained in a proper parabolic subgroup $P=QL$ of $G$, then either
\begin{itemize}\addtolength{\itemsep}{0.2\baselineskip}
\item[{\rm (i)}] $G=E_6$, $p=13$, $L'=D_5$ and $V|_{X} = W(10)^2 \oplus W(4)$; or
\item[{\rm (ii)}] $G=E_7$, $p=19$, $L'=E_6$ and $V|_{X} = W(16) \oplus W(14) \oplus W(8) \oplus U$. 
\end{itemize}
\end{thm}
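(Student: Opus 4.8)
The plan is to carry out the parabolic analysis sketched as Step~3 in Section~\ref{ss:me}, testing it against the explicit list in Table~\ref{tab:red}. Since $X$ is contained in the proper parabolic $P$, Theorem~\ref{t:main0} shows that $X$ cannot lie in an $A_1$-type subgroup (that would make $X$ $G$-irreducible), so case~(i) of Theorem~\ref{t:red} is impossible and we must be in case~(ii). In particular $(G,p) \in \{(F_4,13),(E_6,13),(E_7,19),(E_8,37)\}$ and the multiset of $KX$-composition factors of $V$ is completely determined by the relevant entry of Table~\ref{tab:red}.

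I would then reduce to the Levi factor exactly as in the proof of Theorem~\ref{t:main0}. Taking $P=QL$ minimal subject to containing $X$ and letting $\pi\colon P\to P/Q\cong L$ be the quotient map, minimality forces $\pi(X)$ to be $L'$-irreducible, and $\pi(x)$ is a regular unipotent element of $L'$ by \cite[Lemma~2.6]{TZ}. Projecting onto the simple factors of $L'$ and invoking \cite{ST1} for the classical factors and Theorem~\ref{t:red} for any (necessarily lower-rank) exceptional factor, I would deduce that $\pi(X)$ lies in an $A_1$-type subgroup $H$ of $L'$ containing $\pi(x)$. Because $Q$ acts trivially on every $P$-composition factor of $V$, each such factor is an irreducible $KL'$-module and the $KX$-composition factors of $V$ must coincide with those of $V|_H$; the latter can be read off from \cite[Tables~1--5]{LT99} after setting every $q_i=1$.

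The core of the argument is the resulting comparison of composition factors. First I would use the coarse invariant that the number of trivial $KX$-composition factors of $V$ is at least $\dim Z(L)^{\circ}=r-{\rm rank}(L')$; reading the trivial multiplicities off Table~\ref{tab:red} bounds ${\rm rank}(L')$ from below and leaves only a short list of candidate Levi subgroups in each case. For each candidate I would compute the full multiset of $KX$-composition factors of $V|_H$ and compare it with the target entry of Table~\ref{tab:red}. This eliminates $(F_4,13)$ and $(E_8,37)$ outright --- for the latter the decomposition is the $A_1$-compatible one of Table~\ref{t:dec}, so the very same $L_X(6)$-obstruction used for $E_8$ in the proof of Theorem~\ref{t:main0} applies --- while for $E_6$ and $E_7$ it singles out a unique compatible pair. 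Concretely, for $G=E_6$ with $L'=D_5$ one has $V|_{D_5}={\rm Lie}(D_5)\oplus T_1\oplus\Delta\oplus\Delta^{*}$ with $\Delta$ a $16$-dimensional half-spin module; restricting to $H$ gives $\Delta|_X=L_X(10)\oplus L_X(4)$ and, using the reduction modulo $13$ of the Weyl module of highest weight $14$, ${\rm Lie}(D_5)|_X$ reproduces the remaining factors, so that $V|_X=W(10)^2\oplus W(4)$; one then checks that the other two $E_6$ decompositions, which differ in the multiplicity of $L_X(4)$ and of the trivial factor, match no Levi. The analogous computation for $G=E_7$ with $L'=E_6$, using $V_{27}|_X=L_X(16)\oplus L_X(8)\oplus L_X(0)$ together with the splitting modulo $19$ of the Weyl module of highest weight $22$, yields $V|_X=W(16)\oplus W(14)\oplus W(8)\oplus U$ and rules out the other two $E_7$ decompositions.

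The main obstacle will be this last modular bookkeeping. In each of the relevant characteristics several of the Weyl modules occurring in $V|_H$ have highest weight exceeding $p$ and are therefore reducible, so one cannot simply branch the principal $\mathfrak{sl}_2$ as in characteristic zero; one must resolve these modules into genuine $KX$-composition factors, for instance via Jantzen's sum formula for ${\rm SL}_2$ together with the Steinberg tensor product theorem and Clebsch--Gordan (note that although a factor such as $L(14)$ for $p=13$ is not a module for ${\rm PSL}_2$, its restriction $L_X(2)\oplus L_X(0)$ is). It is precisely the resulting delicate multiplicities --- such as the number of $L_X(4)$ factors distinguishing the three $E_6$ cases --- that force the two exceptions, and once these reductions are in hand the finite check over the candidate Levis is routine.
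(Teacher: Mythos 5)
Your proposal follows essentially the same route as the paper's proof: invoke Theorem \ref{t:main0} to land in case (ii) of Theorem \ref{t:red}, pass to a minimal parabolic and its Levi quotient to place $\pi(X)$ inside an $A_1$-type subgroup $H$ of $L'$, and then compare the $KX$-composition factors of $V|_H$ (read off from \cite[Tables 1--5]{LT99}, starting with the count of trivial factors) against the possibilities in Table \ref{tab:red}. Your explicit branching computations for $L'=D_5$ and $L'=E_6$ are consistent with, and slightly more detailed than, the paper's shorter composition-factor counts, so no gap.
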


\begin{proof}
We may assume that $P$ is minimal with respect to containing $X$. Let $\pi:P\to P/Q$ be the quotient map and identify $L$ with $P/Q$. By arguing as in the first paragraph in the proof of Theorem \ref{t:main0} (see the end of Section \ref{s:prel}), we deduce that $\pi(X)$ is contained in an $A_1$-type subgroup $H$ of $L'$. In addition, Theorem \ref{t:main0} implies that $X$ is not contained in an $A_1$-type subgroup of $G$, so  $(G,p,V|_{X})$ must be one of the cases in Table \ref{tab:red}. As noted in the proof of Theorem \ref{t:main0}, the composition factors of $V|_{H}$ can be read off from \cite[Tables 1--5]{LT99} and this imposes restrictions on the composition factors of $V|_{X}$. By considering each possibility for $(G,L')$ in turn,  comparing composition factors with Table \ref{tab:red}, we will show that the cases labelled (i) and (ii) in the statement of the theorem are the only compatible options.

First assume $(G,p) = (F_4,13)$, so the composition factors of $V|_{X}$ are $L_X(10)^3$, $L_X(8)$, $L_X(2)^3$ and $L_X(0)$. By inspecting \cite[Table 2]{LT99} it is easy to see that  there is no compatible Levi subgroup $L$. Similarly, if $(G,p) = (E_8,37)$ then the composition factors of $V|_{X}$ are given in (\ref{e:e8dee}) and thus we can eliminate this case by repeating the argument in the proof of Theorem \ref{t:main0}.

Next suppose $(G,p) = (E_6,13)$. The three possibilities for $V|_{X}$ (and their composition factors) are as follows:
\begin{eqnarray*}
W(10) \oplus W(8) \oplus W(2): & & \hspace{-5mm} L_X(10)^3, L_X(8)^3, L_X(4), L_X(2)^4, L_X(0) \\
W(10) \oplus W(4) \oplus W(2): & & \hspace{-5mm} L_X(10)^3, L_X(8)^2, L_X(6), L_X(4)^2, L_X(2)^3, L_X(0) \\
W(10) \oplus W(10) \oplus W(4): & & \hspace{-5mm} L_X(10)^4, L_X(8), L_X(6), L_X(4)^2, L_X(2)^2, L_X(0)^2 
\end{eqnarray*}
In all three cases, we see that $V|_{X}$ has at most two trivial composition factors, so \cite[Table 3]{LT99} implies that 
$$\mbox{$L' = A_2A_1^2, A_2^2A_1, A_4A_1$ or $D_5$.}$$
If $L'=A_2A_1^2$ then $V|_{X}$ has five or more $L_X(2)$ factors, which is incompatible with all three possibilities for $V|_{X}$. Similarly, if $L'=A_4A_1$ then there are too many $L_X(4)$ factors, and we can rule out $L'=A_2^2A_1$ because we would get $L_X(1)$ composition factors, which is absurd. Finally, suppose $L' = D_5$. Since the Weyl module $W_X(14)$ has an $L_X(10)$ composition factor, we see that $V|_{X}$ has four such factors and thus 
$$V|_{X} = W(10) \oplus W(10) \oplus W(4)$$ 
is the only option. 

Finally, let us assume $(G,p) = (E_7,19)$. The three possibilities for $V|_{X}$ are as follows:
\begin{eqnarray*}
W(8) \oplus W(4) \oplus W(2) \oplus U: && \hspace{-5mm} L_X(16)^2, L_X(14)^2, L_X(12), L_X(10), L_X(8)^3, 
L_X(4)^2, \\
& & \hspace{-5mm}  L_X(2)^2, L_X(0)^2 \\
W(16) \oplus W(10) \oplus W(4) \oplus U: & & \hspace{-5mm} L_X(16)^3, L_X(14), L_X(12), L_X(10)^2, L_X(8), L_X(6), \\
&& \hspace{-5mm} L_X(4)^2, L_X(2), L_X(0)^3 \\
W(16) \oplus W(14) \oplus W(8) \oplus U: && \hspace{-5mm} L_X(16)^3, L_X(14)^2, L_X(10), L_X(8)^3, L_X(4), L_X(2)^2, \\
& & \hspace{-5mm} L_X(0)^3 
\end{eqnarray*}
By inspecting \cite[Table 4]{LT99}, counting the number of trivial composition factors, we quickly reduce to a small number of possibilities for $L'$. By considering non-trivial composition factors, it is straightforward to reduce further to the case $L'=E_6$. For example, we can rule out $L' = A_6$ because there would be too many $L_X(4)$ factors. Similarly, $L' = D_5A_1$ is out because we would have an $L_X(6)$ and at least three $L_X(8)$ factors, which is not  compatible with any of the three possibilities above. We can rule out $L' = D_6$ because it would imply that $V|_{X}$ has an $L_X(5)$ factor. Finally, suppose $L' = E_6$. Here  $V|_{X}$ has at least three $L_X(16)$ and $L_X(8)$ composition factors, so 
$$V|_{X} = W(16)\oplus W(14) \oplus W(8) \oplus U$$ 
is the only possibility. 
 \end{proof}

\section*{Acknowledgements} 

Testerman was supported by the Fonds National Suisse de la Recherche Scientifique grants $200021\_146223$ and $200021\_153543$.  Burness thanks the Section de Math\'{e}matiques and the Centre Interfacultaire Bernoulli at EPFL for their generous hospitality. It is a pleasure to thank David Craven and Jacques Th\'{e}venaz for many useful discussions. We also thank Bob Guralnick, Gunter Malle and Iulian Simion for helpful comments on an earlier draft of this paper. Finally, we thank an anonymous referee for carefully reading the paper and for making several suggestions that have improved both the accuracy of the paper and the clarity of the exposition.

\end{document}